\documentclass[12pt]{amsart}
\usepackage{amssymb}
\usepackage{amsfonts}
\usepackage{times}
\usepackage{cancel}
\usepackage{bbding}
\usepackage{mathptmx}
\usepackage{amsmath}
\usepackage[usenames]{color}
\usepackage{mathrsfs}
\usepackage{stmaryrd}
\usepackage{amsfonts}
\usepackage{amscd}
\usepackage{cite}
\usepackage{cases}
\usepackage{amsthm}
\usepackage[all]{xy}           
\usepackage{bbm}
\usepackage{txfonts}
\usepackage{amscd}
\usepackage{tikz}
\usetikzlibrary{matrix}
\usepackage[shortlabels]{enumitem}

\usepackage{ifpdf}
\ifpdf
\usepackage[colorlinks,final,backref=page,hyperindex]{hyperref}
\else
\usepackage[colorlinks,final,backref=page,hyperindex]{hyperref}
\fi
\usepackage{tikz}
\usepackage[active]{srcltx}

\topmargin -.8cm \textheight 22.8cm \oddsidemargin 0cm \evensidemargin -0cm \textwidth 16.3cm

\makeatletter

\def\la{\lambda}

\def\p{\partial}
\def\vs{\vspace*}
\def\Z{\mathbb{Z}}

\def\A{\mathcal{A}}
\def\T{\mathfrak{T}}

\def\O{\mathcal{O}}
\def\B{\mathcal{B}}
\def\C{\mathbb{C}}

\def\vs{\vspace*}

\newcommand {\emptycomment}[1]{}

\numberwithin{equation}{section}
\newtheorem{theo}{Theorem}[section]
\newtheorem{defi}[theo]{Definition}
\newtheorem{coro}[theo]{Corollary}
\newtheorem{lemm}[theo]{Lemma}
\newtheorem{prop}[theo]{Proposition}

\newtheorem{ex}[theo]{Example}

\newtheorem{remark}[theo]{Remark}

\newcommand{\ad}{\mathrm{ad}}
\newcommand{\NS}{\mathrm{NS}}
\newcommand{\Nat}{\mathbb N}
\newcommand{\perm}{\mathbb S}
\newcommand{\Id}{\rm{Id}}
\newcommand{\g}{\mathfrak g}
\newcommand{\fad}{\mathfrak{ad}}

\newcommand{\NR}{\mathrm{NR}}

\newcommand{\dM}{\mathbf{d}}
\newcommand{\half}{\frac{1}{2}}
\allowdisplaybreaks[4]
\begin{document}

\title[]{Twisting theory,  relative Rota-Baxter type operators and $L_\infty$-algebras on Lie conformal algebras}

\author{Lamei Yuan}
\address{School of Mathematics, Harbin Institute of Technology, Harbin
	150001, China}
\email{lmyuan@hit.edu.cn}

\author{Jiefeng Liu$^{\ast}$}
\address{School of Mathematics and Statistics, Northeast Normal University, Changchun 130024, China}
\email{liujf534@nenu.edu.cn}
\thanks{$^{\ast}$ the corresponding author}
\begin{abstract}
Based on Nijenhuis-Richardson bracket and bidegree on the cohomology complex for a Lie conformal algebra, we develop a twisting theory of Lie conformal algebras. By using derived bracket constructions, we construct $L_\infty$-algebras from (quasi-)twilled Lie conformal algebras. And we show that the result of the twisting by a $\C[\partial]$-module homomorphism on a (quasi-)twilled Lie conformal algebra is also a (quasi-)twilled Lie conformal algebra if and only if the $\C[\partial]$-module homomorphism is a Maurer-Cartan element of the $L_\infty$-algebra. In particular, we show that relative Rota-Baxter type operators on Lie conformal algebras are Maurer-Cartan elements. Besides, we propose a new algebraic structure, called NS-Lie conformal algebras, that is closely  related to twisted relative Rota-Baxter operators and Nijenhuis operators on Lie conformal algebras. As an application of twisting theory, we give the cohomology of  twisted relative Rota-Baxter operators and study their deformations.
\end{abstract}

\subjclass[2010]{16D70, 17A30, 17B55}

\keywords{Lie conformal algebra, twisting, $L_\infty$-algebra, twisted relative Rota-Baxter operator, cohomology}

\maketitle

\tableofcontents

\allowdisplaybreaks
\section{Introduction}

The notion of a Lie conformal algebra was introduced by Kac in \cite{KAC} as an algebraic language to encode the properties of operator product expansions in conformal field theory, and, at the same time, of local Poisson brackets in the theory of integrable evolution equations \cite{BDK2}.
The general structure theory,
cohomology theory and representation theory for conformal algebras have been established and widely developed in the literatures (see, for example, \cite{BDK,BKV,DK3,DK}).

Rota-Baxter operators on associative algebras were initially introduced by Baxter \cite{B} in the study of the fluctuation theory in probability, and then popularized by Rota \cite{Ro}, Atkinson \cite{A} and Cartier \cite{C} during the process of finding their interrelations with combinatorics. Rota-Baxter algebras have broad connections with mathematical physics, including  the application in Connes-Kreimer's algebraic approach to the renormalization in perturbative quantum field theory \cite{CK},  noncommutative symmetric functions and Hopf algebras \cite{GJH,Yu-Guo},  splitting of operads \cite{BBGN,PBG}, quantum analogue of Poisson geometry \cite{U1} and double Poisson algebras \cite{ARR,G3,Goncharov}. We refer the reader to \cite{Gub} for more details about Rota-Baxter algebras.

In the Lie algebra context, a Rota-Baxter operator was introduced independently in the 1980s as the operator form of the classical Yang-Baxter equation, named after the physicists C.-N. Yang and R. Baxter \cite{Bax,Yang}, whereas the classical Yang-Baxter equation plays important roles in mathematics and mathematical physics such as integrable systems and quantum groups \cite{CP,STS}.  In order to gain better understanding of the interrelation between the classical Yang-Baxter equation and the related integrable systems, the more general notion of a relative Rota-Baxter operator (also called $\mathcal O$-operator) on Lie algebras was introduced by Kupershmidt \cite{Ku}.	Recently, Rota-Baxter operators are defined in the categories of Lie groups \cite{GLS} and cocommutative Hopf algebras \cite{Gon}. Besides, cohomology, deformations, extensions and homotopy theory of Rota-Baxter operators were well studied in \cite{JS,LST,TBGS}.

Motivated by the study of conformal analogue of Lie bialgebras, Liberati developed the theory of Lie conformal bialgebras in \cite{Li}. The notion of conformal classical Yang-Baxter equation was introduced to construct coboundary Lie conformal bialgebras. In order to study the operator forms of solutions of the conformal classical Yang-Baxter equation, the authors in \cite{HB1} introduced the notion of relative Rota-Baxter operators on Lie conformal algebras. See \cite{LZ,HB1,YUAN} for further details of relative Rota-Baxter operators on Lie and associative conformal algebras. The twisting theory was introduced by Drinfeld in \cite{D90} motivated by the study of quasi-Lie bialgebras and quasi-Hopf algebras. As a useful tool in the study of bialgebras, the twisting theory was  applied to  Poisson geometry and associative algebras  by Kosmann-Schwarzbach \cite{KS05}, Roytenberg \cite{Roy02A} and Uchino \cite{U2}.

In this paper, we develop the twisting theory of Lie conformal algebras and use this theory to study relative Rota-Baxter type operators. By the derived bracket constructions of Lie conformal algebras, we construct an  $L_\infty$-algebra  associated to a quasi-twilled Lie conformal algebra and show that Rota-Baxter type operators are  Maurer-Cartan elements. We introduce the notion of NS-Lie conformal algebras, which is  the underlying algebraic structure of the  twisted relative Rota-Baxter operators and Nijenhuis operators on a Lie conformal algebra. As an  application of twisting theory of Lie conformal algebras, we give the cohomology of twisted relative Rota-Baxter operators and study their deformations.

In Section 2, we recall the definitions of Lie conformal algebras and their modules.  We also review the cohomology theory of Lie conformal algebras and gather some facts.

In Section 3, we first recall Nijenhuis-Richardson bracket $[-,-]_\NR$ on the cohomology complex $C^*(\A,\A)$ for a Lie conformal algebra $\A$ and thus obtain a graded Lie algebra $(C^*(\A,\A),[-,-]_\NR)$. Next, assuming $\A=\A_1\oplus \A_2$ has a decomposition into a direct sum of two $\C[\partial]$-modules $\A_1$ and $\A_2$, we propose a bidegree on $C^*(\A_1\oplus \A_2,\A_1\oplus \A_2)$. We will see that the Lie conformal algebra structure $\Pi$ of $\A$ is decomposed into the unique four substructures
\begin{align}\label{intro}
\Pi=\hat{\phi}_1+\hat{\mu}_1+\hat{\mu}_2+\hat{\phi}_2.
\end{align}
When $\hat{\phi}_2=0,$ $\A_2$ is a subalgebra of $\A$. We call  such a triple $(\A,\A_1, \A_2)$ a quasi-twilled Lie conformal algebra. When $\hat{\phi}_1=\hat{\phi}_2=0,$ namely, both $\A_1$ and $\A_2$ are  subalgebras of $\A$, the triple $(\A,\A_1, \A_2)$ is called a twilled Lie conformal algebra and denoted by $\A_1\bowtie\A_2$. By the derived bracket construction in \cite{KS}, we show that the quasi-twilled Lie conformal algebra structure on $\A_1\oplus \A_2$ induces an $L_\infty$-algebra structure on $C^*(\A_2,\A_1)$
(see Theorem \ref{quasi-as-shLie}).

In Section 4, we define a twisting operation associated to a $\C[\partial]$-module homomorphism $H:\A_2\rightarrow\A_1$ as $\Pi^H:=e^{X_{\hat{H}}}(\Pi),$ where ${\hat{H}}$ is the lift of $H$. The twisting operation $\Pi^H$ is also decomposed into the unique four substructures corresponding to those of $\Pi$ in \eqref{intro}. We will present explicit formulas of the transformation rules (see Theorem \ref{thm:twist}).

In the twilled case, we show that $(\A_1\bowtie\A_2,\Pi^H)$ is again a twilled Lie conformal algebra if and only if $H$ is a Maurer-Cartan element of the differential graded Lie algebra $(C^*(\A_2,\A_1),d_{\hat{\mu}_2},[-,-]_{\hat{\mu}_1})$ (see Proposition \ref{twisting-twilled}), namely,
\begin{equation*}
d_{\hat{\mu}_2}\hat{H}+\half[\hat{H},\hat{H}]_{\hat{\mu}_1}=0.
\end{equation*}
Furthermore, we show that $T$ is a relative Rota-Baxter operator if and only if $\hat{T}$ is a  Maurer-Cartan element of a  graded Lie algebra (see Proposition \ref{pro:gLa-RB}). As an application, we obtain that $r=\sum_{i} a_i\otimes b_i\in \A\otimes \A$ is a skew-symmetric solution of the  conformal classical Yang-Baxter equation if and only if $r^{\sharp}_{0}=r_{\lambda}^{\sharp}|_{\lambda=0}$ is a  Maurer-Cartan  element of the graded Lie algebra $(C^*(\A^{*c},\A),[\cdot,\cdot]_{\hat{\mu}})$ (see Proposition \ref{44}).

In the quasi-twilled case, we introduce the notion of a twisted relative Rota-Baxter operator, which is a generalization of Rota-Baxter operator and characterized by a $2$-cocycle. We show that basic properties of a relative Rota-Baxter operator are satisfied for twisted one. Moreover, we introduce Nijenhuis operators and Reynolds operators to illustrate examples of twisted relative Rota-Baxter operators.

In Section 5, we construct a new algebraic structure, called NS-Lie conformal algebras. We show that NS-Lie conformal algebras connect closely with Lie conformal algebras, conformal NS-algebras, twisted relative Rota-Baxter operators and Nijenhuis operators. The corresponding results are stated in Theorems \ref{p5}--\ref{th7} and Proposition \ref{p6}.

In Section 6, we introduce cohomology of twisted relative Rota-Baxter operators
and study infinitesimal deformations of them from cohomological point of view.

Throughout this paper, all vector spaces, linear maps and tensor products are over the complex field $\C$. 

\section{Preliminaries}

In this section, we recall the notions of Lie conformal algebras, conformal modules and cohomology. Also, we gather some known results for later use. The material can be found in the literatures \cite{BKV,DK,DK3,KAC,HB1}.

A {\bf conformal algebra} $\mathcal{A}$ is a $\mathbb{C}[\partial]$-module endowed with a $\mathbb{C}$-linear map
$\mathcal{A}\otimes \mathcal{A} \rightarrow \mathcal{A} [\lambda], ~a\otimes b\mapsto a_\lambda b,$
satisfying conformal sesquilinearity:
\begin{eqnarray}\label{sesquilinearity1}
(\partial a)_\lambda b=-\lambda a_\lambda b,~~~
a_\lambda \partial b=(\partial+\lambda)a_\lambda b, ~~\forall ~~ a, b\in \mathcal{A}.
\end{eqnarray}
If, in addition, it satisfies associativity:
\begin{eqnarray}\label{ASS}
(a_\lambda b)_{\lambda+\mu}c=a_\lambda (b_\mu c), ~~\forall ~~ a, b\in \mathcal{A},
\end{eqnarray}
then $\mathcal{A}$ is called an {\bf associative conformal algebra}.

A conformal algebra $\mathcal{A}$ is called {\bf finite} if it has finite rank as $\mathbb{C}[\partial]$-module. By conformal sesquilinearity, the following equalities hold ($a, b, c\in \A$):
\begin{align}\label{NS3}
(a_{-\la-\p}b)_{\la+\mu}c=(a_\mu b)_{\la+\mu} c,\ \
a_{\mu}(b_{-\la-\p}c)=a_{\mu}(b_{-\la-\mu-\p}c).
\end{align}
Changing the variables in \eqref{NS3} gives
\begin{align}\label{NS4-4}
(a_{-\mu-\p}b)_{-\la-\p}c=(a_{-\la-\mu-\p}b)_{-\la-\p}c,\ \
a_{-\la-\mu-\p}(b_{-\la-\p}c)=a_{-\la-\mu-\p}(b_\mu c).
\end{align}	

\begin{defi}{\rm
		A {\bf Lie conformal algebra} $\A$ is a $\C[\partial]$-module endowed with a $\C$-linear map
		$ \A\otimes \A\rightarrow \A[\lambda],~ a\otimes b \mapsto [a_\lambda b],$
		called the $\la$-bracket, and
		satisfying the following axioms for all $a, b, c\in \A$:
		
		Conformal sesquilinearity: $[\partial a_\lambda b]=-\lambda[a_\lambda b],\ \ [ a_\lambda \partial b]=(\partial+\lambda)[a_\lambda b]$,
		
		Skew-symmetry: ~~~~~~~~~~~~~~~~ ${[a_\lambda b]} = -[b_{-\lambda-\partial}a]$,
		
		Jacobi  identity: ~~~~~~~~~~~~~~~~~ ${[a_\lambda[b_\mu c]]}=[[a_\lambda b]_{\lambda+\mu}c]+[b_\mu[a_\lambda c]]$.
	}\end{defi}

	Let $M$ and $N$ be $\mathbb{C}[\partial]$-modules. A {\bf conformal linear map} from $M$ to $N$ is a $\mathbb{C}$-linear map
	$f_\lambda: M\rightarrow N[\lambda]$ satisfying
	$f_\lambda(\partial u)=(\partial+\lambda)f_\lambda u,$ for $u\in M.$
	The vector space of all such maps, denoted by ${\rm Chom}(M,N)$, is a $\C[\partial]$-module via:
	\begin{align*}
	(\partial f)_\la=-\la f_\la, \ \mbox{for}\ \ f_\la\in {\rm Chom}(M,N).
	\end{align*}
	Define the {\bf conformal dual} of a $\C[\partial]$-module $M$ as $M^{*c}={\rm Chom}(M,\C)$, where $\C$ is viewed as the trivial $\C[\partial]$-module, that is,
	$ M^{*c}= \{a:M\rightarrow \C[\lambda] \mid  \mbox{$a$ is $\C$-linear and } a_{\lambda}(\partial b)=\lambda a_{\lambda}b   \}.$
	If $M$ is a finitely generated $\C[\partial]$-module, then ${\rm Cend}(M):={\rm Chom}(M, M)$
	is an associative conformal algebra via:
	\begin{align*}
	(f_\la g)_\mu v=f_\la(g_{\mu-\la}v), ~~ \mbox{for} ~~v\in M, ~ f,g\in {\rm Cend}(M).
	\end{align*}
	Hence ${\rm Cend}(M)$ becomes a Lie conformal algebra, denoted by ${\rm gc}(M)$ and called the {\bf general Lie conformal algebra} on $M$, with respect to
	the following $\la$-bracket:
	\begin{align*}
	[f_\la g]_\mu v=f_\la(g_{\mu-\la}v) -g_{\mu-\la}(f_\la v),~~ \mbox{for} ~~v\in M, ~ f,g\in {\rm Cend}(M).
	\end{align*}
	
	Hereafter all $\C[\partial]$-modules are assumed to be finitely generated.	
	\begin{defi}\label{def1}{\rm   A $\mathbb{C}[\partial]$-module $M$ is called a {\bf module} of a Lie conformal algebra $\A$
			if there is a $\mathbb{C}$-linear map $\rho:\A\rightarrow {\rm Cend}(M)$, such that
			\begin{align}
			\rho(a)_\lambda\rho(b)_\mu-\rho(b)_\mu\rho(a)_\lambda=[\rho(a)_\lambda\rho(b)]_{\lambda+\mu}=\rho([a_\lambda b])_{\lambda+\mu}, ~~\rho(\partial(a))_\lambda=-\lambda\rho(a)_\lambda, ~~\forall ~a,b\in\A.\label{module}
			\end{align}That is, $\rho$ is a homomorphism of Lie conformal algebras from $\A$ to ${\rm gc}(M)$.}
	\end{defi}
	
	For convenience, we will denote a module $M$ of the Lie conformal algebra $\A$ by $(M;\rho)$.
	It is straightforward to check that the following identities hold for all $a,b\in \A$:
	\begin{align}
	\rho([a_\lambda b])_{-\p-\mu}&=\rho(a)_\lambda \rho(b)_{-\p-\mu} -\rho(b)_{-\p-\la-\mu} \rho(a)_\lambda, \label{module1}\\
	\rho([a_\mu b])_{-\p-\la} & =\rho(a)_\mu \rho(b)_{-\p-\la} -\rho(b)_{-\p-\la-\mu} \rho(a)_{-\p-\lambda}. \label{module2}
	\end{align}
	It is not hard to check that $(M^{*c};\rho^*)$ is a module of $\A$, where  $\rho^{*}:\A\rightarrow {\rm gc}(M^{*c})$ is defined by
	\begin{equation*}
	(\rho^{*}(a)_{\lambda}\varphi)_{\mu}v=-\varphi_{\mu-\lambda}(\rho(a)_{\lambda}v), ~\mbox {for}~a\in \A, ~\varphi\in M^{*c}, ~v\in M.
	\end{equation*}
	Define $\ad:\A\rightarrow {\rm gc}(\A)$ by $\ad(a)_\lambda b=[a_{\lambda}b]$ for $a,b\in \A$. Then  $(\A;\ad)$ is a module of $\A$, called the {\bf adjoint module}.  Hence, $(\A^{*c};\ad^*)$ is also a module of $\A$, called the {\bf coadjoint module}.

	\begin{prop}{\rm(\cite{DK3})} \label{p1}Given a Lie conformal algebra $\mathcal{A}$ and an $\mathcal{A}$-module $(M;\rho)$, the vector space $\mathcal{A}\oplus M$ is a $\C[\partial]$-module via $\partial(a,m)=(\partial^{\A} a,\partial^M m)$ and then carries a Lie conformal algebra structure given by
		\begin{align*}
		[(a,m)_\lambda(b,n)]=([a_\lambda b],\rho(a)_\lambda n-\rho(b)_{-\partial-\lambda} m), ~\mbox{for}~ a,b\in\mathcal{A}, ~ m,n\in M.
		\end{align*}
		It is called the {\bf semi-direct product} of $\A$ and $M$, and denoted by $\A\ltimes_\rho M$.
	\end{prop}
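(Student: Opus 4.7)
The plan is to verify directly that the proposed $\lambda$-bracket on $\A\oplus M$ satisfies the three defining axioms of a Lie conformal algebra. Each axiom decomposes along the direct sum into an $\A$-component and an $M$-component, and in every case the $\A$-component reduces immediately to the corresponding axiom already known for $\A$, so the essential work lies entirely on the $M$-component.

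First I will check conformal sesquilinearity, namely $[\partial(a,m)_\lambda(b,n)]=-\lambda[(a,m)_\lambda(b,n)]$ and $[(a,m)_\lambda\partial(b,n)]=(\partial+\lambda)[(a,m)_\lambda(b,n)]$. On the $M$-slot these follow from $\rho(\partial a)_\lambda=-\lambda\rho(a)_\lambda$ (the second half of \eqref{module}) together with the fact that $\rho(b)$ is a conformal linear map, which governs how $\rho(b)_{-\partial-\lambda}$ behaves when applied to $\partial m$. For skew-symmetry I will substitute $\mu\mapsto-\partial-\lambda$ in the definition of the bracket and observe that the $M$-slot of $[(b,n)_{-\partial-\lambda}(a,m)]$ becomes $\rho(b)_{-\partial-\lambda}m-\rho(a)_\lambda n$, which is precisely the negative of the $M$-slot of $[(a,m)_\lambda(b,n)]$, while the $\A$-slot uses the skew-symmetry axiom of $\A$ itself.

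The main step is the Jacobi identity. Expanding the three brackets in
\[
[(a,m)_\lambda[(b,n)_\mu(c,p)]]=[[(a,m)_\lambda(b,n)]_{\lambda+\mu}(c,p)]+[(b,n)_\mu[(a,m)_\lambda(c,p)]]
\]
with the defining formula and then collecting terms on the $M$-component yields three groups, distinguished by which of $m$, $n$, or $p$ appears linearly. The terms involving $p$ reduce to
$\rho(a)_\lambda\rho(b)_\mu p-\rho(b)_\mu\rho(a)_\lambda p=\rho([a_\lambda b])_{\lambda+\mu}p$, which is exactly the first half of \eqref{module}. The terms in $n$ (respectively, $m$) contain factors $\rho(c)_{-\partial-\mu}$ and $\rho(c)_{-\partial-\lambda-\mu}$ (respectively, $\rho(b)_{-\partial-\lambda}$ and $\rho(c)_{-\partial-\lambda}$), and after clearing signs they rearrange into precisely the derived identities \eqref{module1} and \eqref{module2}.

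The only subtlety, and hence the main obstacle, is the book-keeping of the shifted arguments $-\partial-\lambda$ and $-\partial-\mu$: one has to be careful that the symbol $\partial$ appearing inside such subscripts acts to the left on the entire subsequent expression, under the same convention that makes the skew-symmetry formula $[a_\lambda b]=-[b_{-\partial-\lambda}a]$ consistent. Once this convention is fixed, \eqref{module1} and \eqref{module2} are exactly the two identities needed beyond the module axiom \eqref{module} to close the Jacobi identity on $\A\oplus M$, and the proof requires no further input.
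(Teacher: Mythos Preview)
The paper does not supply its own proof of this proposition; it is quoted as a known result with a citation to \cite{DK3}. Your direct verification is correct and is precisely the standard argument one would give: the $\A$-component of each axiom is inherited from $\A$, while on the $M$-component the Jacobi identity splits into three groups according to which of $m,n,p$ occurs, handled respectively by the module axiom \eqref{module} and its shifted consequences \eqref{module1} and \eqref{module2}. Your remark about the $\partial$ bookkeeping in subscripts such as $-\partial-\lambda$ is well placed; with the usual convention that this $\partial$ acts on the surrounding $M$-valued expression, the identities close exactly as you outline.
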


	Let us recall the cohomology complex for a Lie
	conformal algebra $\A$ with coefficients in a  module $(M;\rho)$ (see \cite{DK3} for details). Set $C^{0}(\A,M)=M/\partial^M M$.   For $k\geq 1 $, denote by $C^k(\A,M)$ the space of $\C$-linear map $f:\A^{\otimes k}\rightarrow \C[\lambda_{1},\cdots,\lambda_{k-1}]\otimes M $ satisfying conformal sesquilinearity:
	\begin{align}
	\label{eq:coboundary1}f_{\lambda_1,\cdots,\lambda_{k-1}}(a_{1},\cdots,\partial a_{i},\cdots , a_{k})=&-\lambda_{i}f_{\lambda_1,\cdots,\lambda_{k-1}}(a_1,\cdots,a_k),\quad 1\leq i\leq k-1,\\
	\label{eq:coboundary2} f_{\lambda_1,\cdots,\lambda_{k-1}}(a_{1},\cdots , a_{k-1},\partial a_k)=&-\lambda^\dag_kf_{\lambda_1,\cdots,\lambda_{k-1}}(a_1,\cdots,a_k),
	\end{align}
	where $\lambda_{k}^{\dag}=-\sum_{j=1}^{k-1}\lambda_{j}-\partial^M$,  and  $f$ is skew-symmetric with respect to simultaneous permutations of the $a_i$'s and the $\la_i$'s in the sense that
	for every permutation $\sigma$ of the indices $\{1,\cdots,k\}$, that is, 	
	\begin{equation}
	\label{eq:coboundary3}	f_{\lambda_1,\cdots,\lambda_{k-1}}(a_{1},\cdots , a_{k-1},a_k)=(-1)^\sigma f_{\lambda_{\sigma(1)},\cdots,\lambda_{\sigma(k-1)}}(a_{\sigma(1)},\cdots , a_{\sigma(k-1)},a_{\sigma(k)})|_{\lambda_{k}\mapsto\lambda_{k}^{\dag}},
	\end{equation}
	where the notation in the RHS means that $\la_k$ is replaced by $\lambda_{k}^{\dag}=-\sum_{j=1}^{k-1}\lambda_{j}-\partial^M$, if it occurs.		
	
	For $ \bar{m}\in C^{0}(\A,M)=M/\partial^M M$, let $\mathbf{d}~ \bar{m}\in C^{1}(\A,M)$ be the following $\C[\partial]$-module homomorphism:
	\begin{align}\label{o-coboundary}
	(\mathbf{d}~ \bar{m})(a)=\rho(a)_{-\partial^M}m.
	\end{align}
	This is well defined since, if $\partial^M m\in \partial^M M$, the RHS is zero due to conformal sesquilinearity. For $f\in C^k(\A,M)$, with $k\geq 1$, define $\mathbf{d}f\in C^{k+1}(\A,M)$ by
	\begin{equation}\label{eq:coboundary LCA}
	\begin{split}
	&({\mathbf{d} f})_{\lambda_1,\cdots,\la_k}(a_1,a_2,\cdots,a_{k+1})=\sum_{i=1}^{k}(-1)^{i+1}\rho(a_i)_{\lambda_{i}} f_{\lambda_1,\cdots,\hat{\la_i},\cdots,\la_k}(a_1,\cdots,\hat{a}_i,\cdots,a_{k+1})\\& \quad\quad \quad\quad \quad\quad \quad \quad\quad \quad\quad +\sum_{i=1}^{k}(-1)^{i}f_{\lambda_1,\cdots,\hat{\la_i},\cdots,\la_k}(a_1,\cdots,\hat{a}_i,\cdots,a_k,[a_{i\lambda_{i}}a_{k+1}])\\
	&\quad\quad \quad\quad \quad\quad \quad\quad \quad \quad\quad +\sum_{i,j=1,i<j}^{k}(-1)^{k+i+j+1} f_{{\lambda_1,\cdots,\hat{\la}_i,\cdots,\hat{\la}_j,\cdots,\la_k,\la^\dag_{k+1}}}(a_1,\cdots,a_k,[a_{i\lambda_{i}}a_{j}])
	\\&\quad\quad \quad\quad \quad\quad \quad\quad \quad \quad\quad  +(-1)^k\rho(a_{k+1})_{\lambda^\dag_{k+1}} f_{\lambda_1,\cdots,\la_{k-1}}(a_1,\cdots,a_{k}),
	\end{split}
	\end{equation}
	where $a_{1},\cdots, a_{k+1}\in\A$, $\hat{a_i}$ means that the entry $a_i$ is omitted, and  $\lambda_{k+1}^{\dag}=-\sum_{j=1}^{k}\lambda_{j}-\partial^M$ with $\partial^M$ acting from the left.
	\begin{theo}{\rm(\cite{DK3})} For $f\in C^k(\A,M)$, we have $\mathbf{d}f\in C^{k+1}(\A,M)$ and $\mathbf{d}^2 f=0$. This makes $(C^*(\A,M), \mathbf{d})$ into a cohomology complex.
	\end{theo}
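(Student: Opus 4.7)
The plan is to verify the two assertions in sequence: first that $\mathbf{d}f\in C^{k+1}(\A,M)$, i.e.\ that $\mathbf{d}f$ satisfies the sesquilinearity conditions \eqref{eq:coboundary1}--\eqref{eq:coboundary2} and the skew-symmetry \eqref{eq:coboundary3}; second that $\mathbf{d}^2 f=0$. Both parts follow the classical Chevalley--Eilenberg pattern, with additional bookkeeping of the $\lambda$-variables and of the extremal substitution $\lambda_{k+1}^{\dag}=-\sum_{j=1}^{k}\lambda_j-\partial^M$. The case $k=0$ (where $\mathbf{d}\bar{m}$ is defined by \eqref{o-coboundary}) reduces to the $k=1$ instance of the general formula, so I would focus on $k\geq 1$.

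For membership in $C^{k+1}(\A,M)$, I would substitute $\partial a_i$ into \eqref{eq:coboundary LCA} with $1\le i\le k$ and examine each of the four sums term by term. The conformal sesquilinearity of $\rho$ (the second relation in \eqref{module}), of the $\lambda$-bracket, and the inductive sesquilinearity of $f$ in the form \eqref{eq:coboundary1}--\eqref{eq:coboundary2}, each extract the required factor $-\lambda_i$. Substituting $\partial a_{k+1}$ in the last slot is analogous but produces $-\lambda_{k+1}^{\dag}$ after pulling $\partial^M$ out through the module action and the $f$-values. Skew-symmetry reduces, as usual, to invariance up to a sign under an adjacent transposition $(a_i,\lambda_i)\leftrightarrow(a_{i+1},\lambda_{i+1})$. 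Swaps inside the first $k$ slots dispatch term by term using the skew-symmetry of $f$ and of the $\lambda$-bracket; the swap involving the $(k+1)$-th slot is the genuinely new case and forces cross-cancellations among the first, second and fourth sums of \eqref{eq:coboundary LCA} via the $\lambda^{\dag}$-substitution.

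For the cocycle identity, I would expand $(\mathbf{d}^2 f)_{\lambda_1,\dots,\lambda_{k+1}}(a_1,\dots,a_{k+2})$ by substituting \eqref{eq:coboundary LCA} into itself and classify the resulting families by shape: (i) terms of the form $\rho(a_i)_{\lambda_i}\rho(a_j)_{\lambda_j}f(\dots)$ with two outer module actions; (ii) mixed terms in which one $\rho$ stands outside while a single bracket appears as an argument of $f$; and (iii) terms where $f$ is evaluated on a nested bracket $[a_{i\lambda_i}[a_{j\lambda_j}a_\ell]]$. Type (i) cancels in pairs by the commutation relation \eqref{module}, together with its $\lambda^{\dag}$-variants \eqref{module1}--\eqref{module2} when one of the $\rho$'s sits in the extremal slot. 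Type (iii) cancels by the Jacobi identity of $\A$, exactly mirroring the classical Chevalley--Eilenberg calculation. Type (ii) cancels by the first relation in \eqref{module}, which expresses the compatibility of $\rho$ with the $\lambda$-bracket.

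The hard part will be the bookkeeping of signs and of $\lambda^{\dag}$-substitutions when the extremal indices interact. In the expansion of $\mathbf{d}^2 f$ the outer differential inserts the label $\lambda_{k+2}^{\dag}$ while the inner differential already carries $\lambda_{k+1}^{\dag}$, and before the cancellations above can be read off, these nested $\dag$-substitutions must be resolved and the permutation indices reorganized consistently with the skew-symmetry convention \eqref{eq:coboundary3}. This is precisely where the dual module identities \eqref{module1}--\eqref{module2} are required, and it is the only ingredient that does not transcribe verbatim from the classical Lie-algebra proof. Once this bookkeeping is carried out, the three cancellations of types (i)--(iii) close the argument.
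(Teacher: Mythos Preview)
The paper does not supply its own proof of this theorem: it is stated with the citation \cite{DK3} and no argument follows. There is therefore nothing in the paper to compare your proposal against.

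That said, your outline is a sound direct verification along Chevalley--Eilenberg lines, and your identification of the genuinely new difficulty---tracking the nested $\lambda^{\dag}$-substitutions when the extremal slots interact, and invoking \eqref{module1}--\eqref{module2} there---is accurate. If you were to write this out in full it would constitute a valid proof, though a lengthy one; the original reference \cite{DK3} handles the bookkeeping by working with a different but equivalent presentation of the complex (in terms of $\C[\partial]$-multilinear maps with values in $M[\lambda_1,\dots,\lambda_k]$ modulo the image of $\partial+\sum\lambda_i$), which makes the extremal slot less exceptional and streamlines exactly the step you flag as hardest.
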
				
	
	\section{Nijenhuis-Richardson bracket, quasi-twilled Lie conformal algebras and $L_\infty$-algebras} 
	In this section, we first  recall  the Nijenhuis-Richardson bracket for Lie conformal algebras and give the notion of bidegree on Lie conformal algebra cohomology complex. Then we introduce the notions of twilled Lie conformal algebras and quasi-twilled Lie conformal algebras and show that they induce a differential graded Lie algebra and an $L_\infty$-algebra,  respectively.

	\subsection{Nijenhuis-Richardson bracket for Lie conformal algebras and bidegrees}
	
	A permutation $\sigma\in\perm_n$ is called an {\bf $(i,n-i)$-unshuffle} if $\sigma(1)<\cdots<\sigma(i)$ and $\sigma(i+1)<\cdots<\sigma(n)$. If $i=0$ or $i=n$, we assume $\sigma=\Id$. The set of all $(i,n-i)$-unshuffles is denoted by $\perm_{(i,n-i)}$.
	
	Let $\A$ be a $\C[\partial]$-module. Set $C^*(\A,\A)=\oplus_{k\geq1}C^k(\A,\A)$, where $C^k(\A,\A)$ is the space of $\C$-linear maps from   $\A^{\otimes k}$ to $ \C[\lambda_{1},\cdots,\lambda_{k-1}]\otimes \A$ satisfying \eqref{eq:coboundary1}-\eqref{eq:coboundary3}.
	For $f\in C^m(\A,\A)$ and $g\in C^n(\A,\A)$, define the Nijenhuis-Richardson (NR) bracket on $C^*(\A,\A)$ by
	\begin{equation}
	[f,g]_\NR=f\diamond g-(-1)^{(m-1)(n-1)}g\diamond f,
	\end{equation}
	where $f\diamond g\in C^{m+n-1}(\A,\A)$ is defined by ($a_1,a_2,\cdots,a_{m+n-1}\in \A$)
	\begin{align*}
	&(f\diamond g)_{\lambda_1,\cdots,\lambda_{m+n-2}}(a_1,a_2,\cdots,a_{m+n-1})\nonumber\\
	\nonumber=&\sum_{\sigma\in \perm_{(n,m-1)}}(-1)^\sigma  f_{\lambda_{\sigma(1)}+\cdots+\lambda_{\sigma(n)},\lambda_{\sigma(n+1)},\cdots,\lambda_{\sigma(m+n-2)}}\big(g_{\lambda_{\sigma(1)},\cdots, \nonumber\lambda_{\sigma(n-1)}}(a_{\sigma(1)},\cdots,a_{\sigma(n)}\big),\\
	& a_{\sigma(n+1)},\cdots,a_{\sigma(m+n-1)})|_{\lambda_{m+n-1}\mapsto\lambda_{m+n-1}^{\dag}},
	\end{align*}
	where  $\lambda_{m+n-1}^{\dag}=-\sum_{i=1}^{m+n-2}\lambda_{i}-\partial$. Furthermore, we have
	\begin{lemm} {\rm (\cite{DK13,Wu})}
		$(C^*(\A,\A),[-,-]_\NR)$ is a graded Lie algebra. Moreover, a 2-cochain $\pi\in C^2(\A,\A)$ defines a Lie conformal algebra structure on $\A$ by
		$$[a_\lambda b]:=\pi_\lambda(a,b),\quad\forall~a,b\in\A,$$
		if and only if $[\pi,\pi]_\NR=0$.
	\end{lemm}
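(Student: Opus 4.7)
The plan is to establish the two claims separately. The graded Lie algebra property of $(C^*(\A,\A),[-,-]_\NR)$ is the substantive part; the Maurer--Cartan characterization of Lie conformal brackets then follows as a direct computation.

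For the graded Lie algebra structure, I would first note that graded skew-symmetry $[f,g]_\NR = -(-1)^{(m-1)(n-1)}[g,f]_\NR$ is immediate from the definition. The main work is the graded Jacobi identity
\begin{equation*}
[[f,g]_\NR,h]_\NR + (-1)^{(m-1)(n+p)}[[g,h]_\NR,f]_\NR + (-1)^{(p-1)(m+n)}[[h,f]_\NR,g]_\NR = 0
\end{equation*}
for $f\in C^m(\A,\A)$, $g\in C^n(\A,\A)$, $h\in C^p(\A,\A)$. Following the classical Gerstenhaber/Nijenhuis--Richardson strategy, this would follow from a graded right pre-Lie identity for $\diamond$, namely
\begin{equation*}
(f\diamond g)\diamond h - f\diamond(g\diamond h) = (-1)^{(n-1)(p-1)}\bigl((f\diamond h)\diamond g - f\diamond(h\diamond g)\bigr).
\end{equation*}
Unfolding both sides via the defining formula for $\diamond$ converts the identity into a bookkeeping statement about compositions of unshuffles: terms in which $g$ is inserted inside $h$ (or vice versa) appear on both sides with matching signs and cancel, while the \emph{parallel} terms where $g$ and $h$ are inserted into disjoint argument slots of $f$ survive symmetrically on both sides. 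The subtlety specific to the conformal setting is that one must track the substitution $\lambda_{k}^\dag = -\sum_j \lambda_j - \partial$ prescribed in the diamond product; these are handled using conformal sesquilinearity \eqref{eq:coboundary1}--\eqref{eq:coboundary2} together with the permutation property \eqref{eq:coboundary3}, which together ensure that the $\partial$ acting through the last slot commutes into the expected $\lambda^\dag$ substitution after symmetrization.

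For the second assertion, specialize $f=g=\pi\in C^2(\A,\A)$. Since $(m-1)(n-1)=1$, we have $[\pi,\pi]_\NR = 2\,\pi\diamond\pi$, so $[\pi,\pi]_\NR = 0$ iff $(\pi\diamond\pi)_{\lambda_1,\lambda_2}(a_1,a_2,a_3)=0$ for all $a_1,a_2,a_3\in\A$. The defining sum runs over the three $(2,1)$-unshuffles, producing three terms which, after applying the substitution $\lambda_3\mapsto -\lambda_1-\lambda_2-\partial$ and using the skew-symmetry of $\pi$ that is built into its membership in $C^2$ via \eqref{eq:coboundary3}, reassemble precisely as
\begin{equation*}
\pi_{\lambda_1+\lambda_2}(\pi_{\lambda_1}(a_1,a_2),a_3) + \pi_{\lambda_2}(a_2,\pi_{\lambda_1}(a_1,a_3)) - \pi_{\lambda_1}(a_1,\pi_{\lambda_2}(a_2,a_3)),
\end{equation*}
whose vanishing is exactly the Jacobi identity for $[a_\lambda b]:=\pi_\lambda(a,b)$. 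Since conformal sesquilinearity and skew-symmetry are already enforced by $\pi\in C^2(\A,\A)$, this completes the equivalence.

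The main obstacle I anticipate lies in the first part: the conformal NR diamond product combines a combinatorial unshuffle sum with the analytic $\lambda^\dag$-substitution, and the pre-Lie identity requires iterating this operation twice. Keeping the signs, the sums over unshuffles, and the nested substitutions consistent while displaying the cancellations is the bulk of the work. The conceptual input (that $\diamond$ is graded right pre-Lie and therefore its graded commutator is a Lie bracket) is classical, but the conformal variables generate extra $\partial$-terms whose proper reassembly through conformal sesquilinearity is the one genuinely new ingredient beyond the standard Nijenhuis--Richardson argument.
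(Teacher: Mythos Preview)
The paper does not supply its own proof of this lemma: it is quoted from \cite{DK13,Wu} and stated without argument. There is therefore nothing in the paper to compare your proposal against beyond the bare statement.

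That said, your outline is sound and matches the standard route taken in the cited references. The reduction of the graded Jacobi identity to a graded right pre-Lie identity for $\diamond$ is the classical Gerstenhaber--Nijenhuis--Richardson argument, and you correctly flag that the only conformal-specific issue is tracking the $\lambda^\dag$ substitutions through iterated insertions, which is handled by sesquilinearity \eqref{eq:coboundary1}--\eqref{eq:coboundary2} and the permutation invariance \eqref{eq:coboundary3}. Your computation for the second claim is also correct: $[\pi,\pi]_\NR = 2\,\pi\diamond\pi$, the three $(2,1)$-unshuffle terms reduce via skew-symmetry to the displayed expression, and its vanishing is precisely the conformal Jacobi identity. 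Since sesquilinearity and skew-symmetry are built into the definition of $C^2(\A,\A)$, the equivalence follows.
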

	
\emptycomment{	Let $(\A,\pi)$ be a Lie conformal algebra. For $k\geq 1$, define $\mathbf{d}_\pi:C^k(\A,\A)\rightarrow C^{k+1}(\A,\A)$ by
	$$\mathbf{d}_\pi f =(-1)^{k-1}[\pi,f]_\NR,\quad \forall~f\in C^k(\A,\A).$$
	More precisely, for $a_1,a_2,\cdots, a_{k+1}\in\A$, we have
	\begin{eqnarray*}
		&&(\mathbf{d}_\pi f)_{\lambda_1,\cdots,\lambda_k}(a_1,a_2,\cdots,a_{k+1})\\
		&=&\sum_{i=1}^{k+1}(-1)^{i+1}\pi_{\lambda_i} \big(a_{i},f_{\lambda_1,\cdots,\hat{\lambda_i},\cdots,\lambda_k}(a_{1},\cdots,\hat{a_i},\cdots ,a_{k+1})\big)_{\lambda_{k+1}\mapsto\lambda_{k+1}^{\dag}}\\
		&&+\sum_{1\leq i<j\leq k+1}(-1)^{i+j}f_{\lambda_i+\lambda_j,\lambda_1,\cdots,\hat{\lambda_i},\cdots,\hat{\lambda_j},\cdots,\lambda_k}
		\big(\pi_{\lambda_i}(a_{i},a_j),a_1,\cdots,\hat{a_i},\cdots,\hat{a_j},\cdots,a_{k+1}\big)|_{\lambda_{k+1}\mapsto\lambda_{k+1}^{\dag}}.
	\end{eqnarray*}
	Comparing this with \eqref{eq:coboundary LCA}, we obtain $\mathbf{d}_\pi(f)=\mathbf{d}(f)$ for $f\in C^k(\A,\A)$, $k\geq 1$.
	Because of the graded Jacobi identity, $\mathbf{d}_\pi$ is a square-zero derivation of degree $+1$ of $(C^*(\A,\A),[-,-]_\NR)$.}

	
	Let $\A_1$ and $\A_2$ be $\C[\partial]$-modules. In the following, the elements in $\A_1$ are usually denoted by $a,b, a_1,a_2,\cdots$ and the elements in $\A_2$ by $u,v,v_1,v_2,\cdots$. Let $f:\A_1^{\otimes k}\otimes \A_2^{\otimes l}\mapsto \C[\lambda_1,\cdots,\lambda_{k+l-1}]\otimes \A_1$ be a linear map satisfying \eqref{eq:coboundary1}, \eqref{eq:coboundary2} and the following condition:
	\begin{align}
	\label{eq:skew-condition}&f_{\lambda_1,\cdots,\lambda_{k+l-1}}(a_{1},\cdots ,a_k,v_{k+1},\cdots,v_{k+l})\nonumber \\
	=&(-1)^\sigma (-1)^\tau f_{\lambda_{\sigma(1)},\cdots,\lambda_{\sigma(k)},\lambda_{\tau(k+1)},\cdots,\lambda_{\tau(k+l-1)}}(a_{\sigma(1)},\cdots , a_{\sigma(k)},v_{\tau(k+1)},\cdots,v_{\tau(k+l)})|_{\lambda_{k+l}\mapsto\lambda_{k+l}^{\dag}},
	\end{align}
	for every permutation $\sigma$ of the indices $\{1,\cdots,k\}$ and  permutation $\tau$ of the indices $\{k+1,\cdots,k+l\}$.
	We can define a linear map $\hat{f}\in C^{k+l}(\A_1\oplus \A_2,\A_1\oplus \A_2)$ by
	\begin{eqnarray*}
		&&	\hat{f}_{\lambda_1,\cdots,\lambda_{k+l-1}}((a_1,v_1),\cdots,(a_{k+l},v_{k+l}))\\
		&=&\big(\sum_{\tau\in\perm_{(k,l)}}(-1)^\tau f_{\lambda_{\tau(1)},\cdots,\lambda_{\tau(k+l-1)}}(a_{\tau(1)},\cdots,a_{\tau(k)},v_{\tau(k+1)},\cdots,v_{\tau(k+l)}),0\big)|_{\lambda_{k+l}\mapsto\lambda_{k+l}^{\dag}}.
	\end{eqnarray*}
	Similarly, for a linear map $f:\A_1^{\otimes k}\otimes \A_2^{\otimes l}\mapsto \C[\lambda_1,\cdots,\lambda_{k+l-1}]\otimes \A_2$ satisfying \eqref{eq:coboundary1}, \eqref{eq:coboundary2} and \eqref{eq:skew-condition}, we obtain a linear map $\hat{f}\in C^{k+l}(\A_1\oplus \A_2,\A_1\oplus \A_2)$ by
	\begin{eqnarray*}
		&&	\hat{f}_{\lambda_1,\cdots,\lambda_{k+l-1}}((a_1,v_1),\cdots,(a_{k+l},v_{k+l}))\\ &=&\big(0,\sum_{\tau\in\perm_{(k,l)}}(-1)^\tau f_{\lambda_{\tau(1)},\cdots,\lambda_{\tau(k+l-1)}}(a_{\tau(1)},\cdots,a_{\tau(k)},v_{\tau(k+1)},\cdots,v_{\tau(k+l)})\big)|_{\lambda_{k+l}\mapsto\lambda_{k+l}^{\dag}}.
	\end{eqnarray*}
	The linear map $\hat{f}$ is called a {\bf lift} of $f$. For
	example, the lifts of linear maps $\alpha:\A_1\otimes\A_1\rightarrow\A_1[\lambda]$
	and $\beta:\A_1\otimes\A_2\rightarrow\A_2[\lambda]$ are  respectively
	given by
	\begin{align}
	\label{semi-direct-1}\hat{\alpha}_\lambda\big((a_1,v_1),(a_2,v_2)\big)=&(\alpha_\lambda(a_1,a_2),0),\\
	\label{semi-direct-2}\hat{\beta}_\lambda\big((a_1,v_1),(a_2,v_2)\big)=&(0,\beta_\lambda(a_1,v_2)-\beta_{-\partial-\lambda}(a_2,v_1)).
	\end{align}
	
	\begin{defi} Let $\A_1$ and $\A_2$ be $\C[\partial]$-modules.  A cochain
		$f\in C^{k+l+1}(\A_1\oplus \A_2,\A_1\oplus \A_2)$ has a
		{\bf bidegree} $k|l$, if the following conditions hold:
		\begin{itemize}
			\item[\rm(i)] If $X$ is an element in $\A_1^{\otimes k+1}\otimes \A_2^{\otimes l}$, then $f(X)\in\C[\lambda_1,\cdots,\lambda_{k+l}]\otimes\A_1;$
			\item[\rm(ii)] If $X$ is an element in $\A_1^{\otimes k}\otimes \A_2^{\otimes l+1}$, then $f(X)\in\C[\lambda_1,\cdots,\lambda_{k+l}]\otimes\A_2;$
			\item[\rm(iii)] All the other cases, $f(X)=0.$
		\end{itemize}
	\end{defi}
	We call $f\in C^{n}(\A_1\oplus \A_2,\A_1\oplus \A_2)$ a {\bf homogeneous cochain} if $f$ has a bidegree and denote its bidegree by $||f||$.
	Obviously, $\hat{\alpha}$ and $\hat{\beta}$ given by \eqref{semi-direct-1} and \eqref{semi-direct-2} are elements in $C^2(\A_1\oplus\A_2,\A_1\oplus\A_2)$  with $||\hat{\alpha}||=||\hat{\beta}||=1|0$. Naturally we obtain a homogeneous linear map of bidegree $1|0$, namely, $\hat{\mu}:=\hat{\alpha}+\hat{\beta}.$
	Observe that $\hat{\mu}$ is a multiplication of the semi-direct product type:
	$$
	\hat{\mu}_\lambda\big((a_1,v_1),(a_2,v_2)\big)=(\alpha_\lambda(a_1,a_2),\beta_\lambda(a_1,v_2)-\beta_{-\lambda-\partial}(a_2,v_1)),~a_1,a_2\in\A_1,~v_1,v_2\in\A_2.
	$$
	Even though $\hat{\mu}$ is not a lift (there is no $\mu$), we still use the symbol for our convenience below.
	
	The following lemma shows that the NR bracket on $C^*(\A_1\oplus\A_2,\A_1\oplus\A_2)$ is compatible with the
	bigrading.
	\begin{lemm}\label{lem:bidegree perserve}
		If $||f||=k_f|l_f$ and $||g||=k_g|l_g$, then $[f,g]_{\NR}$ has the bidegree $k_f+k_g|l_f+l_g.$
	\end{lemm}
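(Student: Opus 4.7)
The plan is to reduce the lemma to the statement that $f \diamond g$ alone has bidegree $(k_f+k_g)|(l_f+l_g)$. Since
$$[f,g]_{\NR} = f \diamond g - (-1)^{(m-1)(n-1)} g \diamond f$$
with $m = k_f+l_f+1$ and $n = k_g+l_g+1$, and since $g \diamond f$ has the same bidegree by symmetry, the conclusion follows. The arity of $f \diamond g$ is $m+n-1 = k_f+k_g+l_f+l_g+1$, matching that of a bidegree $(k_f+k_g)|(l_f+l_g)$ cochain, so only the $\A_1/\A_2$-typing needs to be verified.

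For the typing, I would fix inputs $(x_1,\ldots,x_{m+n-1})$, each one taken from either $\A_1$ or $\A_2$, and examine a single summand of $f \diamond g$ indexed by an unshuffle $\sigma \in \perm_{(n,m-1)}$. In that summand the entries $x_{\sigma(1)},\ldots,x_{\sigma(n)}$ feed into $g$, whose output becomes the first argument of $f$, with the remaining $m-1$ inputs $x_{\sigma(n+1)},\ldots,x_{\sigma(m+n-1)}$ filling the other slots of $f$. The bidegree of $g$ forces two mutually exclusive nonvanishing patterns: either $k_g+1$ of its inputs lie in $\A_1$ and $l_g$ in $\A_2$ with output in $\A_1$, or $k_g$ lie in $\A_1$ and $l_g+1$ in $\A_2$ with output in $\A_2$. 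The bidegree of $f$ imposes an analogous constraint on its $m$ arguments.

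Combining the four resulting sub-cases and tallying the counts of $\A_1$- and $\A_2$-entries among the original $x_i$'s, one verifies that a summand survives only when either the total count is $k_f+k_g+1$ inputs from $\A_1$ and $l_f+l_g$ from $\A_2$ (with output in $\A_1$), or $k_f+k_g$ from $\A_1$ and $l_f+l_g+1$ from $\A_2$ (with output in $\A_2$). Because this constraint is independent of the chosen unshuffle, the whole sum $f \diamond g$ satisfies conditions (i)--(iii) in the definition of bidegree $(k_f+k_g)|(l_f+l_g)$, completing the reduction. The only real obstacle is the notational bookkeeping of where each $x_i$ lands through the unshuffle sum and through the insertion of $g$'s output as the first argument of $f$; no algebraic identity beyond the definitions of $\diamond$ and of bidegree is needed.
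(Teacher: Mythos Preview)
Your proposal is correct. The paper states this lemma without proof, treating it as a routine verification; your type-counting argument through the four sub-cases of how $g$'s output feeds into $f$ is exactly the direct check the paper implicitly has in mind.
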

	
	It is straightforward to check that
	\begin{lemm}\label{lem:zero condition}
		If $||f||=-1|l$ (resp. $l|-1$) and $||g||=-1|k$ (resp. $k|-1$), then $[f,g]_{\NR}=0.$
	\end{lemm}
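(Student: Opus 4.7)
The plan is to unpack the bidegree convention and observe a type-mismatch in every term of the composition $f\diamond g$ and $g\diamond f$. Consider first the case $\|f\|=-1|l$ and $\|g\|=-1|k$. By the three-clause definition of bidegree with $k=-1$, clause (ii) is vacuous (since $\A_1^{\otimes -1}$ is meaningless), so $f$ is supported only on inputs living in $\A_2^{\otimes l}$ and then takes values in $\A_1$; similarly $g$ is supported on $\A_2^{\otimes k}$ with values in $\A_1$. In particular, both $f$ and $g$ vanish the moment any one of their inputs lies in $\A_1$.

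Next I would stare at the defining formula for $f\diamond g$. In every term of the unshuffle sum, $g$ is applied to $k$ of the arguments to produce an element of $\A_1$, which is then fed in as the first slot of $f$, together with the remaining $l-1$ original arguments. Regardless of what the remaining arguments are, the first slot of $f$ now lies in $\A_1$, so by the supporting property of $f$ the whole term is zero. Summing over unshuffles yields $f\diamond g=0$. The symmetric argument applied to $g\diamond f$ gives $g\diamond f=0$, so $[f,g]_{\NR}=0$.

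The second case, $\|f\|=l|-1$ and $\|g\|=k|-1$, is entirely parallel after swapping the roles of $\A_1$ and $\A_2$: now $f$ is supported only on $\A_1^{\otimes l}$ with values in $\A_2$, and $g$ is supported only on $\A_1^{\otimes k}$ with values in $\A_2$. In any term of $f\diamond g$, the inner evaluation of $g$ produces an element of $\A_2$ that is inserted into the first slot of $f$; since $f$ requires every slot to lie in $\A_1$, the term vanishes. Summing gives $f\diamond g=0$, and analogously $g\diamond f=0$, hence $[f,g]_{\NR}=0$.

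There is no real obstacle here; the only subtlety is a bookkeeping point, namely to verify that the $\C[\partial]$-linearity and the unshuffle reindexing in the NR formula do not produce any stray term in which the inner evaluation feeds a typed slot of the outer cochain with an element of the correct type. Since the output of the inner map lives in a fixed summand ($\A_1$ in the first case, $\A_2$ in the second) and the outer map is identically zero on that summand for its leading slot, this cannot happen, and the proof is complete.
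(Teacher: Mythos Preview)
Your argument is correct: a cochain of bidegree $-1|l$ is supported on $\A_2^{\otimes l}$ with values in $\A_1$, so feeding the $\A_1$-valued output of $g$ into any slot of $f$ kills the term, and the symmetric statement handles the other case. The paper omits the proof entirely (stating only that it is ``straightforward to check''), so your direct type-mismatch computation is exactly the kind of verification intended; an alternative one-line route is to invoke Lemma~\ref{lem:bidegree perserve} to see that $[f,g]_{\NR}$ would have bidegree $-2|\,k+l$, for which both clauses (i) and (ii) of the bidegree definition are vacuous and clause (iii) forces the bracket to vanish.
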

	
	\begin{prop}\label{pro:MC}
		Let $(\A,\pi)$ be a Lie conformal algebra, $M$ a $\C[\partial]$-module and  $\rho:\A\rightarrow {\rm Cend}(M)$ a linear map satisfying $\rho(\partial(a))_\lambda=-\lambda\rho(a)_\lambda$. Then $(M;\rho)$ is  a module of $\A$ if and only if
		\begin{equation*}
		[\hat{\pi}+\hat{\rho},\hat{\pi}+\hat{\rho}]_{\NR}=0.
		\end{equation*}
	\end{prop}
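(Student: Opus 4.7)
My plan is to read off both sides of the claimed equivalence from the single $2$-cochain $\hat\pi+\hat\rho\in C^2(\A\oplus M,\A\oplus M)$. From the lift formulas \eqref{semi-direct-1}--\eqref{semi-direct-2} (specialized to $\A_1=\A$, $\A_2=M$, $\alpha=\pi$, $\beta=\rho$), this cochain is precisely
$$(\hat\pi+\hat\rho)_\la\big((a,m),(b,n)\big)=\big(\pi_\la(a,b),\,\rho(a)_\la n-\rho(b)_{-\p-\la}m\big),$$
which is the candidate semi-direct product bracket on $\A\oplus M$ of Proposition \ref{p1}. By the graded Lie algebra lemma stated just above, this $2$-cochain defines a Lie conformal algebra structure on $\A\oplus M$ if and only if $[\hat\pi+\hat\rho,\hat\pi+\hat\rho]_{\NR}=0$. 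So the proposition reduces to showing that $\hat\pi+\hat\rho$ is a Lie conformal algebra multiplication on $\A\oplus M$ exactly when $(M;\rho)$ is a module of $\A$.

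Next I would use the bigrading to localize the content. Since $||\hat\pi||=||\hat\rho||=1|0$, Lemma \ref{lem:bidegree perserve} gives $||[\hat\pi+\hat\rho,\hat\pi+\hat\rho]_{\NR}||=2|0$, so this $3$-cochain is determined by its restrictions to $\A^{\otimes 3}$ (landing in $\A$) and $\A^{\otimes 2}\otimes M$ (landing in $M$). Expanding bilinearly,
$$[\hat\pi+\hat\rho,\hat\pi+\hat\rho]_{\NR}=[\hat\pi,\hat\pi]_{\NR}+2[\hat\pi,\hat\rho]_{\NR}+[\hat\rho,\hat\rho]_{\NR}.$$
Because $\hat\rho$ vanishes on any purely $\A^{\otimes 2}$ input, only $[\hat\pi,\hat\pi]_{\NR}$ contributes on $\A^{\otimes 3}$; this piece is the Jacobiator of $\pi$, which vanishes by hypothesis that $(\A,\pi)$ is a Lie conformal algebra.

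The substantive content lives on $\A^{\otimes 2}\otimes M$. There $[\hat\pi,\hat\pi]_{\NR}$ vanishes (no input is purely in $\A$), while a direct unshuffle computation of $[\hat\pi,\hat\rho]_{\NR}$ on inputs $(a,0),(b,0),(0,m)$ produces the term $\rho(\pi_\la(a,b))_{\la+\mu}m$, and the analogous evaluation of $[\hat\rho,\hat\rho]_{\NR}$ produces the two nested terms $\rho(a)_\la\rho(b)_\mu m$ and $\rho(b)_\mu\rho(a)_\la m$. Collecting terms and matching signs, vanishing on $\A^{\otimes 2}\otimes M$ is equivalent, modulo the already-assumed sesquilinearity $\rho(\p a)_\la=-\la\rho(a)_\la$, to the representation identity
$$\rho(a)_\la\rho(b)_\mu-\rho(b)_\mu\rho(a)_\la=\rho([a_\la b])_{\la+\mu}$$
of Definition \ref{def1}. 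Both implications then follow simultaneously.

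The main obstacle I anticipate is purely combinatorial: the $(n,m-1)$-unshuffle sum in the definition of $\diamond$ produces several contributions for each of $[\hat\pi,\hat\rho]_{\NR}$ and $[\hat\rho,\hat\rho]_{\NR}$, and one must carefully track the signs, the change of variable $\la_{k+1}\mapsto\la_{k+1}^\dag$, and the skew-symmetry \eqref{eq:skew-condition} built into the lifts $\hat\pi,\hat\rho$, to confirm that the net relation on $\A^{\otimes 2}\otimes M$ is precisely the module axiom and not some weaker or stronger variant.
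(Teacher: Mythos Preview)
Your proposal is correct and is precisely the ``direct calculation'' the paper alludes to (the paper's own proof consists of the single sentence ``It follows by a direct calculation''). Your use of the bidegree to localize the Maurer--Cartan equation to the $\A^{\otimes 3}$ and $\A^{\otimes 2}\otimes M$ components is exactly the right way to organize that calculation.
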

	\begin{proof}
		It follows by a direct calculation.
	\end{proof}
	
	Let $(\A,\pi)$ be a Lie conformal algebra and $(M;\rho)$  a module
	over $\A$. By the definitions of lift and bidegree, $\hat{\pi}+\hat{\rho}\in C^{1\mid 0}(\A\oplus
	M,\A\oplus M)$, and the subspace $C^k(\A,M)$ is identified with $C^{k\mid -1}(\A\oplus
	M,\A\oplus M)$. Set $C^*(\A,M)=\oplus_{k=1}^{+\infty}C^k(\A,M)$.
	Define the coboundary operator $\dM_{\pi+\rho}:C^k(\A,M)\rightarrow C^{k+1}(\A,M)$ by
	\begin{equation}\label{eq:CE-operator}
	\dM_{\pi+\rho} f:=(-1)^{k-1}[\hat{\pi}+\hat{\rho},\hat{f}]_{\NR},\quad \forall~f\in C^k(\A,M).
	\end{equation}
	By Lemma \ref{lem:bidegree perserve}, $\dM_{\pi+\rho} f\in C^{k+1}(\A,M) $. By Proposition \ref{pro:MC} and the graded Jacobi identity, we have $\dM_{\pi+\rho}\circ \dM_{\pi+\rho}=0$. Thus we obtain a well-defined cochain complex $(C^*(\A,M),\dM_{\pi+\rho})$.
	
	By a direct calculation, we have
	\begin{prop}\label{pro:CE-operator} Let $(\A,\pi)$ be a Lie conformal algebra and $(M;\rho)$  a module over
		$\A$. Then for all $f\in C^k(\A,M)$ and
		$a_1,a_2,\cdots,a_{k+1}\in \A$, we have
		\begin{eqnarray*}
			&&	(\dM_{\pi+\rho} f)_{\lambda_1,\cdots,\lambda_k}(a_1,\ldots,a_{k+1})\\
			&&=\sum_{i=1}^{k+1}(-1)^{i+1}\rho({a_i})_{\lambda_i} f_{\lambda_1,\cdots,\hat{\lambda_i},\cdots,\lambda_k}(a_1,\ldots,\hat{a_i},\ldots,a_{k+1})|_{\lambda_{k+1}\mapsto\lambda_{k+1}^{\dag}}\\
			&&+\sum_{1\leq i<j\leq k+1}(-1)^{i+j}f_{\lambda_i+\lambda_j,\lambda_1,\cdots,\hat{\lambda_i},\cdots,\hat{\lambda_j},\cdots,\lambda_k}(\pi_{\lambda_i}(a_i,a_j),\ldots,\hat{a_{i}},\ldots,\hat{a_{j}},\ldots,a_{k+1})|_{\lambda_{k+1}\mapsto\lambda_{k+1}^{\dag}}.
		\end{eqnarray*}
		Thus the coboundary operator $\dM_{\pi+\rho}:C^k(\A,M)\rightarrow C^{k+1}(\A,M)$ defined by \eqref{eq:CE-operator} is exactly the Lie conformal algebra coboundary operator for $(\A, \pi)$ with coefficients in $(M;\rho)$  for $k\geq 1$.
	\end{prop}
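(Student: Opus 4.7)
The plan is to unpack the Nijenhuis--Richardson bracket $[\hat{\pi}+\hat{\rho},\hat{f}]_\NR$ explicitly and verify that, after multiplication by $(-1)^{k-1}$, the resulting cochain matches the explicit coboundary formula \eqref{eq:coboundary LCA}. First I observe the bigradings: for $f\in C^k(\A,M)$ the lift $\hat{f}$ has bidegree $k|{-1}$, while both $\hat{\pi}$ and $\hat{\rho}$ have bidegree $1|0$. By Lemma~\ref{lem:bidegree perserve} the bracket $[\hat{\pi}+\hat{\rho},\hat{f}]_\NR$ automatically has bidegree $(k+1)|{-1}$, so it corresponds to a well-defined element of $C^{k+1}(\A,M)$, consistent with the stated codomain.

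Next I would expand the bracket into the four $\diamond$-terms $\hat{\pi}\diamond\hat{f}$, $\hat{f}\diamond\hat{\pi}$, $\hat{\rho}\diamond\hat{f}$, $\hat{f}\diamond\hat{\rho}$ and eliminate two of them on type grounds. Since $\hat{\pi}$ accepts only $\A$-inputs whereas $\hat{f}$ outputs in $M$, plugging $\hat{f}(\cdots)\in M$ into a slot of $\hat{\pi}$ gives zero, so $\hat{\pi}\diamond\hat{f}=0$; dually, $\hat{f}$ accepts only $\A$-inputs while $\hat{\rho}$ outputs in $M$, forcing $\hat{f}\diamond\hat{\rho}=0$. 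With $m=2,\,n=k$ in the NR sign, this reduces the computation to
\[
[\hat{\pi}+\hat{\rho},\hat{f}]_\NR=\hat{\rho}\diamond\hat{f}\;-\;(-1)^{k-1}\hat{f}\diamond\hat{\pi}.
\]

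I would then evaluate each surviving $\diamond$-term on $(a_1,\ldots,a_{k+1})\in\A^{\otimes(k+1)}$ using the defining unshuffle sum for $\diamond$ together with the explicit formulas \eqref{semi-direct-1}--\eqref{semi-direct-2}. In $\hat{f}\diamond\hat{\pi}$ only $(1,k)$-unshuffles with an $\A$-valued pair $(a_i,a_j)$ (with $i<j$) fed to $\hat{\pi}$ survive; reordering the output $\pi_{\la_i}(a_i,a_j)$ to the leading slot of $\hat{f}$ yields unshuffle signs $(-1)^{i+j-1}$ and the prescribed relabeling $\la_{k+1}\mapsto\la_{k+1}^{\dag}$ via the skew-symmetry \eqref{eq:coboundary3}. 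In $\hat{\rho}\diamond\hat{f}$, exactly one distinguished entry $a_i$ is fed to the $\A$-slot of $\hat{\rho}$ while the remaining $k$ entries go to $\hat{f}$; the two terms in \eqref{semi-direct-2} combine via skew-symmetry of $\hat{f}$ to produce the single sum $\sum_{i=1}^{k+1}(-1)^{i+1}\rho(a_i)_{\la_i}f_{\la_1,\ldots,\widehat{\la_i},\ldots,\la_k}(a_1,\ldots,\widehat{a_i},\ldots,a_{k+1})$, again with $\la_{k+1}\mapsto\la_{k+1}^{\dag}$. Multiplying by the overall $(-1)^{k-1}$ from \eqref{eq:CE-operator} matches the two sums in the proposition exactly.

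The main obstacle is the sign bookkeeping: one must simultaneously track the NR sign $(-1)^{(m-1)(n-1)}$, the unshuffle signs $(-1)^\sigma$ appearing in $\diamond$, the transposition signs incurred by pulling the distinguished entry (or the bracketed pair) into the first argument slot of $\hat{f}$, and the substitution $\la_{k+1}\mapsto\la_{k+1}^{\dag}$ dictated by \eqref{eq:coboundary3}. Once these are consolidated, the identification with \eqref{eq:coboundary LCA} is immediate for $k\geq 1$, which is the range where the proposition's formula is asserted (the case $k=0$ being handled separately by \eqref{o-coboundary}).
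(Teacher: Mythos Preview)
Your proposal is correct and follows exactly the approach the paper intends: the paper's proof is simply ``It follows by a direct calculation,'' and what you have written is precisely that calculation spelled out---identifying the bidegree, killing $\hat{\pi}\diamond\hat{f}$ and $\hat{f}\diamond\hat{\rho}$ on type grounds, and expanding the two surviving $\diamond$-terms. One small wording issue: in $\hat{\rho}\diamond\hat{f}$ on pure $\A$-inputs, the output of $\hat{f}$ is purely in $M$, so only one of the two terms in \eqref{semi-direct-2} is nonzero (rather than two terms ``combining''); the index $\lambda_i$ then appears after the substitution $\lambda_{k+1}\mapsto\lambda_{k+1}^{\dag}$ turns $-\partial-\mu$ into $\lambda_i$.
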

	
	\subsection{Quasi-twilled Lie  conformal algebras and $L_\infty$-algebras}
	
	Let $(\A,[\cdot_\lambda\cdot])$ be a Lie conformal algebra with a decomposition into the direct sum of  two $\C[\partial]$-modules $\A_1$ and $\A_2$, that  is, $\A=\A_1\oplus \A_2.$
	
	\begin{lemm}\label{lem:decomposition}
		Let $\pi\in C^2(\A,\A)$	be a  $2$-cochain. Then $\pi$ can be uniquely decomposed into four homogeneous linear maps
		$$
		\pi=\hat{\phi}_1+\hat{\mu}_1+\hat{\mu}_2+\hat{\phi}_2,
		$$
		where the bidegrees of $\phi_1$, $\mu_1$, $\mu_2$ and $\phi_2$ are  $2|-1,~1|0,~0|1$ and $-1|2$, respectively.
	\end{lemm}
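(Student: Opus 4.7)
The plan is to obtain the decomposition by projecting $\pi$ onto its components according to how many of its two inputs come from $\A_1$ versus $\A_2$, and in which summand the output lies. Let $p_1:\A\to\A_1$ and $p_2:\A\to\A_2$ denote the canonical projections. Any $\C$-linear map $\pi:\A\otimes\A\to\C[\lambda]\otimes\A$ is determined by its restrictions to $\A_i\otimes\A_j$ composed with $p_s$, giving eight pieces $(p_s\circ\pi)|_{\A_i\otimes\A_j}$ for $s,i,j\in\{1,2\}$. The $2$-cochain skew-symmetry $\pi_\lambda(x,y)=-\pi_{-\lambda-\partial}(y,x)$ from \eqref{eq:coboundary3} reduces these to the following independent data: a map $\alpha_1:\A_1^{\otimes 2}\to\C[\lambda]\otimes\A_1$, a map $\phi_1^\flat:\A_1^{\otimes 2}\to\C[\lambda]\otimes\A_2$, a map $\beta_1:\A_1\otimes\A_2\to\C[\lambda]\otimes\A_2$, a map $\alpha_2:\A_1\otimes\A_2\to\C[\lambda]\otimes\A_1$, a map $\beta_2:\A_2^{\otimes 2}\to\C[\lambda]\otimes\A_2$, and a map $\phi_2^\flat:\A_2^{\otimes 2}\to\C[\lambda]\otimes\A_1$.

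The four summands are then defined by grouping these data by bidegree. Set $\phi_1:=\phi_1^\flat$ (which is sesquilinear on $\A_1^{\otimes 2}$ and satisfies the trivial skew-symmetry condition \eqref{eq:skew-condition} for $k=2$, $l=0$); set $\mu_1$ to be the pair $(\alpha_1,\beta_1)$, where $\alpha_1$ is symmetric-unshuffle data of type $(2,0)$ and $\beta_1$ is of type $(1,1)$; set $\mu_2$ to be the pair $(\alpha_2,\beta_2)$, of mixed types $(1,1)$ and $(0,2)$ respectively; and set $\phi_2:=\phi_2^\flat$, of type $(0,2)$. The conformal sesquilinearity relations \eqref{eq:coboundary1}--\eqref{eq:coboundary2} for each piece are inherited from those of $\pi$, because the projections $p_1,p_2$ are $\C[\partial]$-linear. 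Applying the lift construction recalled just before the lemma produces homogeneous cochains $\hat{\phi}_1,\hat{\mu}_1,\hat{\mu}_2,\hat{\phi}_2\in C^2(\A_1\oplus\A_2,\A_1\oplus\A_2)$ of the claimed bidegrees $2|-1,\,1|0,\,0|1,\,-1|2$.

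To verify $\pi=\hat{\phi}_1+\hat{\mu}_1+\hat{\mu}_2+\hat{\phi}_2$, I evaluate both sides on a pair $((a_1,v_1),(a_2,v_2))$, expand the left side $\C$-linearly into the four blocks $\A_i\otimes\A_j$, project each block onto $\A_1$ and $\A_2$, and compare term-by-term with the lift formulas \eqref{semi-direct-1}--\eqref{semi-direct-2}; the mixed blocks $\A_1\otimes\A_2$ and $\A_2\otimes\A_1$ combine correctly precisely because the lift prescribes exactly the antisymmetrization $\beta_\lambda(a_1,v_2)-\beta_{-\partial-\lambda}(a_2,v_1)$ that the $2$-cochain skew-symmetry of $\pi$ enforces. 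Uniqueness is then automatic: if a second decomposition $\pi=\hat{\phi}_1'+\hat{\mu}_1'+\hat{\mu}_2'+\hat{\phi}_2'$ exists, restricting both sides to $\A_1^{\otimes 2},\,\A_1\otimes\A_2,\,\A_2^{\otimes 2}$ and projecting onto $\A_1$ and $\A_2$ produces six equalities that force, one by one, the unlifted pieces $(\alpha_1,\phi_1^\flat,\beta_1,\alpha_2,\beta_2,\phi_2^\flat)$ to coincide, whence the lifts coincide as well.

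The main (modest) obstacle is purely organizational: keeping track of which of the four bidegree components absorbs each of the six independent unlifted data, and then checking that the antisymmetrization built into the lift on mixed blocks matches the $2$-cochain skew-symmetry of $\pi$. No deeper calculation intervenes — there is no cocycle condition being used here, only the direct-sum decomposition of $\A$ and the general-nonsense behavior of cochains under projection and lifting.
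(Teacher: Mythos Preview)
Your proposal is correct and follows essentially the same approach as the paper: both arguments rest on the observation that $C^2(\A,\A)$ splits as the direct sum of the four bidegree components $C^{2|-1}\oplus C^{1|0}\oplus C^{0|1}\oplus C^{-1|2}$. The paper simply asserts this decomposition in one line, whereas you unpack it explicitly by projecting onto input and output blocks and then regrouping via the lift construction; your version is more detailed but not genuinely different in method.
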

	\begin{proof}
		By the definition of bidegree,  the space $C^2(\A,\A)$ can be decomposed into four subspaces
		$$
		C^2(\A,\A)=	C^{2|-1}(\A,\A)+C^{1|0}(\A,\A)+C^{0|1}(\A,\A)+C^{-1|2}(\A,\A).
		$$
		Thus $\pi$ is uniquely decomposed into homogeneous linear maps of bidegrees $2|-1,~1|0,~0|1$ and $-1|2$.
	\end{proof}
	
	Let ${\rm p}_1:\A\rightarrow\A_1$ and ${\rm p}_2: \A\rightarrow\A_2$ be the natural projections. For $a,b\in\A_1$, $u,v\in\A_2$, define
	\begin{align*}
	&[a_\lambda b]_1={\rm p_1}([a_\lambda b]),\quad \rho_{2 }(v)_\lambda a={\rm p_1}([v_\lambda a]),\quad \phi_{2\lambda}(u,v)={\rm p_1}([u_\lambda v]),\\
	&{[u_\lambda v]}_2={\rm p_2}([u_\lambda v]),\quad \rho_{1 }(a)_\lambda v={\rm p_2}([a_\lambda v]),\quad \phi_{1\lambda}(a,b)={\rm p_2}([a_\lambda b]).
	\end{align*}
	Then the $\lambda$-bracket of $\A$  can be uniquely written as
	\begin{equation*}
	[(a,u)_\lambda (b,v)]=([a_\lambda b]_1+\rho_{2}(u)_\lambda b-\rho_{2}(v)_{-\lambda-\partial}a+\phi_{2\lambda}(u,v),[u_\lambda v]_2+\rho_{1}(a)_\lambda v-\rho_{1}(b)_{-\lambda-\partial}u+\phi_{1\lambda}(a,b)).
	\end{equation*}
	
	Now we denote the Lie conformal algebra structure on $\A$ by $\Pi$, i.e. $$\Pi_\lambda((a,u),(b,v)):=[(a,u)_\lambda (b,v)].$$ Set $\Pi_\lambda=\hat{\phi}_1+\hat{\mu}_1+\hat{\mu}_2+\hat{\phi}_2$ as in Lemma \ref{lem:decomposition}.  Then
	\begin{align}
	\label{bracket-1}\hat{\phi}_{1\lambda}((a,u),(b,v))&=(0,\phi_{1\lambda}(a,b)),\\
	\label{bracket-2}\hat{\mu}_{1\lambda}((a,u),(b,v))&=([a_\lambda b]_1,\rho_{1}(a)_\lambda v-\rho_{1}(b)_{-\lambda-\partial}u),\\
	\label{bracket-3}\hat{\mu}_{2\lambda}((a,u),(b,v))&=(\rho_{2}(u)_\lambda b-\rho_{2}(v)_{-\lambda-\partial}a,[u_\lambda v]_2),\\
	\label{bracket-4}\hat{\phi}_{2\lambda}((a,u),(b,v))&=(\phi_{2\,\lambda}(u,v),0).
	\end{align}
	
	\begin{lemm}\label{proto-twilled}
		The Maurer-Cartan equation $[\Pi,\Pi]_{\NR}=0$ is equivalent to the following  conditions:
		\begin{eqnarray}\label{eq:equivalent of pi}
		\left\{\begin{array}{rcl}
		{}[\hat{\mu}_1,\hat{\phi}_1]_{\NR}&=&0,\\
		{}\frac{1}{2}[\hat{\mu}_1,\hat{\mu}_1]_{\NR}+[\hat{\mu}_2,\hat{\phi}_1]_{\NR}&=&0,\\
		{}[\hat{\mu}_1,\hat{\mu}_2]_{\NR}+[\hat{\phi}_1,\hat{\phi}_2]_{\NR}&=&0,\\
		{}\frac{1}{2}[\hat{\mu}_2,\hat{\mu}_2]_{\NR}+[\hat{\mu}_1,\hat{\phi}_2]_{\NR}&=&0,\\
		{}[\hat{\mu}_2,\hat{\phi}_2]_{\NR}&=&0.
		\end{array}\right.
		\end{eqnarray}
	\end{lemm}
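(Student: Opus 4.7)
The strategy is to expand $[\Pi,\Pi]_{\NR}=0$ bilinearly using the decomposition $\Pi=\hat{\phi}_1+\hat{\mu}_1+\hat{\mu}_2+\hat{\phi}_2$, separate the resulting brackets according to their bidegree via Lemma~\ref{lem:bidegree perserve}, and then read off \eqref{eq:equivalent of pi} from the fact that bihomogeneous components of $C^3(\A,\A)$ are linearly independent. Since each summand of $\Pi$ lies in $C^2(\A,\A)$, and thus has NR-degree $1$, the graded skew-symmetry of $[-,-]_{\NR}$ on such elements becomes ordinary symmetry: $[x,y]_{\NR}=[y,x]_{\NR}$. Consequently
\begin{align*}
[\Pi,\Pi]_{\NR}=\sum_{i=1}^{4}[x_i,x_i]_{\NR}+2\sum_{1\le i<j\le 4}[x_i,x_j]_{\NR},
\end{align*}
where $(x_1,x_2,x_3,x_4)=(\hat{\phi}_1,\hat{\mu}_1,\hat{\mu}_2,\hat{\phi}_2)$.

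By Lemma~\ref{lem:zero condition}, the two outermost diagonal brackets $[\hat{\phi}_1,\hat{\phi}_1]_{\NR}$ and $[\hat{\phi}_2,\hat{\phi}_2]_{\NR}$ both vanish, since in each case the two factors have a common coordinate equal to $-1$. The remaining eight brackets have, by Lemma~\ref{lem:bidegree perserve}, respective bidegrees $2|0$, $0|2$, $3|-1$, $2|0$, $1|1$, $1|1$, $0|2$, $-1|3$. Grouping terms with equal bidegree shows that $[\Pi,\Pi]_{\NR}$ decomposes as a sum of five bihomogeneous pieces lying in $C^{3|-1}$, $C^{2|0}$, $C^{1|1}$, $C^{0|2}$, $C^{-1|3}$ respectively, which (after dividing by $2$) are precisely the five expressions appearing on the left-hand sides of \eqref{eq:equivalent of pi}.

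Since the subspaces $C^{k|l}(\A,\A)$ for distinct $(k,l)$ intersect trivially inside $C^3(\A,\A)$, the total sum vanishes if and only if each bihomogeneous component vanishes separately, which gives the claimed equivalence in both directions. There is no real obstacle: the statement is entirely a bidegree bookkeeping exercise, driven by Lemmas~\ref{lem:bidegree perserve} and~\ref{lem:zero condition}. The only subtleties to double-check are (i)~the graded symmetry on degree-$1$ elements, which is what turns the $16$ bilinear terms into $10$ brackets with the diagonal terms carrying coefficient $1$, and (ii)~the consistency of this coefficient with the factors $\tfrac{1}{2}$ displayed in~\eqref{eq:equivalent of pi}.
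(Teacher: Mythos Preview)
Your proof is correct and follows essentially the same approach as the paper: expand $[\Pi,\Pi]_{\NR}$ bilinearly, use Lemma~\ref{lem:zero condition} to drop $[\hat{\phi}_1,\hat{\phi}_1]_{\NR}$ and $[\hat{\phi}_2,\hat{\phi}_2]_{\NR}$, and then invoke Lemma~\ref{lem:bidegree perserve} to separate the remaining terms by bidegree. The only cosmetic difference is that the paper writes out all fourteen surviving brackets explicitly before collapsing symmetric pairs, whereas you invoke the graded symmetry on degree-$1$ elements at the outset to reduce to ten terms with the correct coefficients.
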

	\begin{proof}
		By Lemma \ref{lem:zero condition}, we have
		\begin{align*}
		[\Pi,\Pi]_{\NR}=&[\hat{\phi}_1+\hat{\mu}_1+\hat{\mu}_2+\hat{\phi}_2,\hat{\phi}_1+\hat{\mu}_1+\hat{\mu}_2+\hat{\phi}_2]_{\NR}\\
		=&[\hat{\phi}_1,\hat{\mu}_1]_{\NR}+[\hat{\phi}_1,\hat{\mu}_2]_{\NR}+[\hat{\phi}_1,\hat{\phi}_2]_{\NR}+[\hat{\mu}_1,\hat{\phi}_1]_{\NR}+[\hat{\mu}_1,\hat{\mu}_1]_{\NR}
		\\&+[\hat{\mu}_1,\hat{\mu}_2]_{\NR}+[\hat{\mu}_1,\hat{\phi}_2]_{\NR}+[\hat{\mu}_2,\hat{\phi}_1]_{\NR}+[\hat{\mu}_2,\hat{\mu}_1]_{\NR}\\&+[\hat{\mu}_2,\hat{\mu}_2]_{\NR} +[\hat{\mu}_2,\hat{\phi}_2]_{\NR}+[\hat{\phi}_2,\hat{\phi}_1]_{\NR}+[\hat{\phi}_2,\hat{\mu}_1]_{\NR}+[\hat{\phi}_2,\hat{\mu}_2]_{\NR}\\
		=&(2[\hat{\mu}_1,\hat{\phi}_1]_{\NR})+([\hat{\mu}_1,\hat{\mu}_1]_{\NR}+2[\hat{\mu}_2,\hat{\phi}_1]_{\NR})+(2[\hat{\mu}_1,\hat{\mu}_2]_{\NR}+2[\hat{\phi}_1,\hat{\phi}_2]_{\NR})\\
		&+([\hat{\mu}_2,\hat{\mu}_2]_{\NR}+2[\hat{\mu}_1,\hat{\phi}_2]_{\NR})+(2[\hat{\mu}_2,\hat{\phi}_2]_{\NR}).
		\end{align*}
		By Lemma \ref{lem:bidegree perserve} and the definition of bidegree,  $[\Pi,\Pi]_{\NR}=0$ if and only if  \eqref{eq:equivalent of pi} holds.
	\end{proof}
	\begin{defi}{\rm
			Let $\A$ be a Lie conformal algebra with a decomposition into the direct sum of  two $\C[\partial]$-modules $\A_1$ and $\A_2$ and the Lie conformal algebra structure
			$\Pi=\hat{\phi}_1+\hat{\mu}_1+\hat{\mu}_2+\hat{\phi}_2$.
			\begin{itemize}
				\item[(i)]The triple $(\A,\A_1,\A_2)$ is called a {\bf twilled Lie conformal algebra} if $\phi_1=\phi_2=0$, or equivalently, $\A_1$ and $\A_2$ are subalgebras of $\A$. In this case, we denote $(\A,\A_1,\A_2)$ by $\A=\A_1\bowtie\A_2$.
				\item[(ii)]The triple $(\A,\A_1,\A_2)$ is called a {\bf quasi-twilled Lie conformal algebra} if $\phi_2=0$, i.e.,  $\A_2$ is a subalgebra of $\A$.
			\end{itemize}
		}\end{defi}
		
		By  Lemma \ref{proto-twilled}, we have
		\begin{lemm}\label{lem:twillL}
			The triple $(\A,\A_1,\A_2)$ is a twilled Lie conformal algebra if and only if the following  conditions hold:
			\begin{eqnarray}
			\label{twilled-1}\frac{1}{2}[\hat{\mu}_1,\hat{\mu}_1]_{\NR}&=&0,\\
			\label{twilled-2}[\hat{\mu}_1,\hat{\mu}_2]_{\NR}&=&0,\\
			\label{twilled-3}\frac{1}{2}[\hat{\mu}_2,\hat{\mu}_2]_{\NR}&=&0.
			\end{eqnarray}
		\end{lemm}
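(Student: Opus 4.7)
The plan is to obtain Lemma \ref{lem:twillL} as a direct specialization of Lemma \ref{proto-twilled}. By definition, the triple $(\A,\A_1,\A_2)$ is twilled precisely when $\phi_1=\phi_2=0$, i.e.\ $\hat{\phi}_1=\hat{\phi}_2=0$ in the canonical decomposition $\Pi=\hat{\phi}_1+\hat{\mu}_1+\hat{\mu}_2+\hat{\phi}_2$ from Lemma \ref{lem:decomposition}. So the statement I must prove reduces to: assuming $\hat\phi_1=\hat\phi_2=0$, the Lie conformal algebra axiom $[\Pi,\Pi]_{\NR}=0$ is equivalent to the conjunction of \eqref{twilled-1}, \eqref{twilled-2}, \eqref{twilled-3}.

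First I would invoke Lemma \ref{proto-twilled} to rewrite $[\Pi,\Pi]_{\NR}=0$ as the system \eqref{eq:equivalent of pi}. Then I would substitute $\hat\phi_1=\hat\phi_2=0$ into each of the five equations there. The first equation $[\hat\mu_1,\hat\phi_1]_{\NR}=0$ and the last equation $[\hat\mu_2,\hat\phi_2]_{\NR}=0$ become $0=0$ automatically, since $[-,0]_{\NR}=0$. The three middle equations collapse to exactly
\[
\tfrac{1}{2}[\hat\mu_1,\hat\mu_1]_{\NR}=0,\qquad [\hat\mu_1,\hat\mu_2]_{\NR}+[\hat\phi_1,\hat\phi_2]_{\NR}=0,\qquad \tfrac{1}{2}[\hat\mu_2,\hat\mu_2]_{\NR}=0,
\]
and the middle one also simplifies to $[\hat\mu_1,\hat\mu_2]_{\NR}=0$ because the term $[\hat\phi_1,\hat\phi_2]_{\NR}$ vanishes. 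This yields \eqref{twilled-1}--\eqref{twilled-3}.

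Conversely, if \eqref{twilled-1}, \eqref{twilled-2}, \eqref{twilled-3} all hold and $\hat\phi_1=\hat\phi_2=0$, then trivially all five equations of \eqref{eq:equivalent of pi} are satisfied, so $[\Pi,\Pi]_{\NR}=0$ and $\Pi$ indeed defines a Lie conformal algebra structure on $\A=\A_1\oplus\A_2$ with $\A_1,\A_2$ as subalgebras.

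There is no real obstacle here: once Lemma \ref{proto-twilled} is in place, the statement is a routine bookkeeping consequence. The only point worth being careful about is making clear that the equivalence between "$\A_1$ and $\A_2$ are subalgebras of $\A$" and "$\phi_1=\phi_2=0$" is immediate from the explicit formulas \eqref{bracket-1}--\eqref{bracket-4}: the projections of $[a_\lambda b]$ onto $\A_2$ and of $[u_\lambda v]$ onto $\A_1$ are respectively $\phi_{1\lambda}(a,b)$ and $\phi_{2\lambda}(u,v)$, so both vanish iff $\A_1$ and $\A_2$ are closed under the $\lambda$-bracket.
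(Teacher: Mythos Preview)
Your proposal is correct and matches the paper's approach: the paper simply states that Lemma \ref{lem:twillL} follows from Lemma \ref{proto-twilled} without giving further details, and your argument fills in precisely the routine substitution $\hat\phi_1=\hat\phi_2=0$ into the system \eqref{eq:equivalent of pi}.
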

		By \eqref{twilled-1},  $\hat{\mu}_1$ is a  Lie conformal algebra structure on $\A=\A_1\oplus\A_2$. By \eqref{bracket-2}, $(\A_1,[\cdot_\lambda\cdot]_{1})$ is a Lie conformal algebra and $(\A_2;\rho_{1})$ is a module over $(\A_1,[\cdot_\lambda\cdot]_{1})$. Similarly,$(\A_2,[\cdot_\lambda\cdot]_{2})$ is a Lie conformal algebra and $(\A_1;\rho_{2})$ is a module over $(\A_2,[\cdot_\lambda\cdot]_{2})$. Hence the $\lambda$-bracket on  the twilled Lie conformal algebra $\A=\A_1\bowtie\A_2$ is given by ($a,b\in\A_1,~u,v\in\A_2$):
		\begin{equation}\label{eq:twilled bracket}
		[(a,u)_\lambda (b,v)]=([a_\lambda b]_1+\rho_{2}(u)_\lambda b-\rho_{2}(v)_{-\lambda-\partial}a,[u_\lambda v]_2+\rho_{1}(a)_\lambda v-\rho_{1}(b)_{-\lambda-\partial}u).
		\end{equation}

		\begin{lemm}\label{lem:quasi-twillL}
			$(\A,\A_1,\A_2)$ forms a quasi-twilled Lie conformal algebra if and only if there hold
			\begin{eqnarray}
			\label{quasi-1}[\hat{\mu}_1,\hat{\phi}_1]_{\NR}&=&0,\\
			\label{quasi-2}\frac{1}{2}[\hat{\mu}_1,\hat{\mu}_1]_{\NR}+[\hat{\mu}_2,\hat{\phi}_1]_{\NR}&=&0,\\
			\label{quasi-3}[\hat{\mu}_1,\hat{\mu}_2]_{\NR}&=&0,\\
			\label{quasi-4}\frac{1}{2}[\hat{\mu}_2,\hat{\mu}_2]_{\NR}&=&0.
			\end{eqnarray}
		\end{lemm}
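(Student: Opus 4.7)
The plan is to reduce Lemma \ref{lem:quasi-twillL} directly to Lemma \ref{proto-twilled} by imposing the defining condition $\phi_2=0$ (equivalently $\hat{\phi}_2=0$) of a quasi-twilled Lie conformal algebra. First I would observe that $(\A,\A_1,\A_2)$ being quasi-twilled means $\A_2$ is a subalgebra of $\A$, which by \eqref{bracket-4} is equivalent to $\hat{\phi}_2=0$. Thus the structure $\Pi$ on $\A=\A_1\oplus\A_2$ decomposes as $\Pi=\hat{\phi}_1+\hat{\mu}_1+\hat{\mu}_2$ and, as before, defines a Lie conformal algebra structure precisely when $[\Pi,\Pi]_{\NR}=0$.

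Next I would substitute $\hat{\phi}_2=0$ into the five equivalent conditions in \eqref{eq:equivalent of pi}. The first two become \eqref{quasi-1} and \eqref{quasi-2} unchanged. The third condition $[\hat{\mu}_1,\hat{\mu}_2]_{\NR}+[\hat{\phi}_1,\hat{\phi}_2]_{\NR}=0$ collapses to $[\hat{\mu}_1,\hat{\mu}_2]_{\NR}=0$, giving \eqref{quasi-3}. The fourth condition $\frac{1}{2}[\hat{\mu}_2,\hat{\mu}_2]_{\NR}+[\hat{\mu}_1,\hat{\phi}_2]_{\NR}=0$ collapses to \eqref{quasi-4}. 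The fifth condition $[\hat{\mu}_2,\hat{\phi}_2]_{\NR}=0$ is automatic. Hence the five-term system is equivalent to the four-term system \eqref{quasi-1}--\eqref{quasi-4}, as required.

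Conversely, assuming \eqref{quasi-1}--\eqref{quasi-4} and $\hat{\phi}_2=0$, the same substitution shows that all five equations of Lemma \ref{proto-twilled} hold, so $[\Pi,\Pi]_{\NR}=0$, which by the graded-Lie-algebra characterization means $\Pi$ is a Lie conformal algebra structure with $\hat{\phi}_2=0$; equivalently, $(\A,\A_1,\A_2)$ is quasi-twilled. No genuine obstacle arises here: the only verification is that dropping $\hat{\phi}_2$ cleanly kills the terms $[\hat{\phi}_1,\hat{\phi}_2]_{\NR}$, $[\hat{\mu}_1,\hat{\phi}_2]_{\NR}$, and $[\hat{\mu}_2,\hat{\phi}_2]_{\NR}$, which is immediate from bilinearity of the NR bracket. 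The bidegree-homogeneity argument from Lemma \ref{proto-twilled} (via Lemmas \ref{lem:bidegree perserve} and \ref{lem:zero condition}) guarantees the remaining grouped equations in \eqref{eq:equivalent of pi} are independent, so no hidden cancellations can occur; this is the one step worth double-checking, but it is essentially inherited from the proof of Lemma \ref{proto-twilled}.
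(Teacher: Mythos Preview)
Your proposal is correct and matches the paper's approach: the paper simply states this lemma as an immediate consequence of Lemma~\ref{proto-twilled} (the text ``By Lemma~\ref{proto-twilled}, we have'' precedes both this lemma and the twilled case), and your argument of setting $\hat{\phi}_2=0$ in \eqref{eq:equivalent of pi} is exactly the intended specialization.
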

		The $\lambda$-bracket on  the quasi-twilled Lie conformal algebra $(\A,\A_1,\A_2)$ is given by
		\begin{equation}\label{eq:quasi-twilled bracket}
		[(a,u)_\lambda (b,v)]=([a_\lambda b]_1+\rho_{2}(u)_\lambda b-\rho_{2}(v)_{-\lambda-\partial}a,[u_\lambda v]_2+\rho_{1}(a)_\lambda v-\rho_{1}(b)_{-\lambda-\partial}u+\phi_{1\lambda}(a,b)).
		\end{equation}
		Since $[\hat{\mu}_1,\hat{\mu}_1]_{\NR}$ is not zero in general, $(\A_1,[\cdot_\lambda\cdot]_1)$ is not a Lie conformal algebra. By \eqref{quasi-4}, $(\A_2,[\cdot_\lambda\cdot]_{2})$ is a Lie conformal algebra and $(\A_1;\rho_{2})$ is a module over $(\A_2,[\cdot_\lambda\cdot]_{2})$.

		\begin{coro} \label{pro:twisted semi-direct product}
			Let $(\A,[\cdot_\lambda\cdot])$ be a Lie conformal algebra and $(M;\rho)$  a module over
			$\A$. For any $2$-cocycle $\phi$ in $C^2(\A, M)$,  the $\C[\p]$-module $\A\oplus M$ carries a quasi-twilled Lie conformal algebra structure via
			\begin{align}\label{twist-sum}
			[(a,m)_\lambda(b,n)]^\phi=\big([a_\lambda b], \rho(a)_\lambda n-\rho(b)_{-\lambda-\p} m+\phi_\lambda(a,b)\big),
			\end{align}
			where $a,b\in\A$ and $m,n\in M.$ We called it the {\bf $\phi$-twisted semi-direct product of $\A$ and $M$}, denoted by $\A\ltimes_{\phi}M$.
		\end{coro}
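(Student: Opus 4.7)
The plan is to apply Lemma \ref{lem:quasi-twillL} to the decomposition $\A_1 := \A$, $\A_2 := M$. I would identify $\hat{\mu}_1$ as the lift encoding both the bracket $[\cdot_\lambda\cdot]$ on $\A$ and the action $\rho$ of $\A$ on $M$, identify $\hat{\phi}_1$ with the lift of the given $\phi$, and take $\hat{\mu}_2 = 0$ and $\hat{\phi}_2 = 0$. Setting $\Pi := \hat{\mu}_1 + \hat{\phi}_1$ and unpacking \eqref{bracket-1}--\eqref{bracket-3} under these identifications reproduces exactly the bracket \eqref{twist-sum}; moreover the quasi-twilled property $\phi_2=0$ is automatic from the construction. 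Thus it suffices to verify $[\Pi,\Pi]_\NR = 0$, equivalently the four conditions \eqref{quasi-1}--\eqref{quasi-4} of Lemma \ref{lem:quasi-twillL}.

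Because $\hat{\mu}_2=0$, conditions \eqref{quasi-3} and \eqref{quasi-4} are trivial, and \eqref{quasi-2} collapses to $\half[\hat{\mu}_1,\hat{\mu}_1]_\NR=0$. By Proposition \ref{pro:MC}, this is equivalent to $(M;\rho)$ being a module over $(\A,[\cdot_\lambda\cdot])$, which is one of the standing hypotheses of the corollary.

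The only remaining condition is \eqref{quasi-1}, that is, $[\hat{\mu}_1,\hat{\phi}_1]_\NR=0$. For this I would invoke \eqref{eq:CE-operator} together with Proposition \ref{pro:CE-operator}: under the identification $C^2(\A,M)\cong C^{2|-1}(\A\oplus M,\A\oplus M)$, the classical Lie conformal coboundary of $\phi$ with coefficients in $(M;\rho)$ satisfies $\dM\phi=-[\hat{\mu}_1,\hat{\phi}_1]_\NR$. So the hypothesis that $\phi$ is a $2$-cocycle delivers \eqref{quasi-1} directly, and the proof is complete.

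The main ``obstacle'' here is not conceptual but purely notational: one has to track which bidegree component of $C^*(\A\oplus M,\A\oplus M)$ encodes which classical object (bracket, action, cocycle), and be careful with the sign appearing in \eqref{eq:CE-operator}. Once this dictionary is fixed, the corollary is a direct assembly of Proposition \ref{pro:MC}, Proposition \ref{pro:CE-operator}, and Lemma \ref{lem:quasi-twillL}.
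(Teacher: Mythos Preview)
Your proposal is correct and is exactly the argument the paper intends: the corollary is stated without proof because it follows immediately from Lemma \ref{lem:quasi-twillL} once one makes the identifications $\A_1=\A$, $\A_2=M$, $\hat{\mu}_2=\hat{\phi}_2=0$, and then reads off \eqref{quasi-2} from Proposition \ref{pro:MC} and \eqref{quasi-1} from \eqref{eq:CE-operator}/Proposition \ref{pro:CE-operator}. Your handling of the sign in $\dM\phi=-[\hat{\mu}_1,\hat{\phi}_1]_{\NR}$ for $k=2$ is also correct.
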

		
		
		
		\begin{defi}{\rm (\cite{LS,LM})
				An {\bf $L_\infty$-algebra} is a graded vector space $\g=\bigoplus_{i\in\mathbb{N}}\g^i$ equipped with a collection  of multilinear maps $l_k:{\otimes^k}\g\rightarrow\g$ of degree $2-k$, satisfying
				\begin{itemize}	
					\item[(i)]Skew-symmetry: $l_k\big(v_{\sigma(1)}, \cdots, v_{\sigma(k)}\big)=\chi(\sigma)l_k(v_1, \cdots, v_k),$ for $k\ge 1$ and $\sigma\in S_k$. Here $\chi(\sigma)$ is the Koszul sign.
					\item[(ii)] Higher Jacobi identity: for $n\ge 1$,
					\begin{eqnarray*}\label{sh-Lie}
					\sum_{i+j=n+1}(-1)^{i}\sum_{{\sigma\in \perm_{(i,n-i)}} }\chi(\sigma)l_j(l_i(v_{\sigma(1)},\cdots,v_{\sigma(i)}),v_{\sigma(i+1)},\cdots,v_{\sigma(n)})=0,
					\end{eqnarray*}
					where $v_1,\cdots,v_n$ are homogeneous elements in $\g$.
				\end{itemize}}
			\end{defi}
			Let $(\g=\bigoplus_{i\in\mathbb{N}}\g^i,\{l_k\}_{k=1}^\infty)$ be an $L_\infty$-algebra. An element $\alpha\in \g^1$ is called an {\bf Maurer-Cartan element} if it satisfies the following {\bf Maurer-Cartan equation}:
			\begin{eqnarray*}\label{MC-equation}
			\sum_{k=1}^{+\infty}\frac{1}{k!}l_k(\alpha,\cdots,\alpha)=0.
			\end{eqnarray*}
			
			The following theorem says that a quasi-twilled Lie conformal algebra induces an $L_\infty$-algebra.
			\begin{theo}\label{quasi-as-shLie}
				Let $(\A,\A_1,\A_2, \Pi)$ be a quasi-twilled Lie conformal algebra with 
				$\Pi=\hat{\phi}_1+\hat{\mu}_1+\hat{\mu}_2$.  Define $d_{\hat{\mu}_2}:C^m(\A_2,\A_1)\rightarrow C^{m+1}(\A_2,\A_1)$, $[\cdot,\cdot]_{{\hat{\mu}_1}}:C^m(\A_2,\A_1)\times C^n(\A_2,\A_1)\rightarrow C^{m+n}(\A_2,\A_1)$ and $[\cdot,\cdot,\cdot]_{\hat{\phi}_1}:C^m(\A_2,\A_1)\times C^n(\A_2,\A_1)\times C^k(\A_2,\A_1)\rightarrow C^{m+n+k-1}(\A_2,\A_1)$ as follows:
				\begin{align}
				\label{eq:shLie1}d_{\hat{\mu}_2}(f_1)=&[\hat{\mu}_2,\hat{f}_1]_{\NR},\\
				\label{eq:shLie2}{[f_1,f_2]_{\hat{\mu}_1}}=&(-1)^{m-1}[[\hat{\mu}_1,\hat{f}_1]_{\NR},\hat{f}_2]_{\NR},\\		\label{eq:shLie3}{[f_1,f_2,f_3]}_{\hat{\phi}_1}=&(-1)^{n-1}[[[\hat{\phi}_1,\hat{f}_1]_{\NR},\hat{f}_2]_{\NR},\hat{f}_3]_{\NR},
				\end{align}
				for $f_1\in C^m(\A_2,\A_1),~f_2\in C^n(\A_2,\A_1)$, $f_3\in C^k(\A_2,\A_1).$ Then $(C^*(\A_2,\A_1),d_{\hat{\mu}_2},[\cdot,\cdot]_{{\hat{\mu}_1}},[\cdot,\cdot,\cdot]_{\hat{\phi}_1})$ is an $L_\infty$-algebra.
			\end{theo}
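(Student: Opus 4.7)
The proof is an application of Voronov's higher derived bracket construction to the graded Lie algebra $(C^*(\A,\A),[-,-]_{\NR})$, with $\Pi=\hat\phi_1+\hat\mu_1+\hat\mu_2$ as the Maurer--Cartan datum.

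The first step is to set up the derived bracket framework. Via the lift $f\mapsto\hat f$, the space $C^*(\A_2,\A_1)$ is identified with the subspace $\mathfrak{h}\subset C^*(\A,\A)$ consisting of cochains of bidegree $-1|*$; by Lemma \ref{lem:zero condition}, $\mathfrak{h}$ is an \emph{abelian} subalgebra of $(C^*(\A,\A),[-,-]_{\NR})$, i.e.\ $[\hat f_1,\hat f_2]_{\NR}=0$ for all $f_1,f_2\in C^*(\A_2,\A_1)$. Let $P$ denote the projection onto $\mathfrak{h}$. Since $\Pi$ satisfies $[\Pi,\Pi]_{\NR}=0$ by Lemma \ref{lem:quasi-twillL}, Voronov's theorem produces an $L_\infty$-structure on $\mathfrak{h}$ whose $k$-th bracket is the derived bracket
\[
l_k(f_1,\ldots,f_k)=P\bigl[\cdots\bigl[[\Pi,\hat f_1]_{\NR},\hat f_2\bigr]_{\NR},\ldots,\hat f_k\bigr]_{\NR}.
\]

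The second step is a bidegree analysis to read off the surviving operations. By Lemma \ref{lem:bidegree perserve}, each bracket with an $\hat f$ of bidegree $-1|m$ shifts the first coordinate of the bidegree by $-1$; so starting from a $\Pi$-component of first-bidegree $q$, the result after $k$ brackets lands in $\mathfrak{h}$ precisely when $q=k-1$. Since $\Pi$ has components of first-bidegrees $0,1,2$, this singles out $\hat\mu_2$ for $l_1$, $\hat\mu_1$ for $l_2$, $\hat\phi_1$ for $l_3$, and yields $l_k=0$ for $k\geq 4$. Matching the overall signs with the Voronov normalization reproduces exactly the formulas \eqref{eq:shLie1}--\eqref{eq:shLie3}.

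The third step is to verify that the higher Jacobi identities for $(l_1,l_2,l_3)$ are equivalent to the four equations \eqref{quasi-1}--\eqref{quasi-4}. Expanding each derived bracket via the graded Jacobi identity of $[-,-]_{\NR}$ and discarding terms containing $[\hat f_i,\hat f_j]_{\NR}=0$, the relations for $n=1,2,3,4$ reduce respectively to the bidegree components $[\hat\mu_2,\hat\mu_2]_{\NR}=0$, $[\hat\mu_1,\hat\mu_2]_{\NR}=0$, $\tfrac12[\hat\mu_1,\hat\mu_1]_{\NR}+[\hat\mu_2,\hat\phi_1]_{\NR}=0$, and $[\hat\mu_1,\hat\phi_1]_{\NR}=0$. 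The $n=5$ relation reduces to $[\hat\phi_1,\hat\phi_1]_{\NR}=0$, which is automatic from Lemma \ref{lem:zero condition} since $\hat\phi_1$ has bidegree $2|-1$. The main obstacle is the sign bookkeeping in this step: one must check that the Koszul signs appearing in the higher Jacobi identities match the coefficients produced by repeated application of the graded Jacobi identity to the brackets $[\hat\mu_i,\hat\mu_j]_{\NR}$ and $[\hat\mu_i,\hat\phi_1]_{\NR}$. This is tedious but routine; the entire argument is Voronov's, specialized to the graded Lie algebra of Lie conformal cochains under the Nijenhuis--Richardson bracket.
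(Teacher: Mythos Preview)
Your proposal is correct and follows essentially the same route as the paper: both identify $C^*(\A_2,\A_1)$ (via the lift) with the abelian subalgebra of bidegree $-1|*$ cochains inside $(C^*(\A,\A),[-,-]_{\NR})$, then invoke the derived bracket construction (the paper cites Uchino's \cite{Uch1}, you cite Voronov's; these are the same mechanism) together with the bidegree bookkeeping of Lemmas~\ref{lem:bidegree perserve} and~\ref{lem:zero condition} to see that only $l_1,l_2,l_3$ survive. Your third step, re-verifying the higher Jacobi identities against \eqref{quasi-1}--\eqref{quasi-4}, is redundant once you have appealed to the general derived bracket theorem, since $[\Pi,\Pi]_{\NR}=0$ already guarantees them; the one ingredient you gloss over is that $\ker P$ (the cochains of first bidegree $\geq 0$) is itself a graded Lie subalgebra, which is what makes the projection admissible, but this follows immediately from Lemma~\ref{lem:bidegree perserve}.
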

			\begin{proof}
				Set $d_0:=[\hat{\mu}_2,\cdot]_{\NR}$. By the graded Jacobi identity of $[\cdot,\cdot]_{\NR}$, $(C^*(\A,\A),[\cdot,\cdot]_{\NR},d_0)$ is a differential graded Lie algebra.  By Lemma \ref{lem:bidegree perserve}, the brackets on $sC^*(\A,\A)$ given by
				\begin{align*}
				d_{\hat{\mu}_2}(sf_1)=&s[\hat{\mu}_2,f_1]_{\NR},\\
				{[sf_1,sf_2]_{\hat{\mu}_1}}=&(-1)^{|f_1|}s[[\hat{\mu}_1,f_1]_{\NR},f_2]_{\NR},\\
				{[sf_1,sf_2,sf_3]}_{\hat{\phi}_1}=&(-1)^{|f_2|}s[[[\hat{\phi}_1,f_1]_{\NR},f_2]_{\NR},f_3]_{\NR},\\
				l_i=&0,\,\,\,\,i\ge4,
				\end{align*}
				are closed on $C^*(\A_2,\A_1)$, where $f_1,~f_2,~f_3\in C^*(\A_2,\A_1)$ and  $s:C^*(\A_2,\A_1)\rightarrow sC^*(\A_2,\A_1)$ is the  suspension operator by assigning  $C^*(\A_2,\A_1)$ to the graded vector space $sC^*(\A_2,\A_1)$ with $|sC^i(\A_2,\A_1)|:=i-1$. Thus $C^*(\A_2,\A_1)$ is an abelian subalgebra of the graded Lie algebra $(C^*(\A,\A),[\cdot,\cdot]_{\NR})$. The rest follows from \cite[Corollary 3.5]{Uch1}.
			\end{proof}
			
			\begin{coro}\label{pro:twilled GLA}
				Let $\A_1\bowtie\A_2$ be a twilled Lie conformal algebra with the Lie conformal algebra structure
				$\Pi=\hat{\mu}_1+\hat{\mu}_2$.. Then $(C^*(\A_2,\A_1),d_{\hat{\mu}_2},[\cdot,\cdot]_{{\hat{\mu}_1}})$ is a differential graded Lie algebra, where $d_{\hat{\mu}_2}$	and $[\cdot,\cdot]_{{\hat{\mu}_1}} $ are given by \eqref{eq:shLie1} and \eqref{eq:shLie2}, respectively.
			\end{coro}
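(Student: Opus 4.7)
The plan is to derive the corollary as a direct specialization of Theorem \ref{quasi-as-shLie} to the twilled case, where the extra structure $\hat{\phi}_1$ vanishes. Since a twilled Lie conformal algebra is by definition a quasi-twilled Lie conformal algebra with $\hat{\phi}_1=0$, Theorem \ref{quasi-as-shLie} already produces an $L_\infty$-structure $(d_{\hat{\mu}_2},[\cdot,\cdot]_{\hat{\mu}_1},[\cdot,\cdot,\cdot]_{\hat{\phi}_1})$ on $C^*(\A_2,\A_1)$. From \eqref{eq:shLie3} one immediately reads $[\cdot,\cdot,\cdot]_{\hat{\phi}_1}=0$, and all higher brackets are zero by construction, so the $L_\infty$-structure collapses to a (shifted) differential graded Lie algebra. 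Thus the proof amounts to checking that, with $\hat{\phi}_1$ absent, the surviving data $(d_{\hat{\mu}_2},[\cdot,\cdot]_{\hat{\mu}_1})$ satisfies the three DGLA axioms on the nose.

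Concretely, I would verify the axioms by unpacking the definitions \eqref{eq:shLie1}--\eqref{eq:shLie2} and using only the graded Jacobi identity of $[\cdot,\cdot]_{\NR}$ together with the three identities \eqref{twilled-1}--\eqref{twilled-3} of Lemma \ref{lem:twillL}. First, for $f\in C^m(\A_2,\A_1)$ one has
\begin{equation*}
d_{\hat{\mu}_2}^2(f)=[\hat{\mu}_2,[\hat{\mu}_2,\hat{f}]_{\NR}]_{\NR}=\tfrac{1}{2}[[\hat{\mu}_2,\hat{\mu}_2]_{\NR},\hat{f}]_{\NR}=0
\end{equation*}
by the graded Jacobi identity and \eqref{twilled-3}. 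Second, graded skew-symmetry and the graded Jacobi identity of $[\cdot,\cdot]_{\hat{\mu}_1}$ follow from the corresponding properties of $[\cdot,\cdot]_{\NR}$ together with $[\hat{\mu}_1,\hat{\mu}_1]_{\NR}=0$ from \eqref{twilled-1}: unwinding \eqref{eq:shLie2}, the obstruction to Jacobi is proportional to nested brackets that reduce, via graded Jacobi, to a term containing $[\hat{\mu}_1,\hat{\mu}_1]_{\NR}$. Third, the derivation property
\begin{equation*}
d_{\hat{\mu}_2}[f_1,f_2]_{\hat{\mu}_1}=[d_{\hat{\mu}_2}f_1,f_2]_{\hat{\mu}_1}+(-1)^{|f_1|}[f_1,d_{\hat{\mu}_2}f_2]_{\hat{\mu}_1}
\end{equation*}
is obtained by applying graded Jacobi to expand $[\hat{\mu}_2,[[\hat{\mu}_1,\hat{f}_1]_{\NR},\hat{f}_2]_{\NR}]_{\NR}$, collecting a term proportional to $[\hat{\mu}_1,\hat{\mu}_2]_{\NR}$, which vanishes by \eqref{twilled-2}, and rearranging the remaining terms.

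One technical point to keep straight is that all brackets above live on $C^*(\A,\A)$ via the lifting $\,\hat{}\,$, whereas the DGLA structure is stated on $C^*(\A_2,\A_1)\subset C^*(\A,\A)$. By Lemma \ref{lem:bidegree perserve}, lifts of elements in $C^*(\A_2,\A_1)\cong C^{*|-1}(\A_1\oplus\A_2,\A_1\oplus\A_2)$ are closed under the brackets $[\hat{\mu}_2,\cdot]_{\NR}$ and $[[\hat{\mu}_1,\cdot]_{\NR},\cdot]_{\NR}$ since the resulting bidegrees remain of the form $*|-1$; hence the three operations are well-defined on $C^*(\A_2,\A_1)$.

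The only place where anything is genuinely at stake is bookkeeping the degree shifts and Koszul signs when transporting the verification between $C^*(\A,\A)$ (where $[\cdot,\cdot]_{\NR}$ is the standard graded Lie bracket of degree zero) and $C^*(\A_2,\A_1)$ (where $[\cdot,\cdot]_{\hat{\mu}_1}$ is of degree zero but obtained from a doubly-nested bracket). I would handle this exactly as in the proof of Theorem \ref{quasi-as-shLie}, passing through the suspended space $sC^*(\A_2,\A_1)$ so that the derived brackets acquire the correct signs, and then invoking the derived bracket formalism of Uchino \cite[Corollary 3.5]{Uch1} specialized to the abelian subalgebra $C^*(\A_2,\A_1)\subset C^*(\A,\A)$. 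No new computation beyond Theorem \ref{quasi-as-shLie} is needed; the corollary is just the restriction of that $L_\infty$-algebra to its non-trivial binary and unary components.
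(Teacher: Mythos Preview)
Your proposal is correct and takes essentially the same approach as the paper: the corollary is stated in the paper without proof, being an immediate specialization of Theorem \ref{quasi-as-shLie} to the case $\hat{\phi}_1=0$, which is precisely what you do. Your additional direct verification of the DGLA axioms via \eqref{twilled-1}--\eqref{twilled-3} and the graded Jacobi identity is more detail than the paper provides, but it is accurate and follows the standard derived-bracket reasoning underlying Theorem \ref{quasi-as-shLie}.
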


			\section{Twisting on Lie conformal algebras and  relative Rota-Baxter type operators}
			In this section, we study twisting theory of Lie conformal algebras and characterize relative Rota-Baxter type operators as  Maurer-Cartan elements of a suitable $L_\infty$-algebra.
			
			Let $(\A,\Pi) $ be a Lie conformal algebra with a decomposition into the direct sum of  two $\C[\partial]$-modules $\A_1$ and $\A_2$ and 
			$\Pi=\hat{\phi}_1+\hat{\mu}_1+\hat{\mu}_2+\hat{\phi}_2$.
			Let $\hat{H}$ be the lift of a $\C[\partial]$-module homomorphism $H:\A_2\rightarrow\A_1$. Notice that the bidegree of $H$ is $-1|1$. For the operator $X_{\hat{H}}:=[\cdot,\hat{H}]_{\NR}$,  we define
			$$e^{X_{\hat{H}}}(\cdot):={\Id}+X_{\hat{H}}+\frac{1}{2!}X^2_{\hat{H}}+\frac{1}{3!}X^3_{\hat{H}}+\cdots,$$
			where $X^2_{\hat{H}}:=[[\cdot,\hat{H}]_{\NR},\hat{H}]_{\NR}$ and $X_{\hat{H}}^n$ for $n\geq 3$ is defined similarly. As $\hat{H}\circ \hat{H}=0$, the operator $e^{X_{\hat{H}}}$ is well-defined.
			
			\begin{defi}{\rm
					The transformation $\Pi^{H}:=e^{X_{\hat{H}}}(\Pi)$ is called a {\bf twisting} of $\Pi$ by $H$.}
			\end{defi}
			
			The following lemma is useful.
			\begin{lemm}\label{lem:twist}
				\begin{itemize}
					\item[{\rm (i)}] $\Pi^{H}=\Pi+[\Pi,\hat{H}]_{\NR}+\half[[\Pi,\hat{H}]_{\NR},\hat{H}]_{\NR}+\frac{1}{6}[[[\Pi,\hat{H}]_{\NR},\hat{H}]_{\NR},\hat{H}]_{\NR}$;
					\item[{\rm (ii)}] 	$\Pi_\lambda^{H}=e^{-\hat{H}}\circ \Pi_\lambda\circ (e^{\hat{H}}\otimes e^{\hat{H}})$.
				\end{itemize}
			\end{lemm}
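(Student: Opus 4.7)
The plan is to prove (i) by a bidegree truncation argument and then derive (ii) by an explicit expansion that exploits the nilpotency $\hat H\circ\hat H=0$.

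For (i), I would invoke the bidegree calculus from Subsection 3.1. Since $H\colon\A_2\to\A_1$, the lift $\hat H$ has bidegree $-1|1$, so by Lemma \ref{lem:bidegree perserve} the operator $X_{\hat H}=[\cdot,\hat H]_{\NR}$ shifts every bidegree by $-1|1$. The Lie conformal algebra structure decomposes as $\Pi=\hat\phi_1+\hat\mu_1+\hat\mu_2+\hat\phi_2$ with bidegrees $2|-1,\,1|0,\,0|1,\,-1|2$ by Lemma \ref{lem:decomposition}. Applying $X_{\hat H}^n$ therefore yields components whose first bidegree index is at most $2-n$; for $n\ge 4$ this index is $\le -2$, and by the very definition of bidegree no nonzero cochain can have first index below $-1$. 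Hence $X^n_{\hat H}(\Pi)=0$ for all $n\ge 4$, and the exponential series $e^{X_{\hat H}}(\Pi)$ truncates to the four-term expression in (i).

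For (ii), first observe that $\hat H\circ\hat H=0$, because $\hat H$ sends $\A_1\oplus\A_2$ into $\A_1\oplus 0$ and vanishes on $\A_1$; thus $e^{\hat H}=\Id+\hat H$ and $e^{-\hat H}=\Id-\hat H$, and the right-hand side expands into the eight terms
\begin{align*}
&\Pi_\lambda(x,y)+\Pi_\lambda(\hat Hx,y)+\Pi_\lambda(x,\hat Hy)+\Pi_\lambda(\hat Hx,\hat Hy)\\
&\qquad -\hat H\Pi_\lambda(x,y)-\hat H\Pi_\lambda(\hat Hx,y)-\hat H\Pi_\lambda(x,\hat Hy)-\hat H\Pi_\lambda(\hat Hx,\hat Hy).
\end{align*}
To match this with the left-hand side I would first derive the explicit formula
$$[\Pi,\hat H]_{\NR,\lambda}(x,y)=\Pi_\lambda(\hat Hx,y)+\Pi_\lambda(x,\hat Hy)-\hat H\Pi_\lambda(x,y)$$
by unpacking the definition of the NR bracket: the two $(1,1)$-unshuffles in $\Pi\diamond\hat H$ contribute $\Pi_\lambda(\hat Hx,y)$ and $-\Pi_{-\lambda-\partial}(\hat Hy,x)$, and the 2-cochain skew-symmetry \eqref{eq:coboundary3} of $\Pi$ converts the latter into $\Pi_\lambda(x,\hat Hy)$, while $\hat H\diamond\Pi$ produces $\hat H\Pi_\lambda(x,y)$. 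Iterating this formula and inserting $\hat H\circ\hat H=0$ at each step, the quadratic and cubic brackets collapse to
$$\half[[\Pi,\hat H]_{\NR},\hat H]_{\NR,\lambda}(x,y)=\Pi_\lambda(\hat Hx,\hat Hy)-\hat H\Pi_\lambda(\hat Hx,y)-\hat H\Pi_\lambda(x,\hat Hy),$$
$$\tfrac{1}{6}[[[\Pi,\hat H]_{\NR},\hat H]_{\NR},\hat H]_{\NR,\lambda}(x,y)=-\hat H\Pi_\lambda(\hat Hx,\hat Hy),$$
and summing with $\Pi_\lambda(x,y)$ reproduces the eight-term expansion of the right-hand side.

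The main obstacle will be the careful handling of conformal $\lambda$-parameters in the NR bracket formula, in particular identifying the two unshuffle contributions by means of the 2-cochain skew-symmetry \eqref{eq:coboundary3}. Once the clean formula for $[\Pi,\hat H]_{\NR}$ is in hand, the remaining iterations reduce, thanks to $\hat H\circ\hat H=0$, to a short combinatorial bookkeeping.
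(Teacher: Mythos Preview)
Your proposal is correct and follows essentially the same approach as the paper. For (i) the paper also invokes the bidegree shift under $X_{\hat H}$ (Lemmas~\ref{lem:bidegree perserve} and~\ref{lem:zero condition}) to truncate the exponential, and for (ii) it computes the three iterated NR brackets exactly as you do, obtaining the same eight-term expansion, and then matches it against $({\Id}-\hat H)\circ\Pi_\lambda\circ(({\Id}+\hat H)\otimes({\Id}+\hat H))$.
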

			\begin{proof}
				(i) Since $||H||=-1|1$, $X^i_{\hat{H}}(\Pi)=0$ for $i\geq 4$ by Lemmas \ref{lem:bidegree perserve} and \ref{lem:zero condition}. This proves (i).
				
				(ii) For $a_1,a_2\in\A_1,v_1,v_2\in \A_2$, we compute separately
				\begin{align*}
				&([\Pi,\hat{H}]_{\NR})_\lambda\big((a_1,v_1),(a_2,v_2)\big)\\
				=&\Pi_\lambda((H(v_1),0),(a_2,v_2))+\Pi_\lambda((a_1,v_1),(H(v_2),0))-\hat{H}(\Pi_\lambda((a_1,v_1),(a_2,v_2)));\\
				&	([[\Pi,\hat{H}]_{\NR},\hat{H}]_{\NR})_\lambda\big((a_1,v_1),(a_2,v_2)\big)\\
				=&2\Pi_\lambda((H(v_1),0),(H(v_2),0))-2\hat{H}\Pi_\lambda((H(v_1),0),(a_2,v_2))-2\hat{H}\Pi_\lambda((a_1,v_1),(H(v_2),0));\\	
				&([[[\pi,\hat{H}]_{\NR},\hat{H}]_{\NR},\hat{H}]_{\NR})_\lambda\big((a_1,v_1),(a_2,v_2)\big)
				=-6\hat{H}\Pi_\lambda((H(v_1),0),(H(v_2),0)).
				\end{align*}
				By assertion (i), we have
				\begin{align*} \Pi_\lambda^{H}=&\Pi_\lambda-\hat{H}\circ\Pi_\lambda+\Pi_\lambda\circ(\hat{H}\otimes{\Id})+\Pi_\lambda\circ({\Id}\otimes\hat{H})-\hat{H}\circ\Pi_\lambda\circ({\Id}\otimes\hat{H})\\	\nonumber&-\hat{H}\circ\Pi_\lambda\circ(\hat{H}\otimes{\Id})+\Pi_\lambda\circ(\hat{H}\otimes\hat{H})-\hat{H}\circ\Pi_\lambda\circ(\hat{H}\otimes\hat{H}).
				\end{align*}
				On the other hand, by $\hat{H}\circ\hat{H}=0$, we have
				\begin{align*}
				e^{-\hat{H}}\circ \Pi_\lambda \circ (e^{\hat{H}}\otimes e^{\hat{H}})=&({\Id}-\hat{H})\circ\Pi_\lambda\circ (({\Id}+\hat{H})\otimes({\Id}+\hat{H}))\\
				=&\Pi_\lambda+\Pi_\lambda\circ({\Id}\otimes\hat{H})+\Pi_\lambda\circ(\hat{H}\otimes{\Id})+\Pi_\lambda\circ(\hat{H}\otimes\hat{H})-\hat{H}\circ\Pi_\lambda\\	&-\hat{H}\circ\Pi_\lambda\circ({\Id}\otimes\hat{H})-\hat{H}\circ\Pi_\lambda\circ(\hat{H}\otimes{\Id})-\hat{H}\circ\Pi_\lambda\circ(\hat{H}\otimes\hat{H}).
				\end{align*}
				This proves assertion (ii).
			\end{proof}

			\begin{prop}\label{pro:main twisting prop1}
				The twisting $\Pi^{H}$ of $\Pi$ is a  Lie conformal algebra structure on $\A$, namely, $[\Pi^{H},\Pi^{H}]_{\NR}=0.$ Moreover,
				$e^{\hat{H}}:(\A,\Pi^{H})\rightarrow(\A,\Pi)$ is a Lie conformal algebra isomorphism.
			\end{prop}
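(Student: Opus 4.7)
The plan is to derive both statements from Lemma \ref{lem:twist}(ii), treating the twisting as genuine conjugation by the $\C[\partial]$-module automorphism $e^{\hat{H}}$.

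First I would record the basic properties of $\hat{H}$. Being the lift of the $\C[\partial]$-module homomorphism $H:\A_2\to\A_1$, the cochain $\hat{H}$ has bidegree $-1|1$, is $\C[\partial]$-linear, and acts as $\hat{H}(a,v)=(H(v),0)$ for $a\in\A_1$, $v\in\A_2$. In particular $\hat{H}\circ\hat{H}=0$, because its image lies in $\A_1$ while $\hat{H}$ annihilates $\A_1$. Consequently $e^{\hat{H}}=\Id+\hat{H}$ is a $\C[\partial]$-module automorphism of $\A$ with inverse $e^{-\hat{H}}=\Id-\hat{H}$.

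Next, Lemma \ref{lem:twist}(ii) rearranges to $e^{\hat{H}}\circ\Pi^H_\lambda=\Pi_\lambda\circ(e^{\hat{H}}\otimes e^{\hat{H}})$, which is exactly the defining identity for $e^{\hat{H}}:(\A,\Pi^H)\to(\A,\Pi)$ to be a homomorphism of $\lambda$-brackets. Together with the $\C[\partial]$-linearity and invertibility of $e^{\hat{H}}$ just noted, this immediately yields the second assertion, provided we know that $\Pi^H$ is itself a Lie conformal algebra structure. But this follows by transport of structure: conformal sesquilinearity, skew-symmetry and the Jacobi identity for $\Pi$ are all preserved under conjugation by any $\C[\partial]$-linear bijection, so $\Pi^H=e^{-\hat{H}}\circ\Pi\circ(e^{\hat{H}}\otimes e^{\hat{H}})$ satisfies them too; equivalently, $[\Pi^H,\Pi^H]_{\NR}=0$.

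An alternative, more abstract route is to observe that $X_{\hat{H}}=[\cdot,\hat{H}]_{\NR}$ is a graded derivation of $(C^*(\A,\A),[\cdot,\cdot]_{\NR})$ by the graded Jacobi identity, hence $e^{X_{\hat{H}}}$ is a graded Lie algebra automorphism; applying it to $[\Pi,\Pi]_{\NR}=0$ gives $[\Pi^H,\Pi^H]_{\NR}=0$ directly. Convergence of the exponential is not an issue, since Lemmas \ref{lem:bidegree perserve} and \ref{lem:zero condition} force $X^n_{\hat{H}}(\Pi)=0$ for $n\ge 4$ on bidegree grounds (this is visible already in the finite expansion recorded in Lemma \ref{lem:twist}(i)). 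No real obstacle arises; the only minor subtlety is keeping track of signs in the derivation property, which is precisely why the transport-of-structure argument above is cleaner.
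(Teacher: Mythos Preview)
Your proposal is correct and follows essentially the same approach as the paper: both hinge on Lemma \ref{lem:twist}(ii), i.e.\ the conjugation formula $\Pi^{H}_\lambda=e^{-\hat H}\circ\Pi_\lambda\circ(e^{\hat H}\otimes e^{\hat H})$, and deduce everything from it. The only cosmetic difference is that the paper writes the key step out explicitly as
\[
(\Pi^{H}\diamond\Pi^{H})_{\lambda_1,\lambda_2}=e^{-\hat H}\circ(\Pi\diamond\Pi)_{\lambda_1,\lambda_2}\circ(e^{\hat H}\otimes e^{\hat H}\otimes e^{\hat H}),
\]
whereas you phrase the same computation as ``transport of structure''; your alternative derivation-exponential argument is also fine and is a standard variant of the same idea.
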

			\begin{proof} By Lemma \ref{lem:twist}, we have
				\begin{align*}	([\Pi^{H},\Pi^{H}]_{\NR})_{\lambda_1,\lambda_2}=2(\Pi^{H}\diamond\Pi^{H})_{\lambda_1,\lambda_2}=&2e^{-\hat{H}}\circ(\Pi\diamond\Pi)_{\lambda_1,\lambda_2}\circ(e^{\hat{H}}\otimes e^{\hat{H}}\otimes e^{\hat{H}})\\
				=&e^{-\hat{H}}\circ([\Pi,\Pi]_{\NR})_{\lambda_1,\lambda_2}\circ(e^{\hat{H}}\otimes e^{\hat{H}}\otimes e^{\hat{H}})=0.
				\end{align*}
				The second claim follows directly.
			\end{proof}

			Since $\Pi^{H}$ is a $2$-cochain, $\Pi^H$ is also decomposed into the unique four substructures with respect to the bidegrees. The relations between $\Pi^H$ and $\Pi$ are given by the following theorem.
			\begin{theo}\label{thm:twist}
				Assume that $\Pi=\hat{\phi}_1+\hat{\mu}_1+\hat{\mu}_2+\hat{\phi}_2$. Then $\Pi^{H}=\hat{\phi}_1^{H}+\hat{\mu}_1^{H}+\hat{\mu}_2^{H}+\hat{\phi}_2^{H}$, where
				\begin{align}
				\label{twisting-1}\hat{\phi}_1^{H}=&\hat{\phi}_1,\\
				\label{twisting-2}\hat{\mu}_1^{H}=&\hat{\mu}_1+[\hat{\phi}_1,\hat{H}]_{\NR},\\
				\label{twisting-3}\hat{\mu}_2^{H}=&\hat{\mu}_2+d_{\hat{\mu}_1}\hat{H}+\half[[\hat{\phi}_1,\hat{H}]_{\NR},\hat{H}]_{\NR},\\ \label{twisting-4}\hat{\phi}_2^{H}=&\hat{\phi}_2+d_{\hat{\mu}_2}\hat{H}+\half[\hat{H},\hat{H}]_{\hat{\mu}_1}+\frac{1}{6}[[[\hat{\phi}_1,\hat{H}]_{\NR},\hat{H}]_{\NR},\hat{H}]_{\NR},
				\end{align}
				where $d_{\hat{\mu}_i}:=[\hat{\mu}_i,-]_{\NR}~(i=1,2)$ and $[\hat{H},\hat{H}]_{\hat{\mu}_1}:=[[\hat{\mu}_1,\hat{H}]_{\NR},\hat{H}]_{\NR}$.
			\end{theo}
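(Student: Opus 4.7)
The plan is to expand $\Pi^{H}$ using Lemma \ref{lem:twist}(i), substitute the decomposition $\Pi=\hat{\phi}_1+\hat{\mu}_1+\hat{\mu}_2+\hat{\phi}_2$ into each nested bracket, and then group the resulting terms according to bidegree. The decomposition asserted by the theorem is nothing more than the bidegree-homogeneous components of
\[
\Pi^{H}=\Pi+[\Pi,\hat{H}]_{\NR}+\tfrac{1}{2}[[\Pi,\hat{H}]_{\NR},\hat{H}]_{\NR}+\tfrac{1}{6}[[[\Pi,\hat{H}]_{\NR},\hat{H}]_{\NR},\hat{H}]_{\NR},
\]
matched against $\hat{\phi}_1^H+\hat{\mu}_1^H+\hat{\mu}_2^H+\hat{\phi}_2^H$, which are the unique bidegree $2|-1$, $1|0$, $0|1$, $-1|2$ pieces of $\Pi^H$ guaranteed by Lemma \ref{lem:decomposition}.

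The key bookkeeping tool is Lemma \ref{lem:bidegree perserve}: since $\|\hat{H}\|=-1|1$, applying the operator $X_{\hat{H}}=[\,\cdot\,,\hat{H}]_{\NR}$ shifts bidegree by $(-1,+1)$. Therefore $X_{\hat{H}}^{j}(\hat{\phi}_1)$ has bidegree $(2-j)\,|\,(-1+j)$, $X_{\hat{H}}^{j}(\hat{\mu}_1)$ has bidegree $(1-j)\,|\,j$, $X_{\hat{H}}^{j}(\hat{\mu}_2)$ has bidegree $(-j)\,|\,(1+j)$, and $X_{\hat{H}}(\hat{\phi}_2)$ has bidegree $-2|3$. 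First I would invoke Lemma \ref{lem:zero condition} (together with the observation that any homogeneous cochain whose first bidegree entry is at most $-2$ must vanish, since $\hat{H}$ has bidegree with first entry $-1$ and brackets of such objects are zero) to conclude that all iterates not appearing in the claimed formulas are zero. Concretely, $X_{\hat{H}}^{4}(\hat{\phi}_1)$, $X_{\hat{H}}^{3}(\hat{\mu}_1)$, $X_{\hat{H}}^{2}(\hat{\mu}_2)$, and $X_{\hat{H}}(\hat{\phi}_2)$ all vanish, so only finitely many nonzero terms appear at each step of the exponential.

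Next I would collect the surviving terms by bidegree: the only term of bidegree $2|-1$ is $\hat{\phi}_1$ itself (since no positive power of $X_{\hat{H}}$ preserves bidegree $2|-1$), giving \eqref{twisting-1}; bidegree $1|0$ receives contributions from $\hat{\mu}_1$ and $X_{\hat{H}}(\hat{\phi}_1)$, yielding \eqref{twisting-2}; bidegree $0|1$ receives $\hat{\mu}_2$, $X_{\hat{H}}(\hat{\mu}_1)$, and $\tfrac{1}{2}X_{\hat{H}}^{2}(\hat{\phi}_1)$; and bidegree $-1|2$ receives $\hat{\phi}_2$, $X_{\hat{H}}(\hat{\mu}_2)$, $\tfrac{1}{2}X_{\hat{H}}^{2}(\hat{\mu}_1)$, and $\tfrac{1}{6}X_{\hat{H}}^{3}(\hat{\phi}_1)$. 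Using the definitions $d_{\hat{\mu}_i}\hat{H}=[\hat{\mu}_i,\hat{H}]_{\NR}$ and $[\hat{H},\hat{H}]_{\hat{\mu}_1}=[[\hat{\mu}_1,\hat{H}]_{\NR},\hat{H}]_{\NR}$ introduced in the statement, these groupings are exactly \eqref{twisting-3} and \eqref{twisting-4}.

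The proof is essentially this bidegree accounting, so the main obstacle is not conceptual but organizational: one must carefully check at each stage which iterated brackets survive. The cleanest way to justify the vanishings in a single stroke is to state and use the lemma that $X_{\hat{H}}^{j}$ applied to a homogeneous cochain $f$ of bidegree $k|l$ is zero as soon as $j>k+1$ (because the output would need first entry $\leq -2$, but the NR bracket with $\hat{H}$ of anything already at first entry $-1$ vanishes by Lemma \ref{lem:zero condition}). Once this is in hand, the uniqueness clause of Lemma \ref{lem:decomposition} identifies the bidegree-homogeneous parts and the theorem follows.
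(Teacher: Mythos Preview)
Your proposal is correct and follows exactly the same approach as the paper, which dispatches the theorem in a single sentence by invoking the bidegree decomposition together with Lemmas \ref{lem:bidegree perserve} and \ref{lem:zero condition}. Your write-up simply spells out the bidegree accounting that the paper leaves implicit, and the termwise groupings you give are precisely the ones needed.
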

			\begin{proof}
				By the decomposition of bidegree and Lemmas \ref{lem:bidegree perserve} and \ref{lem:zero condition}, the theorem follows.
			\end{proof}
			\subsection{The case of twilled Lie conformal  algebras}
			Let $(\A_1\bowtie\A_2,\Pi)$ be a twilled Lie conformal algebra with
			$\Pi=\hat{\mu}_1+\hat{\mu}_2$. The twisted structure $\Pi^H=\hat{\mu}_1^H+\hat{\mu}_2^{H}+\hat{\phi}_2^{H}$ by $H:\A_2\rightarrow \A_1$ is given by
			\begin{eqnarray*}
			\label{twilled-twisting-1}\hat{\mu}_1^{H}&=&\hat{\mu}_1,\\
			\label{twilled-twisting-2}\hat{\mu}_2^{H}&=&\hat{\mu}_2+d_{\hat{\mu}_1}\hat{H},\\
			\label{twilled-twisting-3}\hat{\phi}_2^{H}&=&d_{\hat{\mu}_2}\hat{H}+\half[\hat{H},\hat{H}]_{\hat{\mu}_1}.
			\end{eqnarray*}
			We have shown in Corollary \ref{pro:twilled GLA} that there is a differential graded Lie algebra structure  associated to $(\A_1\bowtie\A_2,\Pi)$. Now we show that the Maurer-Cartan element of this differential graded Lie algebra can give a new twilled Lie conformal algebra by the twisting transformation.
			\begin{prop}\label{twisting-twilled}
				Let $(\A_1\bowtie\A_2,\Pi)$ be a  twilled   Lie conformal algebra and $H:\A_2\longrightarrow\A_1$ a $\C[\partial]$-module homomorphism. Then $(\A_1\bowtie\A_2,\Pi^{H})$ is a twilled Lie conformal algebra if and only if $H$ is a  Maurer-Cartan element of  the differential graded Lie algebra $(C^*(\A_2,\A_1),d_{\hat{\mu}_2},[-,-]_{\hat{\mu}_1})$  given in Corollary \ref{pro:twilled GLA}, i.e.
				\begin{equation*}\label{eq:Twist-Twilled-MC}
				d_{\hat{\mu}_2}\hat{H}+\half[\hat{H},\hat{H}]_{\hat{\mu}_1}=0.
				\end{equation*}
				This is equivalent to
				\begin{equation*}
				\begin{split}\label{eq:MC-expression}
				&[H(u)_\lambda H(v)]_1+\rho_{2}(u)_\lambda H(v)-\rho_{2}(v)_{-\lambda-\partial}H(u)\\
				&\quad\quad\quad\quad\quad\quad\quad\quad\quad=H\big(\rho_{1}(H(u))_\lambda v-\rho_{1}(H(v))_{-\lambda-\partial}u\big)+H([u_\lambda v]_2),~\forall u,v\in\A_2.
				\end{split}
				\end{equation*}
			\end{prop}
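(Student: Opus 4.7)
The plan is to reduce the statement to a direct application of Theorem \ref{thm:twist} together with the characterisation of twilled Lie conformal algebras in Lemma \ref{lem:twillL}. Since $\A_1\bowtie\A_2$ is twilled, we have $\hat{\phi}_1=\hat{\phi}_2=0$ in the decomposition $\Pi=\hat{\mu}_1+\hat{\mu}_2$. Specialising the transformation formulas \eqref{twisting-1}--\eqref{twisting-4} to this case yields
\[
\hat{\phi}_1^H=0,\qquad \hat{\mu}_1^H=\hat{\mu}_1,\qquad \hat{\mu}_2^H=\hat{\mu}_2+d_{\hat{\mu}_1}\hat{H},\qquad \hat{\phi}_2^H=d_{\hat{\mu}_2}\hat{H}+\tfrac{1}{2}[\hat{H},\hat{H}]_{\hat{\mu}_1}.
\]

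By Proposition \ref{pro:main twisting prop1}, $\Pi^H$ is already a Lie conformal algebra structure, so by Lemma \ref{lem:twillL} the triple $(\A_1\oplus\A_2,\A_1,\A_2)$ equipped with $\Pi^H$ is twilled if and only if the two off-diagonal substructures $\hat{\phi}_1^H$ and $\hat{\phi}_2^H$ vanish. The vanishing of $\hat{\phi}_1^H$ is automatic, so the twilled condition collapses to $\hat{\phi}_2^H=0$, which is precisely the Maurer-Cartan equation in the differential graded Lie algebra $(C^*(\A_2,\A_1),d_{\hat{\mu}_2},[-,-]_{\hat{\mu}_1})$ supplied by Corollary \ref{pro:twilled GLA}. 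This establishes the first equivalence.

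For the explicit reformulation I would evaluate $\hat{\phi}_2^H$ on a pair of pure $\A_2$-inputs $\big((0,u),(0,v)\big)$ and project onto $\A_1$, which is exactly the information it carries since its bidegree is $-1|2$. Using the formulas \eqref{bracket-2}--\eqref{bracket-3} for $\hat{\mu}_1,\hat{\mu}_2$ and the rule $\hat{H}(a,v)=(H(v),0)$, the term $d_{\hat{\mu}_2}\hat{H}=[\hat{\mu}_2,\hat{H}]_{\NR}$ unfolds to $\rho_{2}(u)_\lambda H(v)-\rho_{2}(v)_{-\lambda-\partial}H(u)-H([u_\lambda v]_2)$, while $\tfrac{1}{2}[\hat{H},\hat{H}]_{\hat{\mu}_1}=\tfrac{1}{2}[[\hat{\mu}_1,\hat{H}]_{\NR},\hat{H}]_{\NR}$ unfolds to $[H(u)_\lambda H(v)]_1-H\bigl(\rho_{1}(H(u))_\lambda v-\rho_{1}(H(v))_{-\lambda-\partial}u\bigr)$. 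Setting the sum to zero and rearranging gives the displayed identity.

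The main obstacle is the careful bookkeeping in the nested double bracket $[[\hat{\mu}_1,\hat{H}]_{\NR},\hat{H}]_{\NR}$: tracking signs, unshuffles, and the substitution $\lambda_k\mapsto\lambda_k^{\dag}$ across two successive applications of the NR-bracket definition. Bidegree considerations via Lemma \ref{lem:bidegree perserve} are indispensable here for discarding the many vanishing tensor components, so that only the single surviving $\A_2\otimes\A_2\to\A_1$ piece needs to be written out explicitly and matched against the asserted formula.
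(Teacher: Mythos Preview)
Your proposal is correct and follows essentially the same approach as the paper. The paper's proof simply writes down the two explicit expressions for $d_{\hat{\mu}_2}\hat{H}(u,v)$ and $[\hat{H},\hat{H}]_{\hat{\mu}_1}(u,v)$ and sets their combination to zero, taking the first equivalence (twilled $\Leftrightarrow \hat{\phi}_2^H=0$) as understood from the formulas displayed just before the proposition; you make that step explicit by invoking Theorem~\ref{thm:twist}, Lemma~\ref{lem:twillL}, and Proposition~\ref{pro:main twisting prop1}, but the substance is identical.
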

			
			\begin{proof}A direct calculation gives
				\begin{align*}
				d_{\hat{\mu}_2}(\hat{H})(u,v)=&\rho_{2}(u)_\lambda H(v)-\rho_{2}(v)_{-\lambda-\partial}H(u)-H([u_\lambda v]_2),\\
				{[\hat{H},\hat{H}]_{\hat{\mu}_1}}(u,v)=&2H\big(\rho_{1}(H(u))_\lambda v-\rho_{1}(H(v))_{-\lambda-\partial}u\big)-2  [H(u)_\lambda H(v)]_1.
				\end{align*}
				Thus
				\begin{align*}
				d_{\hat{\mu}_2}\hat{H}(u,v)+\half[\hat{H},\hat{H}]_{\hat{\mu}_1}(u,v)=&\rho_{2}(u)_\lambda H(v)-\rho_{2}(v)_{-\lambda-\partial}H(u)-H([u_\lambda v]_2)\\
				&+H\big(\rho_{1}(H(u))_\lambda v-\rho_{1}(H(v))_{-\lambda-\partial}u\big)- [H(u)_\lambda H(v)]_1=0.
				\end{align*}
			\end{proof}
			
			\begin{coro}\label{cor:Lie and O-operator}
				Let $(\A_1\bowtie\A_2,\Pi)$ be a  twilled   Lie conformal algebra and $H$ a Maurer-Cartan element of the associated differential graded Lie algebra. Then we have a Lie conformal algebra structure on $\A_2$ given by
				\begin{eqnarray}\label{eq:mul1}
				[u_\lambda v]_H:=\rho_{1}(H(u))_\lambda v-\rho_{1}(H(v))_{-\lambda-\partial}u+[u_\lambda v]_2,~~\mbox{for}~u,v\in\A_2.
				\end{eqnarray}
			\end{coro}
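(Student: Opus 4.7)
The plan is to deduce this corollary directly from Proposition \ref{twisting-twilled}, together with the explicit conjugation formula in Lemma \ref{lem:twist}(ii). Since $H$ is a Maurer--Cartan element of $(C^*(\A_2,\A_1),d_{\hat{\mu}_2},[-,-]_{\hat{\mu}_1})$, Proposition \ref{twisting-twilled} asserts that $(\A_1\bowtie\A_2,\Pi^H)$ is again a twilled Lie conformal algebra. By the definition of a twilled Lie conformal algebra, $\A_2$ is then a subalgebra of $(\A,\Pi^H)$, so the restriction of the twisted $\lambda$-bracket to $\A_2\subset\A_1\oplus\A_2$ automatically produces a Lie conformal algebra structure on $\A_2$. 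The whole task is therefore to identify this restricted bracket with the expression \eqref{eq:mul1}.

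To perform the identification, I would compute $\Pi^H_\lambda((0,u),(0,v))$ for $u,v\in\A_2$ using $\Pi^H_\lambda=e^{-\hat{H}}\circ\Pi_\lambda\circ(e^{\hat{H}}\otimes e^{\hat{H}})$. The lift satisfies $\hat{H}(a,v)=(H(v),0)$ and hence $\hat{H}\circ\hat{H}=0$, so $e^{\hat{H}}=\mathrm{Id}+\hat{H}$ on $\A$ and in particular $e^{\hat{H}}(0,u)=(H(u),u)$. Inserting this into the original twilled bracket \eqref{eq:twilled bracket} yields an $\A_1$-component equal to $[H(u)_\lambda H(v)]_1+\rho_{2}(u)_\lambda H(v)-\rho_{2}(v)_{-\lambda-\partial}H(u)$ and an $\A_2$-component equal to $[u_\lambda v]_2+\rho_{1}(H(u))_\lambda v-\rho_{1}(H(v))_{-\lambda-\partial}u$, the latter being exactly $[u_\lambda v]_H$.

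Applying $e^{-\hat{H}}=\mathrm{Id}-\hat{H}$ leaves the $\A_2$-component unchanged and modifies the $\A_1$-component to
\[
[H(u)_\lambda H(v)]_1+\rho_{2}(u)_\lambda H(v)-\rho_{2}(v)_{-\lambda-\partial}H(u)-H([u_\lambda v]_H),
\]
which is precisely the left-hand side of the Maurer--Cartan equation recorded in Proposition \ref{twisting-twilled} and therefore vanishes. Hence $\Pi^H_\lambda((0,u),(0,v))=(0,[u_\lambda v]_H)$, confirming that $\A_2$ is a subalgebra of $(\A,\Pi^H)$ with $\lambda$-bracket $[\cdot_\lambda\cdot]_H$; the Lie conformal algebra axioms for this bracket are inherited from those of $\Pi^H$ guaranteed by Proposition \ref{pro:main twisting prop1}. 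There is no genuine obstacle here: the argument is a short unpacking of the twisting machinery already assembled, the only bookkeeping point being to keep track of how the lift acts, namely $\hat{H}(a,v)=(H(v),0)$.
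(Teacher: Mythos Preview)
Your argument is correct and follows essentially the same route as the paper: invoke Proposition \ref{twisting-twilled} to conclude that $(\A_1\bowtie\A_2,\Pi^H)$ is again twilled, and then read off the restricted bracket on $\A_2$. The only cosmetic difference is that the paper identifies the bracket by noting that $\hat{\mu}_2^{H}=\hat{\mu}_2+d_{\hat{\mu}_1}\hat{H}$ (from the bidegree decomposition in Theorem \ref{thm:twist}) restricts to \eqref{eq:mul1} on $\A_2$, whereas you compute the same thing directly via the conjugation formula $\Pi^H_\lambda=e^{-\hat{H}}\circ\Pi_\lambda\circ(e^{\hat{H}}\otimes e^{\hat{H}})$ from Lemma \ref{lem:twist}(ii).
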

			\begin{proof}
				By Lemma \ref{lem:twillL}, $\hat{\mu}_2^{H}$ is a Lie conformal algebra structure on $\A$. Furthermore, $\hat{\mu}_2^{H}$ restricted to $\A_2$ is a Lie conformal algebra structure and the $\lambda$-bracket on $\A_2$ is exactly \eqref{eq:mul1}.
			\end{proof}
			
			In the case of $\hat{\mu}_2=0$, $\A_1\bowtie\A_2$ is exactly a semi-direct product Lie conformal algebra $\A_1\ltimes\A_2$. To study this special case, we need the notion of a relative Rota-Baxter operator on a module over a Lie conformal algebra.
			
			\begin{defi}{\rm(\cite{HB2})
					Let $(M;\rho)$ be a  module over a Lie conformal algebra $\A$. A $\C[\partial]$-module homomorphism $T:M\rightarrow \A$ is called a  {\bf relative Rota-Baxter operator} (or an {\bf $\mathcal{O}$-operator}) on $M$ over $\A$ if it satisfies
					\begin{eqnarray}\label{eq:relative RB conditon}
					[T(m)_\lambda T(n)]=T\big(\rho(T(m))_\lambda n-\rho(T(n))_{-\la-\partial}m\big), ~~ \mbox{for}~~m,n\in M.
					\end{eqnarray}}
			\end{defi}
			
			Let us  denote the semi-direct product Lie conformal algebra structure on $\A\ltimes_\rho M$ by $\hat{\mu}$.
			\begin{prop}\label{pro:gLa-RB}
				A $\C[\partial]$-module homomorphism $T:M\rightarrow \A$ is  a relative Rota-Baxter operator if and only if $\hat{T}$ is a solution of the Maurer-Cartan equation in the graded Lie algebra $(C^*(M,\A),[-,-]_{\hat{\mu}})$ given in Corollary \ref{pro:twilled GLA}.
			\end{prop}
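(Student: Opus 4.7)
The plan is to recognize this proposition as a special case of Proposition \ref{twisting-twilled} in which the $\hat{\mu}_2$-component of the twilled structure vanishes. First I would observe that the semi-direct product $\A\ltimes_\rho M$ realizes the $\C[\partial]$-module $\A\oplus M$ as a twilled Lie conformal algebra with $\A_1=\A$ and $\A_2=M$: because $M$ carries no bracket of its own, the decomposition of Lemma \ref{lem:decomposition} collapses to $\Pi=\hat{\mu}_1=\hat{\mu}$, with $\hat{\mu}_2=0$, $\hat{\phi}_1=0$, and $\hat{\phi}_2=0$. In the notation of Section 3.2, the data $(\A_1,[\cdot_\lambda\cdot]_1,\rho_1)$ reduces to $(\A,[\cdot_\lambda\cdot],\rho)$, while $[\cdot_\lambda\cdot]_2$ and $\rho_2$ vanish identically.

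Next I would invoke Proposition \ref{twisting-twilled} with $H=T$. Since $\hat{\mu}_2=0$, the differential $d_{\hat{\mu}_2}=[\hat{\mu}_2,\,-\,]_\NR$ vanishes on all of $C^*(M,\A)$, so the Maurer-Cartan equation
\[
d_{\hat{\mu}_2}\hat{T}+\tfrac{1}{2}[\hat{T},\hat{T}]_{\hat{\mu}_1}=0
\]
reduces to $[\hat{T},\hat{T}]_{\hat{\mu}}=0$, which is precisely the statement that $\hat{T}$ is a Maurer-Cartan element of the graded Lie algebra $(C^*(M,\A),[\cdot,\cdot]_{\hat{\mu}})$ (a special case of the DGLA of Corollary \ref{pro:twilled GLA} with trivial differential).

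Finally I would use the explicit unpacking already carried out in Proposition \ref{twisting-twilled}: substituting $[\cdot_\lambda\cdot]_1=[\cdot_\lambda\cdot]$, $\rho_1=\rho$, and $[\cdot_\lambda\cdot]_2=0$, $\rho_2=0$ into the displayed equivalent form of the Maurer-Cartan equation, the four terms built from $\rho_2$ and $[\cdot_\lambda\cdot]_2$ disappear, leaving exactly
\[
[T(m)_\lambda T(n)]=T\bigl(\rho(T(m))_\lambda n-\rho(T(n))_{-\lambda-\partial}m\bigr),\qquad m,n\in M,
\]
which is the relative Rota-Baxter condition \eqref{eq:relative RB conditon}. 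Thus the equivalence follows, and the only potential obstacle — verifying that $\tfrac12[\hat{T},\hat{T}]_{\hat{\mu}}$ really encodes the $\O$-operator identity — is already handled by the computation in the proof of Proposition \ref{twisting-twilled}, so no additional work beyond specializing its formulas is required.
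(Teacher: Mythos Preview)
Your proposal is correct. The paper's own proof is even terser: it simply records the single computation
\[
\tfrac12[\hat{T},\hat{T}]_{\hat{\mu}}(m,n)=[T(m)_\lambda T(n)]-T\bigl(\rho(T(m))_\lambda n-\rho(T(n))_{-\lambda-\partial}m\bigr)
\]
and stops, rather than explicitly invoking Proposition~\ref{twisting-twilled}. Your route---setting $\hat{\mu}_2=0$, $\rho_2=0$, $[\cdot_\lambda\cdot]_2=0$ in Proposition~\ref{twisting-twilled} and reading off the specialization of its displayed equivalence---is the same argument one level removed: the calculation of $[\hat H,\hat H]_{\hat{\mu}_1}$ carried out there is exactly the identity the paper restates here. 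So there is no genuine difference in content, only in packaging; your version makes the logical dependence on the general twilled case explicit, while the paper opts for a self-contained one-liner.
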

			\begin{proof} It follows from  $\half[\hat{T},\hat{T}]_{\hat{\mu}}(m,n)= [T(m)_\lambda T(n)]-T\big(\rho(T(m))_\lambda n-\rho(T(n))_{-\la-\partial}m\big).$
			\end{proof}
			
			\begin{coro}{\rm(\cite{HB2})}\label{cor:dualLCA}
				Let $T$ be a relative Rota-Baxter operator on the module $(M;\rho)$ over a Lie conformal algebra $\A$. Then $(M,[\cdot_\lambda\cdot]^T)$ is a Lie conformal algebra, where the $\lambda$-bracket $[\cdot_\lambda\cdot]^T$ is given by
				\begin{equation}\label{eq:Lie operation1}
				[m_\lambda n]^T= \rho(T(m))_{\lambda}n-\rho(T(n))_{-\lambda-\partial}m,~~\mbox{for}~m,n\in M,
				\end{equation}
				and $T$ is a Lie conformal algebra homomorphism from $(M,[\cdot_\lambda\cdot]^T)$ to $\A$.
			\end{coro}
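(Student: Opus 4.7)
The plan is to deduce Corollary \ref{cor:dualLCA} as a direct specialization of Corollary \ref{cor:Lie and O-operator} to the semi-direct product case, letting the twisting and Maurer--Cartan machinery already established carry all of the substance. First I would identify the semi-direct product $\A\ltimes_{\rho}M$ with a twilled Lie conformal algebra by setting $\A_1:=\A$ and $\A_2:=M$; in the decomposition $\Pi=\hat{\mu}_1+\hat{\mu}_2$ of Lemma \ref{lem:twillL}, the component $\hat{\mu}_2$ vanishes since $M$ carries no $\lambda$-bracket of its own and does not act on $\A$, while $\hat{\mu}_1$ encodes both the $\lambda$-bracket of $\A$ and the action $\rho$ on $M$ in accordance with \eqref{semi-direct-1}--\eqref{semi-direct-2}.

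Next, I would invoke Proposition \ref{pro:gLa-RB} to rewrite the defining identity \eqref{eq:relative RB conditon} of a relative Rota--Baxter operator as the Maurer--Cartan equation $\tfrac12[\hat{T},\hat{T}]_{\hat{\mu}}=0$ in the graded Lie algebra $(C^*(M,\A),[\cdot,\cdot]_{\hat{\mu}})$. Because $\hat{\mu}_2=0$ in this situation, the differential $d_{\hat{\mu}_2}$ appearing in Corollary \ref{pro:twilled GLA} and Proposition \ref{twisting-twilled} is identically zero, so the full Maurer--Cartan condition $d_{\hat{\mu}_2}\hat{T}+\tfrac12[\hat{T},\hat{T}]_{\hat{\mu}_1}=0$ collapses precisely to \eqref{eq:relative RB conditon}.

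Having $T$ realized as a Maurer--Cartan element, I would then apply Corollary \ref{cor:Lie and O-operator}: the twisted structure $\hat{\mu}_2^{T}$ restricts to a bona fide Lie conformal algebra $\lambda$-bracket on $\A_2=M$ given by \eqref{eq:mul1}. Specializing $\rho_1$ to $\rho$, $H$ to $T$, and $[u_\lambda v]_2=0$ in that formula reproduces exactly \eqref{eq:Lie operation1} for $[m_\lambda n]^T$. The homomorphism property $T([m_\lambda n]^T)=[T(m)_\lambda T(n)]$ is then an immediate rewriting of \eqref{eq:relative RB conditon}, and $T$ already commutes with $\partial$ by hypothesis, so it is a Lie conformal algebra homomorphism from $(M,[\cdot_\lambda\cdot]^T)$ to $\A$.

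The only point requiring care --- not really an obstacle --- will be confirming that the lift and sign conventions of $\hat{\mu}$ recorded in \eqref{semi-direct-1}--\eqref{semi-direct-2} match those implicit in Proposition \ref{pro:gLa-RB} and Corollary \ref{cor:Lie and O-operator}, so that the Maurer--Cartan equation reduces to \eqref{eq:relative RB conditon} on the nose, without spurious factors. Granted that bookkeeping, no genuine computation remains beyond what has already been done in the twisting framework.
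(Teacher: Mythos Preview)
Your proposal is correct and follows exactly the route the paper intends: the corollary is positioned immediately after Proposition \ref{pro:gLa-RB} and Corollary \ref{cor:Lie and O-operator} precisely because it is the specialization of those results to the semi-direct product case $\hat{\mu}_2=0$, and your argument carries that out cleanly. The paper itself does not spell out a proof (it simply cites \cite{HB2}), but the implicit derivation is the one you give, including the final observation that the homomorphism property is just a rewriting of \eqref{eq:relative RB conditon}.
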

			\begin{coro}\label{cor:RB-twilled Lie}
				Let $T$ be a relative Rota-Baxter operator on the module $(M;\rho)$ over a Lie conformal algebra $\A$. Then $(\A\oplus M,\Pi^T=\hat{\mu}_1^{T}+\hat{\mu}_2^{T})$ is a twilled Lie conformal algebra, where $\hat{\mu}_1^{T}=\hat{\mu},~	\hat{\mu}_2^{T}=[\hat{\mu},\hat{T}]_{\NR}$.
			\end{coro}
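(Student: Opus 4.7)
The plan is to realize the semi-direct product $\A \ltimes_\rho M$ as a particularly degenerate twilled Lie conformal algebra with respect to the decomposition $\A_1 := \A$, $\A_2 := M$, and then to invoke Proposition \ref{twisting-twilled} and Theorem \ref{thm:twist} with $T$ in place of the generic homomorphism $H$.

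First, I would unwind the decomposition $\Pi = \hat{\phi}_1 + \hat{\mu}_1 + \hat{\mu}_2 + \hat{\phi}_2$ for the semi-direct product. Since $\A$ is a subalgebra and $M$ is an abelian ideal on which $\A$ acts by $\rho$, one reads off from \eqref{bracket-1}--\eqref{bracket-4} that $\hat{\phi}_1 = 0$, $\hat{\phi}_2 = 0$, $\hat{\mu}_2 = 0$, and that $\hat{\mu}_1$ coincides with the lift $\hat{\mu}$ of the semi-direct product bracket. So the triple $(\A\oplus M, \A, M)$ is already twilled, in the trivial way.

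Next, I would observe that under these vanishings Proposition \ref{pro:gLa-RB} tells us exactly that the relative Rota-Baxter condition on $T$ is equivalent to $\tfrac{1}{2}[\hat{T}, \hat{T}]_{\hat{\mu}} = 0$. Since $\hat{\mu}_2 = 0$ forces $d_{\hat{\mu}_2}\hat{T} = 0$, this is the full Maurer-Cartan equation
\[
d_{\hat{\mu}_2}\hat{T} + \tfrac{1}{2}[\hat{T}, \hat{T}]_{\hat{\mu}_1} = 0
\]
of the differential graded Lie algebra $(C^*(M,\A), d_{\hat{\mu}_2}, [-,-]_{\hat{\mu}_1})$ of Corollary \ref{pro:twilled GLA}. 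Hence, by Proposition \ref{twisting-twilled}, $(\A \oplus M, \Pi^T)$ is again a twilled Lie conformal algebra.

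It then remains to read off the components of $\Pi^T$ via Theorem \ref{thm:twist}. Substituting $\hat{\phi}_1 = 0$, $\hat{\mu}_2 = 0$, $\hat{\phi}_2 = 0$ into \eqref{twisting-1}--\eqref{twisting-4} collapses the formulas to $\hat{\phi}_1^T = 0$, $\hat{\mu}_1^T = \hat{\mu}$, $\hat{\mu}_2^T = d_{\hat{\mu}}\hat{T} = [\hat{\mu}, \hat{T}]_{\NR}$, and $\hat{\phi}_2^T = \tfrac{1}{2}[\hat{T}, \hat{T}]_{\hat{\mu}}$, the last of which vanishes precisely by Proposition \ref{pro:gLa-RB}. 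This delivers the asserted formulas and completes the proof. There is no real obstacle here; the only care needed is to verify the identifications $\hat{\mu}_1 = \hat{\mu}$ and $\hat{\mu}_2 = 0$ for the semi-direct product, which is a direct comparison of \eqref{semi-direct-1}--\eqref{semi-direct-2} with Proposition \ref{p1}.
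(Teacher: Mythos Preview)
Your proposal is correct and follows exactly the line of reasoning the paper intends: the corollary is stated without an explicit proof precisely because it is the specialization of Proposition~\ref{twisting-twilled} (via Proposition~\ref{pro:gLa-RB}) to the degenerate twilled case $\hat{\mu}_2=0$, with the component formulas read off from Theorem~\ref{thm:twist}. Your write-up simply makes these implicit steps explicit, and there is nothing to correct.
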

			
			Given a relative Rota-Baxter operator $T$ on the module $(M;\rho)$ over a Lie conformal algebra $\A$, we have a twilled Lie conformal algebra $\A\bowtie M_T$ by twisting $\A\ltimes_\rho M$ by $T$, where $M_T=(M,[\cdot_\lambda\cdot]^T)$ is the Lie conformal algebra defined by \eqref{eq:Lie operation1}. Define $\rho^T:M\rightarrow {\rm Cend}(\A)$ by
			\begin{eqnarray*}
				\label{eq:T-Rep}\rho^T(m)_{\lambda}a:=(\hat{\mu}_2^{T})_\lambda ((0,m),(a,0))=[T(m)_\lambda a]+T(\rho(a)_{-\lambda-\partial}m),~~\mbox{for}~a\in\A,~m\in M.
			\end{eqnarray*}
			By $[\hat{\mu}_2^{T},\hat{\mu}_2^{T}]_{\NR}=0$ and Proposition \ref{pro:MC}, $(\A; \rho^T)$ is a module of $M_{T}$. Thus, by \eqref{eq:twilled bracket}, we have the following:
			\begin{prop}
				The Lie conformal algebra structure on $\A\bowtie M_T$ is explicitly given by ($a,b\in\A,~m,n\in\ M$)
				\begin{equation}\label{eq:twilled bracket2}
				[(a,m)_\lambda (b,n)]_{\bowtie }=([a_\lambda b]+\rho^T(m)_{\lambda}b-\rho^T(n)_{-\lambda-\partial}a,[m_\lambda n]^T+\rho(a)_{\lambda}n-\rho(b)_{-\lambda-\partial} m).
				\end{equation}
			\end{prop}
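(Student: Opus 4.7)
The plan is to apply the general formula \eqref{eq:twilled bracket} for the $\lambda$-bracket on a twilled Lie conformal algebra $\A_1\bowtie\A_2$, specialized to $\A_1=\A$ and $\A_2=M_T$. By Corollary \ref{cor:RB-twilled Lie}, the twilled Lie conformal algebra structure on $\A\bowtie M_T$ decomposes as $\Pi^T=\hat{\mu}_1^T+\hat{\mu}_2^T$ with $\hat{\mu}_1^T=\hat{\mu}$ and $\hat{\mu}_2^T=[\hat{\mu},\hat{T}]_{\NR}$. The task is to read off the four ingredients entering \eqref{eq:twilled bracket}, namely the Lie conformal bracket on $\A_1$, the action $\rho_1$ of $\A_1$ on $\A_2$, the Lie conformal bracket on $\A_2$, and the action $\rho_2$ of $\A_2$ on $\A_1$.

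First, from $\hat{\mu}_1^T=\hat{\mu}$ and the definition of the semi-direct product in Proposition \ref{p1}, I would directly identify $[a_\lambda b]_1=[a_\lambda b]$ and $\rho_1=\rho$. Second, restricting $\hat{\mu}_2^T$ to $M\otimes M$ reproduces Corollary \ref{cor:dualLCA}, so $[m_\lambda n]_2=[m_\lambda n]^T=\rho(T(m))_\lambda n-\rho(T(n))_{-\lambda-\partial}m$. Third, to identify $\rho_2$ I would compute $\hat{\mu}_2^T$ on the mixed input $((0,m),(a,0))$ using the NR bracket formula together with $\hat{T}(0,m)=(T(m),0)$ and $\hat{T}(a,0)=0$; the only surviving terms are
\begin{equation*}
\hat{\mu}_\lambda((T(m),0),(a,0))-\hat{T}\big(\hat{\mu}_\lambda((0,m),(a,0))\big)=\big([T(m)_\lambda a]+T(\rho(a)_{-\lambda-\partial}m),0\big),
\end{equation*}
which matches exactly the definition of $\rho^T$ given just before the proposition, confirming that $\rho_2=\rho^T$.

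Finally, substituting these four identifications into \eqref{eq:twilled bracket} produces \eqref{eq:twilled bracket2}, completing the proof. The fact that $\rho^T$ genuinely defines a module of $M_T$ on $\A$ is already guaranteed by $[\hat{\mu}_2^T,\hat{\mu}_2^T]_{\NR}=0$ and Proposition \ref{pro:MC}, as noted in the paragraph preceding the statement, so no separate verification of the module axioms is required.

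The argument is essentially an assembly of results already established; the only mild obstacle is bookkeeping the signs and $\lambda$-parameter conventions when unpacking $[\hat{\mu},\hat{T}]_{\NR}$ on the mixed inputs to ensure that the definition of $\rho^T$ lines up with the $\rho_2$-slot of \eqref{eq:twilled bracket} (in particular, that the skew-symmetric combination $\rho^T(m)_\lambda b-\rho^T(n)_{-\lambda-\partial}a$ in the first component matches the lift-of-$\beta$ prescription in \eqref{semi-direct-2}). Once this matching is verified on one representative input, the remaining terms follow by the skew-symmetry and sesquilinearity that are built into the lift construction.
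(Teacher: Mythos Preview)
Your proposal is correct and follows essentially the same approach as the paper: the paper simply states that the result follows from \eqref{eq:twilled bracket} after the preceding paragraph has identified $\rho^T$ as the $\rho_2$-piece of $\hat{\mu}_2^T$, and you have accurately spelled out those identifications and the one computation of $[\hat{\mu},\hat{T}]_{\NR}$ on mixed inputs.
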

			
			The notion of a conformal classical Yang-Baxter equation was introduced  in \cite{Li} in the study of Lie conformal bialgebras. Let $\A$ be a Lie conformal algebra and $r=\sum_{i} a_i\otimes b_i\in \A\otimes \A$. Set $\partial^{\otimes^{3}}=\partial\otimes1\otimes1+1\otimes\partial\otimes1+1\otimes1\otimes\partial.$  The equation
			\begin{equation}
			\begin{split}
			\llbracket r,r\rrbracket=&\sum_{i,j}\big([a_{i_{\mu}}a_{j}]\otimes b_{i}\otimes b_{j}| _{\mu=1\otimes \partial \otimes 1}-a_{i}\otimes[a_{j_{\mu}}b_{i}]\otimes b_{j}|_{\mu=1\otimes 1\otimes\partial}-a_{i}\otimes a_{j}\otimes[b_{j_{\mu}}b_{i}]|_{\mu=1\otimes\partial\otimes1}\big)\\
			\equiv&0~(\mbox{mod}~\partial^{\otimes^{3}})
			\end{split}
			\end{equation}
			is called the {\bf conformal classical Yang-Baxter equation} in $\A$.
			
			For any $r=\sum_{i} a_i\otimes b_i\in \A\otimes \A$, we associate a conformal linear map $r^{\sharp}\in {\rm Chom}(\A^{*c},\A)$ as follows:
			\begin{equation*}
			r^{\sharp}_{\lambda}(\alpha)=\sum_i\alpha_{-\lambda-\partial}(a_i)b_i,\quad \mbox{for}~\alpha\in \A^{*c}.
			\end{equation*}
			Set $r^{21}=\sum_i b_i\otimes a_i$. We say $r$ is {\bf skew-symmetric} if $r=-r^{21}$.
			
			Relative Rota-Baxter operators are closely related to the conformal classical Yang-Baxter equation, as the following lemma shows.
			
			\begin{lemm}{\rm(\cite{HB2})}\label{lem:equivalence r-matrix}
				Let $\A$ be a finite Lie conformal algebra. Then $r$ is a skew-symmetric solution of the conformal classical Yang-Baxter equation if and only if $r^{\sharp}_{0}=r^{\sharp}_{\lambda}|_{\lambda=0}$ is a relative Rota-Baxter operator on $(\A^{*c};\ad^*)$ over $\A$.
			\end{lemm}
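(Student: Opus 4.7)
The plan is to translate both conditions, the conformal CYBE for $r$ and the relative Rota-Baxter identity for $T:=r^{\sharp}_{0}$, into identities in $\A[\lambda]$, and then pair the CYBE with arbitrary $\alpha,\beta\in\A^{*c}$ in the first two tensor slots. Writing $r=\sum_{i}a_{i}\otimes b_{i}$, we have $T(\alpha)=\sum_{i}\alpha_{-\partial}(a_{i})\,b_{i}$, where the scalar $\alpha_{-\partial}(a_{i})\in\C[\partial]$ acts on $b_{i}$ via the $\C[\partial]$-module structure. The identity to verify is
$$[T(\alpha)_{\lambda}T(\beta)]=T\bigl(\ad^{*}(T(\alpha))_{\lambda}\beta-\ad^{*}(T(\beta))_{-\lambda-\partial}\alpha\bigr),\qquad \alpha,\beta\in\A^{*c}.$$

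First I would expand the LHS. Using conformal sesquilinearity to commute the scalars $\alpha_{-\partial}(a_{i})$, $\beta_{-\partial}(a_{j})$ past the bracket $[b_{i\,\mu}b_{j}]$ (with appropriate shifts of $\mu$), one sees that $[T(\alpha)_{\lambda}T(\beta)]$ is precisely the image of the first summand of $\llbracket r,r\rrbracket$, namely $\sum_{i,j}[a_{i\,\mu}a_{j}]\otimes b_{i}\otimes b_{j}\bigr|_{\mu=1\otimes\partial\otimes 1}$, under the pairing that applies $\alpha$ and $\beta$ to slots one and two and leaves the third slot in $\A$. Next I would expand the RHS using the defining formula $(\ad^{*}(x)_{\lambda}\beta)_{\mu}v=-\beta_{\mu-\lambda}[x_{\lambda}v]$. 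Pushing $T$ through the argument and again using sesquilinearity, the term $T(\ad^{*}(T(\alpha))_{\lambda}\beta)$ matches the second summand $-\sum_{i,j}a_{i}\otimes[a_{j\,\mu}b_{i}]\otimes b_{j}\bigr|_{\mu=1\otimes 1\otimes\partial}$ paired against $\alpha\otimes\beta\otimes\mathrm{id}$. For the remaining term $T(\ad^{*}(T(\beta))_{-\lambda-\partial}\alpha)$, the analogous expansion produces an expression of the form $a_{i}\otimes[b_{j\,\nu}b_{i}]\otimes b_{j}$; at this point the skew-symmetry $r=-r^{21}$ allows one to re-index and convert this into $a_{i}\otimes a_{j}\otimes[b_{j\,\nu}b_{i}]$, reproducing exactly the third summand of $\llbracket r,r\rrbracket$.

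Combining these three identifications, the Rota-Baxter equation holds for a given $\alpha,\beta$ if and only if the pairing of $\llbracket r,r\rrbracket$ with $\alpha\otimes\beta\otimes\mathrm{id}$ vanishes; the $\partial^{\otimes 3}$-ambiguity in the CYBE is absorbed by conformal sesquilinearity under the pairing. Because $\A$ is finite, $\A^{*c}$ separates points of $\A$, so vanishing for every $\alpha,\beta$ is equivalent to $\llbracket r,r\rrbracket\equiv 0\pmod{\partial^{\otimes 3}}$, giving both directions of the equivalence. The main obstacle is purely bookkeeping: one must carefully match the three substitutions $\mu=1\otimes\partial\otimes 1$, $\mu=1\otimes 1\otimes\partial$ in the CYBE with the $-\lambda-\partial$ shifts appearing in both $\ad^{*}$ and in the definition of $r^{\sharp}$. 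Once the dictionary between the two formalisms is fixed and skew-symmetry is applied at the right moment in the third term, the correspondence is a term-by-term identification.
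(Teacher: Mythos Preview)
The paper does not give its own proof of this lemma; it is stated with a citation to \cite{HB2} and used as a black box. So there is no in-paper argument to compare your proposal against.

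That said, your overall strategy---pair $\llbracket r,r\rrbracket$ against $\alpha\otimes\beta$ in two tensor slots and match the three CYBE summands to the three terms of the Rota-Baxter identity---is the standard route and is essentially what one finds in \cite{HB2}. However, your term-by-term dictionary is misaligned. Since $T(\alpha)=\sum_i\alpha_{-\partial}(a_i)\,b_i$ and $T(\beta)=\sum_j\beta_{-\partial}(a_j)\,b_j$, the bracket $[T(\alpha)_\lambda T(\beta)]$ is a bracket of $b$'s, not of $a$'s. So the LHS corresponds to the \emph{third} CYBE summand $-\sum_{i,j}a_i\otimes a_j\otimes[b_{j\,\mu}b_i]\bigr|_{\mu=1\otimes\partial\otimes1}$ (the one with the bracket sitting in the third slot), not to the first summand $\sum_{i,j}[a_{i\,\mu}a_j]\otimes b_i\otimes b_j$ as you wrote. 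Likewise, the term requiring skew-symmetry of $r$ to re-index is the one that eventually lands on the first CYBE summand, not the third. The middle identification (with the second CYBE summand) is the one that goes through directly.

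This is exactly the ``bookkeeping obstacle'' you flagged, and it has bitten you: once you swap the roles of the first and third summands in your dictionary, the argument goes through as you outlined. The use of finiteness to conclude (so that $\A^{*c}$ separates points) and the observation that the $\partial^{\otimes3}$-ambiguity is killed by sesquilinearity under the pairing are both correct and necessary.
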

			
			This, together with Proposition \ref{pro:gLa-RB}, gives the following result.
			\begin{prop}\label{44}
				Let $\A$ be a finite Lie conformal algebra. Then $r$ is a skew-symmetric solution of the  conformal classical Yang-Baxter equation if and only if $r^{\sharp}_{0}=r^{\sharp}_{\lambda}|_{\lambda=0}$ is a  Maurer-Cartan element of the graded Lie algebra $(C^*(\A^{*c},\A),[\cdot,\cdot]_{\hat{\mu}})$ given in Corollary \ref{pro:twilled GLA}, where ${\mu}$ is the Lie conformal algebra structure on $\A\ltimes_{\ad^*} \A^{*c}$.
			\end{prop}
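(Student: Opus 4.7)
The plan is to prove this by chaining together the two equivalences already established just above the statement. The proof itself is extremely short; the work is in identifying which result feeds which.

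First, I would apply Lemma \ref{lem:equivalence r-matrix} to translate the analytic-looking statement about $r$ into an algebraic one about $r^{\sharp}_0$: namely, that $r=\sum_i a_i\otimes b_i\in\A\otimes\A$ is a skew-symmetric solution of the conformal classical Yang-Baxter equation if and only if $r^{\sharp}_0:\A^{*c}\rightarrow\A$, defined by $r^{\sharp}_0(\alpha)=\sum_i\alpha_{-\partial}(a_i)b_i$, is a relative Rota-Baxter operator on the coadjoint module $(\A^{*c};\ad^*)$ over $\A$.

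Second, I would specialize Proposition \ref{pro:gLa-RB} to the situation $M=\A^{*c}$ and $\rho=\ad^*$. The associated semi-direct product Lie conformal algebra $\A\ltimes_{\ad^*}\A^{*c}$ carries a structure $\hat{\mu}$ (of the form in \eqref{semi-direct-1}--\eqref{semi-direct-2}), and Proposition \ref{pro:gLa-RB} asserts that a $\C[\partial]$-module homomorphism $T:\A^{*c}\rightarrow\A$ is a relative Rota-Baxter operator if and only if $\hat{T}$ is a Maurer-Cartan element of the graded Lie algebra $(C^*(\A^{*c},\A),[\cdot,\cdot]_{\hat{\mu}})$ supplied by Corollary \ref{pro:twilled GLA}. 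Applying this to $T=r^{\sharp}_0$ and composing the two equivalences yields the claim.

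The main (minor) point to check is purely bookkeeping: that the $\hat{\mu}$ appearing in the target graded Lie algebra is exactly the lift of the Lie conformal algebra structure on $\A\ltimes_{\ad^*}\A^{*c}$ and not some other $\lambda$-bracket, and that $r^{\sharp}_0$ is a genuine $\C[\partial]$-module homomorphism $\A^{*c}\rightarrow\A$ (so that Proposition \ref{pro:gLa-RB} applies verbatim). Both follow immediately from finiteness of $\A$ (used so that $\A^{*c}$ is finitely generated and ${\rm Cend}(\A^{*c})$ makes sense) and the conformal sesquilinearity built into the definition of $r^{\sharp}_\lambda$. Hence no direct computation with the Nijenhuis--Richardson bracket is needed: the proposition follows by simply writing
\[
r \text{ skew-symmetric CYBE solution} \;\Longleftrightarrow\; r^{\sharp}_0 \text{ relative Rota-Baxter} \;\Longleftrightarrow\; \widehat{r^{\sharp}_0} \text{ Maurer--Cartan in } (C^*(\A^{*c},\A),[\cdot,\cdot]_{\hat{\mu}}).
\]
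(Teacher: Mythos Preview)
Your proposal is correct and follows exactly the paper's approach: the paper simply states that Proposition \ref{44} follows by combining Lemma \ref{lem:equivalence r-matrix} with Proposition \ref{pro:gLa-RB}, which is precisely the two-step chain of equivalences you describe. Your extra remarks on bookkeeping (finiteness, $\C[\partial]$-linearity of $r^{\sharp}_0$, identification of $\hat{\mu}$) are not spelled out in the paper but are the right sanity checks.
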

			
			By Corollaries \ref{cor:dualLCA} and \ref{cor:RB-twilled Lie} and Proposition \ref{eq:twilled bracket2}, we have
			\begin{prop}
				Let $\A$ be a finite Lie conformal algebra. if $r$ is a skew-symmetric solution of the  conformal classical Yang-Baxter equation, then $ \A^{*c}_{r^\sharp}:=(\A^{*c},[\cdot_\lambda\cdot]^{r^\sharp})$ is a Lie conformal algebra, where the $\lambda$-bracket $[\cdot_\lambda\cdot]^{r^\sharp}$ is given by
				\begin{equation*}
				[\alpha_\lambda \beta]^{r^\sharp}= \ad^*(r_0^\sharp(\alpha))_\lambda\beta-\ad^*(r_0^\sharp(\beta))_{-\lambda-\partial}\alpha,\quad\forall~\alpha,\beta\in \A^{*c}.
				\end{equation*}
				Furthermore, the map $\fad^{*}:\A^{*c}\rightarrow {\rm gc}(\A)$ defined by
				$$\fad^{*}(\alpha)_\lambda a=[r_0^\sharp(\alpha)_\lambda a]+r_0^\sharp(\ad^*(a)_{-\lambda-\partial}\alpha),\quad\forall~\alpha\in \A^{*c},~a\in \A$$
				makes $\A$ into a module of $\A^{*c}_{r^\sharp}$, and there is a Lie conformal algebra structure on $\A\bowtie \A^{*c}_{r^\sharp}$ given by ($a,b\in\A,\alpha,\beta\in\A^{*c}$)
				\begin{equation*}
				[(a,\alpha)_\lambda (b,\beta)]_{\bowtie }=([a_\lambda b]+\fad^{*}(\alpha)_{\lambda}b-\fad^{*}(\beta)_{-\lambda-\partial}a,[\alpha_\lambda \beta]^{r^\sharp}+\ad^*(a)_\lambda \beta-\ad^*(b)_{-\lambda-\partial} \alpha).
				\end{equation*}
			\end{prop}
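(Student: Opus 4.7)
The plan is to unpack the three cited results with the specific choice of module coming from the conformal classical Yang-Baxter equation. First, since $\A$ is finite, Lemma \ref{lem:equivalence r-matrix} guarantees that any skew-symmetric solution $r$ of the conformal classical Yang-Baxter equation gives rise to a relative Rota-Baxter operator $T := r_0^\sharp : \A^{*c} \to \A$ on the coadjoint module $(\A^{*c}; \ad^*)$. This reduces the bulk of the statement to the generic claim that any relative Rota-Baxter operator on a module yields the structures described.

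Next, I would apply Corollary \ref{cor:dualLCA} to the triple $(\A, (\A^{*c}; \ad^*), T)$. Substituting $M = \A^{*c}$, $\rho = \ad^*$, and $T = r_0^\sharp$ into the general formula \eqref{eq:Lie operation1} yields precisely the claimed $\lambda$-bracket $[\alpha_\lambda \beta]^{r^\sharp}$ on $\A^{*c}$, and Corollary \ref{cor:dualLCA} ensures that this defines a Lie conformal algebra structure on $\A^{*c}_{r^\sharp}$.

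Next, Corollary \ref{cor:RB-twilled Lie} produces a twilled Lie conformal algebra structure $\Pi^T = \hat{\mu}_1^T + \hat{\mu}_2^T$ on $\A \oplus \A^{*c}$, with $\hat{\mu}_1^T = \hat{\mu}$ (the semi-direct product bracket of $\A\ltimes_{\ad^*}\A^{*c}$) and $\hat{\mu}_2^T = [\hat{\mu}, \hat{T}]_{\NR}$. By the discussion immediately preceding \eqref{eq:twilled bracket2}, the second factor $M_T = \A^{*c}_{r^\sharp}$ acts on $\A$ by
\[
\rho^T(\alpha)_\lambda a = [r_0^\sharp(\alpha)_\lambda a] + r_0^\sharp(\ad^*(a)_{-\lambda - \partial}\alpha),
\]
which is exactly the claimed definition of $\fad^*$. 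That $\fad^*$ defines a module structure follows from $[\hat{\mu}_2^T, \hat{\mu}_2^T]_{\NR} = 0$ together with Proposition \ref{pro:MC}. The explicit $\lambda$-bracket on $\A \bowtie \A^{*c}_{r^\sharp}$ is then simply \eqref{eq:twilled bracket2} specialised to $\rho = \ad^*$ and $\rho^T = \fad^*$.

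No serious obstacle is anticipated: the proposition is an assembly of three previously established results in the indicated order. The only point requiring care is the bookkeeping of the substitutions $M = \A^{*c}$, $\rho = \ad^*$, $T = r_0^\sharp$, so that the generic formulas from the preceding corollaries reproduce the concrete expressions asserted in the statement.
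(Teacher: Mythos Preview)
Your proposal is correct and follows essentially the same approach as the paper: the paper states the proposition as an immediate consequence of Corollaries~\ref{cor:dualLCA} and~\ref{cor:RB-twilled Lie} together with the proposition containing~\eqref{eq:twilled bracket2}, and you have simply unpacked this by first invoking Lemma~\ref{lem:equivalence r-matrix} to pass from the CYBE solution to the relative Rota-Baxter operator $T=r_0^\sharp$ on $(\A^{*c};\ad^*)$, then specialising those results accordingly. The bookkeeping you outline is exactly what is needed.
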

			\begin{remark}
				Given a skew-symmetric solution $r$ of the conformal classical Yang-Baxter equation on $\A$, the author in {\rm \cite{Li}} showed that there exists a Lie conformal algebra structure on $\A\oplus\A^{*c}$ corresponding to the coboundary Lie conformal bialgebra induced by $r$. Using our twisting theory, we also obtain this Lie conformal algebra structure on $\A\oplus\A^{*c}$ and give the concrete expression of this Lie conformal algebra structure directly.
			\end{remark}
			
			\subsection{The case of quasi-twilled Lie conformal algebras}
			Let $(\A,\A_1,\A_2)$ be a quasi-twilled Lie conformal algebra with the structure $\Pi=\hat{\mu}_1+\hat{\mu}_2+\hat{\phi}_1$. The twisted Lie conformal algebra structure $\Pi^H$ by $H:\A_2\rightarrow \A_1$ has the forms:
			\begin{eqnarray*}
				\hat{\phi}_1^{H}&=&\hat{\phi}_1,\\
				\hat{\mu}_1^{H}&=&\hat{\mu}_1+[\hat{\phi}_1,\hat{H}]_{\NR},\\
				\hat{\mu}_2^{H}&=&\hat{\mu}_2+d_{\hat{\mu}_1}\hat{H}+\half[[\hat{\phi}_1,\hat{H}]_{\NR},\hat{H}]_{\NR},\\
				\hat{\phi}_2^{H}&=&d_{\hat{\mu}_2}\hat{H}+\half[\hat{H},\hat{H}]_{\hat{\mu}_1}+\frac{1}{6}[[[\hat{\phi}_1,\hat{H}]_{\NR},\hat{H}]_{\NR},\hat{H}]_{\NR}.
			\end{eqnarray*}
			Recall that $C^*(\A_2,\A_1)$ has an $L_\infty$-algebra structure $(d_{\hat{\mu}_2},[-,-]_{{\hat{\mu}_1}},[-,-,-]_{\hat{\phi}_1})$ (see Theorem \ref{quasi-as-shLie}). Moreover, we have
			\begin{prop}\label{pro:quasi-twilled to quasi-twilled}
				The result of twisting $(\A,\A_1,\A_2,\Pi^{H})$ is also a quasi-twilled Lie conformal algebra  if and only if $H$ is a Maurer-Cartan element of the above $L_\infty$-algebra, i.e.
				\begin{equation*}\label{eq:Twist-quasi-Twilled-MC}
				d_{\hat{\mu}_2}(\hat{H})+\half[\hat{H},\hat{H}]_{{\hat{\mu}_1}}+\frac{1}{6}[\hat{H},\hat{H},\hat{H}]_{\hat{\phi}_1}=0.
				\end{equation*}
				This is equivalent to
				\begin{align*}\label{eq:MC-expression2}
				[H(u)_\lambda H(v)]_1&+\rho_{2}(u)_\lambda H(v)-\rho_{2}(v)_{-\lambda-\partial}H(u)\\
				\nonumber\quad\quad	&=H\big(\rho_{1}(H(u))_\lambda v-\rho_{1}(H(v))_{-\lambda-\partial}u\big)+H([u_\lambda v]_2)+H(\phi_{1\lambda}(H(u),H(v))), ~~\mbox{for}~~u,v\in \A_2.
				\end{align*}
			\end{prop}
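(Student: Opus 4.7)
The plan is to read off $\hat{\phi}_2^H$ directly from Theorem \ref{thm:twist} specialized to the quasi-twilled case, then match the vanishing of $\hat{\phi}_2^H$ against the Maurer-Cartan equation of the $L_\infty$-algebra of Theorem \ref{quasi-as-shLie}, and finally unpack that equation into its explicit form on $\A_2\otimes\A_2$.

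First I would note that by Proposition \ref{pro:main twisting prop1}, $\Pi^H$ is automatically a Lie conformal algebra structure, so $(\A,\A_1,\A_2,\Pi^H)$ is quasi-twilled precisely when $\hat{\phi}_2^H=0$. Substituting the hypothesis $\hat{\phi}_2=0$ into \eqref{twisting-4} gives
\[
\hat{\phi}_2^{H}= d_{\hat{\mu}_2}\hat{H}+\half[\hat{H},\hat{H}]_{\hat{\mu}_1}+\frac{1}{6}[[[\hat{\phi}_1,\hat{H}]_{\NR},\hat{H}]_{\NR},\hat{H}]_{\NR}.
\]
Since $\hat{H}\in C^1(\A_2,\A_1)$ has degree $n=1$, the sign $(-1)^{n-1}=+1$ appearing in \eqref{eq:shLie3} identifies the last triple bracket with $[\hat{H},\hat{H},\hat{H}]_{\hat{\phi}_1}$. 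This yields the stated Maurer-Cartan equation and, by bidegree reasons (Lemma \ref{lem:bidegree perserve}), guarantees that each summand lies in the correct space $C^{-1|2}(\A_1\oplus\A_2,\A_1\oplus\A_2)$.

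For the explicit form, I would evaluate each summand on $u,v\in\A_2$. The first two computations were already carried out in the proof of Proposition \ref{twisting-twilled}, and give
\[
d_{\hat{\mu}_2}\hat{H}(u,v)=\rho_{2}(u)_\lambda H(v)-\rho_{2}(v)_{-\lambda-\partial}H(u)-H([u_\lambda v]_2),
\]
\[
\half[\hat{H},\hat{H}]_{\hat{\mu}_1}(u,v)=H\bigl(\rho_{1}(H(u))_\lambda v-\rho_{1}(H(v))_{-\lambda-\partial}u\bigr)-[H(u)_\lambda H(v)]_1.
\]
For the new cubic term, the cleanest route is Lemma \ref{lem:twist}(ii): since $e^{\hat{H}}(a,v)=(a+H(v),v)$ and $e^{-\hat{H}}(a,v)=(a-H(v),v)$, one computes $\Pi^H_\lambda((0,u),(0,v))=e^{-\hat{H}}\Pi_\lambda((H(u),u),(H(v),v))$. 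Projecting onto the $\A_1$-component reproduces the two terms above and in addition contributes $-H(\phi_{1\lambda}(H(u),H(v)))$ coming from the $\hat{\phi}_1$ part of $\Pi$; by uniqueness of the bidegree decomposition this new contribution must equal $\frac{1}{6}[[[\hat{\phi}_1,\hat{H}]_{\NR},\hat{H}]_{\NR},\hat{H}]_{\NR}(u,v)$.

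The entire argument is essentially bookkeeping on top of Theorem \ref{thm:twist}; the only step requiring vigilance is the matching of signs and combinatorial factors between the iterated NR-bracket expressions and the ternary bracket $[\hat{H},\hat{H},\hat{H}]_{\hat{\phi}_1}$. I would double-check this by exploiting Lemma \ref{lem:bidegree perserve} and the exponential formulation of Lemma \ref{lem:twist}(ii), which bypasses the direct (and error-prone) expansion of the triple NR bracket.
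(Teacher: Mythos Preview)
Your proposal is correct and follows the same route as the paper, which merely says ``It follows by a direct calculation.'' You have supplied the details the paper omits: reading off $\hat{\phi}_2^{H}$ from Theorem \ref{thm:twist} with $\hat{\phi}_2=0$, matching the triple NR-bracket to the ternary bracket of Theorem \ref{quasi-as-shLie} via the sign check $(-1)^{n-1}=1$, and using the exponential formula of Lemma \ref{lem:twist}(ii) to extract the explicit $\A_1$-component on $\A_2\otimes\A_2$.
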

			\begin{proof}
				It follows by a direct calculation.
			\end{proof}
			
			\begin{coro}\label{cor:new LCA}
				Let $(\A,\A_1,\A_2)$ be a  quasi-twilled   Lie conformal algebra and $H$ a solution of the Maurer-Cartan equation in the associated $L_\infty$-algebra. Then
				\begin{eqnarray}\label{eq:mul3}
				[u_\lambda v]^H:=\rho_{1}(H(u))_\lambda v-\rho_{1}(H(v))_{-\lambda-\partial}u+[u_\lambda v]_2+\phi_{1\lambda}(H(u),H(v)),~~\mbox{for}~u,v\in\A_2
				\end{eqnarray}
				defines a  Lie conformal algebra structure on $\A_2$.
			\end{coro}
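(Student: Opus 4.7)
The plan is to deduce this corollary directly from Proposition \ref{pro:quasi-twilled to quasi-twilled} together with the conjugation formula in Lemma \ref{lem:twist}(ii). Since $H$ is a Maurer-Cartan element of the associated $L_\infty$-algebra, Proposition \ref{pro:quasi-twilled to quasi-twilled} tells us that the twisted structure $(\A,\A_1,\A_2,\Pi^{H})$ is again quasi-twilled, meaning $\hat{\phi}_2^{H}=0$. In view of \eqref{bracket-3}--\eqref{bracket-4}, this is precisely the statement that $\A_2$ is a subalgebra of $(\A,\Pi^{H})$. Hence the restriction of $\Pi^{H}$ to $\A_2$ automatically defines a Lie conformal algebra structure on $\A_2$; all that remains is to identify this restriction with the formula \eqref{eq:mul3}.

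To do so, I would use the exponential expression $\Pi^{H}_\lambda = e^{-\hat{H}}\circ\Pi_\lambda\circ(e^{\hat{H}}\otimes e^{\hat{H}})$ from Lemma \ref{lem:twist}(ii). Because $\hat{H}(a,v)=(H(v),0)$ and $\hat{H}\circ\hat{H}=0$, one has $e^{\hat{H}}(0,u) = (H(u),u)$ and $e^{-\hat{H}}(x,w) = (x-H(w),w)$. Applying the original quasi-twilled bracket \eqref{eq:quasi-twilled bracket} to $(H(u),u)$ and $(H(v),v)$ produces
\begin{align*}
\Pi_\lambda\bigl((H(u),u),(H(v),v)\bigr)=\bigl(&[H(u)_\lambda H(v)]_1+\rho_2(u)_\lambda H(v)-\rho_2(v)_{-\lambda-\partial}H(u),\\
&[u_\lambda v]_2+\rho_1(H(u))_\lambda v-\rho_1(H(v))_{-\lambda-\partial}u+\phi_{1\lambda}(H(u),H(v))\bigr).
\end{align*}
The second component of this expression is exactly $[u_\lambda v]^{H}$, so after applying $e^{-\hat{H}}$ the $\A_2$-part of $\Pi^{H}_\lambda((0,u),(0,v))$ is $[u_\lambda v]^H$, while the $\A_1$-part becomes the left-hand side minus $H$ of $[u_\lambda v]^H$, which vanishes precisely by the Maurer-Cartan identity displayed in Proposition \ref{pro:quasi-twilled to quasi-twilled}.

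Combining these two observations, $\Pi^{H}_\lambda((0,u),(0,v))=(0,[u_\lambda v]^{H})$, and since $\Pi^{H}$ is a Lie conformal algebra structure on $\A_1\oplus\A_2$ by Proposition \ref{pro:main twisting prop1}, its restriction $[\cdot_\lambda\cdot]^{H}$ inherits conformal sesquilinearity, skew-symmetry, and the Jacobi identity. There is no real obstacle here; the only thing to be careful about is verifying that the Maurer-Cartan equation indeed forces the $\A_1$-component to vanish, which is exactly the equivalent form given in the statement of Proposition \ref{pro:quasi-twilled to quasi-twilled}. Once this identification is made, the corollary is immediate.
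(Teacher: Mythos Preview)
Your argument is correct and follows essentially the same route as the paper: both use Proposition~\ref{pro:quasi-twilled to quasi-twilled} to conclude that $(\A,\Pi^{H})$ is quasi-twilled (so $\A_2$ is a subalgebra), and then identify the restricted bracket on $\A_2$ with~\eqref{eq:mul3}. The only cosmetic difference is that the paper phrases the first step via the bidegree decomposition (invoking $[\hat{\mu}_2^{H},\hat{\mu}_2^{H}]_{\NR}=0$ from Lemma~\ref{lem:quasi-twillL} and asserting that $\hat{\mu}_2^{H}|_{\A_2}$ equals~\eqref{eq:mul3}), whereas you carry out the identification explicitly using the conjugation formula of Lemma~\ref{lem:twist}(ii).
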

			\begin{proof}
				By Lemma \ref{lem:twillL}, we deduce that $\hat{\mu}_2^{H}$ restrited to $\A_2$ is a  Lie conformal structure on $\A_2$. Furthermore, $\hat{\mu}_2^{H}$ restricted to $\A_2$ is exactly \eqref{eq:mul3}.
			\end{proof}
			In the case of $\hat{\mu}_2=0$, the quasi-twilled Lie conformal algebra $(\A,\A_1,\A_2)$ is a $\phi$-twisted semi-direct product Lie conformal algebra $\A_1\ltimes_{\phi}\A_2$. In the following, we propose the notion of a twisted relative Rota-Baxter operator on a Lie conformal algebra, which can be used to twist a quasi-twilled Lie conformal algebra.
			
			\begin{defi}{\rm
					Let $(\A,[\cdot_\lambda\cdot])$ be a Lie conformal algebra, $(M;\rho)$ an $\A$-module and $\phi$ a $2$-cocycle  in $C^2(\A, M)$.  A $\C[\partial]$-module homomorphism $T:M\rightarrow \A$ is called a  {\bf $\phi$-twisted relative Rota-Baxter operator } 
					if it satisfies
					\begin{eqnarray}\label{eq:twisted relative RB conditon}
					[T(m)_\lambda T(n)]=T\big(\rho(T(m))_\lambda n-\rho(T(n))_{-\la-\partial}m+\phi_\lambda (T(m),T(n))\big),~~\mbox{for}~ m,n\in M.
					\end{eqnarray}}
			\end{defi}

			Let $(M;\rho)$ be a module over a Lie conformal algebra $\A$ and $\phi$ a $2$-cocycle associated to the module $(M;\rho)$. Let $\hat{\mu}+\hat{\phi}$ denote the structure of the quasi-twilled Lie conformal algebra $\A\ltimes_{\phi} M$.
			\begin{prop}\label{pro:TRB-MC}
				A $\C[\partial]$-module homomorphism $T:M\rightarrow \A$ is  a $\phi$-twisted relative Rota-Baxter operator 
				if and only if the lift $\hat{T}$ is a  Maurer-Cartan element of the $L_\infty$-algebra $(C^*(M,\A),d_{\hat{\mu}_2}=0,[-,-]_{{\hat{\mu}_1}},[-,-,-]_{\hat{\phi}_1})$  given in Theorem \ref{quasi-as-shLie}, namely, $$\half[\hat{T},\hat{T}]_{\hat{\mu_1}}+\frac{1}{6}[\hat{T},\hat{T},\hat{T}]_{\hat{\phi}_1}=0,$$
				where $\mu_1=\mu$, $\mu_2=0$ and $\phi_1=\phi$.
			\end{prop}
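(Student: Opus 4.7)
The plan is to deduce the claim by specializing Proposition \ref{pro:quasi-twilled to quasi-twilled} to the quasi-twilled Lie conformal algebra $\A \ltimes_\phi M$ provided by Corollary \ref{pro:twisted semi-direct product}. First I would identify the bidegree decomposition $\Pi = \hat{\phi}_1 + \hat{\mu}_1 + \hat{\mu}_2$ of the $\phi$-twisted semi-direct product structure \eqref{twist-sum} under the labels $\A_1 = \A$ and $\A_2 = M$. Reading off \eqref{bracket-1}--\eqref{bracket-3}, we get $[a_\lambda b]_1 = [a_\lambda b]$, $\rho_1 = \rho$, and $\phi_{1\lambda}(a,b) = \phi_\lambda(a,b)$, whereas $[u_\lambda v]_2 = 0$ and $\rho_2 = 0$ because $M$ carries no bracket and does not act on $\A$ in the semi-direct product. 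In particular $\hat{\mu}_2 = 0$.

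Once $\hat{\mu}_2 = 0$, the differential $d_{\hat{\mu}_2} = [\hat{\mu}_2, -]_{\NR}$ on the $L_\infty$-algebra $(C^*(M,\A), d_{\hat{\mu}_2}, [-,-]_{\hat{\mu}_1}, [-,-,-]_{\hat{\phi}_1})$ of Theorem \ref{quasi-as-shLie} is identically zero, so the Maurer-Cartan equation collapses to exactly
\begin{equation*}
\half[\hat{T},\hat{T}]_{\hat{\mu}_1} + \frac{1}{6}[\hat{T},\hat{T},\hat{T}]_{\hat{\phi}_1} = 0.
\end{equation*}
Now I would invoke the explicit $\lambda$-level reformulation established in Proposition \ref{pro:quasi-twilled to quasi-twilled}: this Maurer-Cartan equation is equivalent to
\begin{equation*}
[H(u)_\lambda H(v)]_1 + \rho_2(u)_\lambda H(v) - \rho_2(v)_{-\lambda-\partial} H(u) = H\bigl(\rho_1(H(u))_\lambda v - \rho_1(H(v))_{-\lambda-\partial} u\bigr) + H([u_\lambda v]_2) + H(\phi_{1\lambda}(H(u),H(v))).
\end{equation*}
Substituting $H = T$ together with the identifications above (so that the $\rho_2$-terms and the $[u_\lambda v]_2$-term disappear), the equation becomes
\begin{equation*}
[T(u)_\lambda T(v)] = T\bigl(\rho(T(u))_\lambda v - \rho(T(v))_{-\lambda-\partial} u + \phi_\lambda(T(u),T(v))\bigr),
\end{equation*}
which is precisely \eqref{eq:twisted relative RB conditon}. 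Both implications follow simultaneously from the equivalence.

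The only nontrivial point is that Proposition \ref{pro:quasi-twilled to quasi-twilled} is genuinely applicable here, i.e.\ that the bookkeeping done in its proof (lifting $H$ to $\hat{H}\in C^{-1|1}(\A\oplus M,\A\oplus M)$, tracking bidegrees through the iterated NR-brackets, and checking that the output on arguments in $M^{\otimes 2}$ lands in $\A$) is insensitive to the vanishing of $\mu_2$. This is automatic from Lemma \ref{lem:bidegree perserve} and Lemma \ref{lem:zero condition}. As a sanity check one could evaluate $[\hat{T},\hat{T}]_{\hat{\mu}_1}$ and $[\hat{T},\hat{T},\hat{T}]_{\hat{\phi}_1}$ on a pair $(u,v)\in M^{\otimes 2}$ directly from \eqref{eq:shLie2}--\eqref{eq:shLie3}, but this is routine and duplicates the computation already performed in the proof of Proposition \ref{pro:quasi-twilled to quasi-twilled}; hence the main obstacle is essentially bookkeeping, and no new algebraic input is required.
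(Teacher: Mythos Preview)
Your proposal is correct and follows exactly the same approach as the paper, which simply states that the result follows from Proposition~\ref{pro:quasi-twilled to quasi-twilled}. You have merely spelled out in detail how the specialization $\A_1=\A$, $\A_2=M$, $\mu_2=0$, $\rho_2=0$, $[\cdot_\lambda\cdot]_2=0$ reduces that proposition to the twisted relative Rota-Baxter condition, which is precisely the intended argument.
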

			\begin{proof}
				It follows from Proposition \ref{pro:quasi-twilled to quasi-twilled}.
			\end{proof}
			
			The author in \cite{Getzler} constructed a new $L_{\infty}$-algebra from an old $L_{\infty}$-algebra along with a  Maurer-Cartan element. As a direct application, we have
			\begin{prop}\label{pro:twisted L_infty}
				Let $T$ be a $\phi$-twisted relative Rota-Baxter operator on a module $(M;\rho)$ over a Lie conformal algebra $\A$. Then  $(C^*(M,\A),l_1^T,l_2^T,l_3^T)$ is an $L_{\infty}$-algebra with trivial higher brackets, where
				$$l_1^T(f)=[\hat{T},\hat{f}]_{{\hat{\mu}_1}}+\half[\hat{T},\hat{T},\hat{f}]_{\hat{\phi}_1},\quad l_2^T(f,g)=[\hat{f},\hat{g}]_{{\hat{\mu}_1}}+[\hat{T},\hat{f},\hat{g}]_{\hat{\phi}_1},\quad l_3^T(f,g,h)=[\hat{f},\hat{g},\hat{h}]_{\hat{\phi}_1},$$
				for all $f\in C^m(M,\A),~g\in C^n(M,\A),~h\in C^k(M,\A).$
				We call this a {\bf $T$-twisted $L_{\infty}$-algebra}.
			\end{prop}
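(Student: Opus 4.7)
The plan is to invoke Getzler's twisting construction from \cite{Getzler}, which produces a new $L_\infty$-algebra from an old one together with a chosen Maurer-Cartan element, and then specialize it to the $L_\infty$-algebra of Theorem \ref{quasi-as-shLie}. Concretely, I would view the $\phi$-twisted semi-direct product $\A\ltimes_\phi M$ as a quasi-twilled Lie conformal algebra with $\A_1=\A$, $\A_2=M$, $\mu_1=\mu$, $\mu_2=0$, and $\phi_1=\phi$. Theorem \ref{quasi-as-shLie} then endows $C^*(M,\A)$ with an $L_\infty$-structure whose only nontrivial operations are $l_2=[-,-]_{\hat{\mu}_1}$ and $l_3=[-,-,-]_{\hat{\phi}_1}$; because $\mu_2=0$ the differential $l_1=d_{\hat{\mu}_2}$ vanishes, and by construction $l_k=0$ for all $k\geq 4$.

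Next, I would apply Proposition \ref{pro:TRB-MC} to observe that $\hat{T}$ is a Maurer-Cartan element of this $L_\infty$-algebra, precisely because $T$ is a $\phi$-twisted relative Rota-Baxter operator. Getzler's twisting theorem then guarantees that the collection
\begin{equation*}
l_k^T(f_1,\ldots,f_k)=\sum_{n\geq 0}\frac{1}{n!}\,l_{k+n}(\underbrace{\hat{T},\ldots,\hat{T}}_{n},f_1,\ldots,f_k)
\end{equation*}
is again an $L_\infty$-structure on $C^*(M,\A)$, without any further axiom check. Since $l_j=0$ for $j=1$ and for $j\geq 4$, this universal sum truncates to exactly three nonzero terms, yielding $l_1^T(f)=l_2(\hat{T},f)+\tfrac{1}{2}l_3(\hat{T},\hat{T},f)$, $l_2^T(f,g)=l_2(f,g)+l_3(\hat{T},f,g)$, and $l_3^T(f,g,h)=l_3(f,g,h)$, with $l_k^T=0$ for $k\geq 4$.

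The remaining work is purely bookkeeping: unfolding $l_2$ and $l_3$ according to \eqref{eq:shLie2} and \eqref{eq:shLie3} and checking that the degree/sign conventions match the formulas stated in the proposition, i.e.\ that the three brackets reduce to $[\hat{T},\hat{f}]_{\hat{\mu}_1}+\tfrac{1}{2}[\hat{T},\hat{T},\hat{f}]_{\hat{\phi}_1}$, $[\hat{f},\hat{g}]_{\hat{\mu}_1}+[\hat{T},\hat{f},\hat{g}]_{\hat{\phi}_1}$ and $[\hat{f},\hat{g},\hat{h}]_{\hat{\phi}_1}$. There is no substantive obstacle; the only care needed is in reconciling the Koszul signs implicit in Getzler's formulation with the NR-bracket-based conventions of the paper, but since these were already fixed consistently in the proof of Theorem \ref{quasi-as-shLie}, they propagate correctly to the twisted brackets here.
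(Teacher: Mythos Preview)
Your proposal is correct and follows essentially the same approach as the paper: the paper states this proposition as a ``direct application'' of Getzler's twisting construction \cite{Getzler}, using Proposition~\ref{pro:TRB-MC} to identify $\hat{T}$ as a Maurer-Cartan element of the $L_\infty$-algebra from Theorem~\ref{quasi-as-shLie} (with $\mu_2=0$), exactly as you describe. The only difference is that you spell out the truncation of Getzler's universal formula explicitly, which the paper leaves implicit.
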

			
			\begin{prop}
				Let $T:M\rightarrow \A$ be a $\phi$-twisted relative Rota-Baxter operator.
				Then for any $\C[\partial]$-module homomorphism $T':M\rightarrow \A$, $T+T'$ is also a $\phi$-twisted relative Rota-Baxter operator  if and only $T'$ is a  Maurer-Cartan element of the $T$-twisted $L_{\infty}$-algebra from Proposition \ref{pro:twisted L_infty}.
			\end{prop}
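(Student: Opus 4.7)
The plan is to reduce the statement to a purely algebraic identity in the graded Lie algebra $(C^*(\A,\A),[-,-]_{\NR})$ by unpacking both sides via the characterizations established earlier in the paper.

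First I would apply Proposition \ref{pro:TRB-MC} to $T+T'$: since lifting is linear we have $\widehat{T+T'}=\hat{T}+\hat{T}'$, so $T+T'$ is a $\phi$-twisted relative Rota-Baxter operator if and only if
\begin{equation*}
\tfrac{1}{2}[\hat{T}+\hat{T}',\hat{T}+\hat{T}']_{\hat{\mu}_{1}}+\tfrac{1}{6}[\hat{T}+\hat{T}',\hat{T}+\hat{T}',\hat{T}+\hat{T}']_{\hat{\phi}_{1}}=0.
\end{equation*}
Next I would expand both brackets multilinearly. The key check is that $[-,-]_{\hat{\mu}_1}$ and $[-,-,-]_{\hat{\phi}_1}$, built from the NR bracket by the formulas \eqref{eq:shLie2}--\eqref{eq:shLie3}, are (graded) symmetric in the relevant arguments when restricted to degrees corresponding to the homogeneous cochain $\hat{T}$, whose parity is fixed. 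This is a short verification using the graded Jacobi identity for $[-,-]_{\NR}$ together with Lemma \ref{lem:bidegree perserve}; once it is in hand, the binomial expansion gives three and four terms respectively.

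Then I would use the hypothesis on $T$ in the form of Proposition \ref{pro:TRB-MC}, namely $\tfrac{1}{2}[\hat{T},\hat{T}]_{\hat{\mu}_1}+\tfrac{1}{6}[\hat{T},\hat{T},\hat{T}]_{\hat{\phi}_1}=0$, to cancel the pure-$\hat{T}$ terms from the expansion. What remains is exactly
\begin{equation*}
[\hat{T},\hat{T}']_{\hat{\mu}_1}+\tfrac{1}{2}[\hat{T}',\hat{T}']_{\hat{\mu}_1}+\tfrac{1}{2}[\hat{T},\hat{T},\hat{T}']_{\hat{\phi}_1}+\tfrac{1}{2}[\hat{T},\hat{T}',\hat{T}']_{\hat{\phi}_1}+\tfrac{1}{6}[\hat{T}',\hat{T}',\hat{T}']_{\hat{\phi}_1}=0.
\end{equation*}
Finally I would compare this with the Maurer-Cartan equation $l_1^T(T')+\tfrac{1}{2}l_2^T(T',T')+\tfrac{1}{6}l_3^T(T',T',T')=0$ of the $T$-twisted $L_\infty$-algebra, using the explicit formulas for $l_1^T,l_2^T,l_3^T$ from Proposition \ref{pro:twisted L_infty}. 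A term-by-term match identifies the two equations, completing the equivalence.

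The main obstacle is not conceptual but notational: keeping the combinatorial factors and graded signs straight when expanding the binomial/trinomial. In particular, one needs to confirm that cross terms such as $[\hat{T},\hat{T}',\hat{T}]_{\hat{\phi}_1}$ equal $[\hat{T},\hat{T},\hat{T}']_{\hat{\phi}_1}$ (up to sign), so that the count of six orderings collapses correctly to the claimed $\tfrac{1}{2}$-coefficients; this symmetry is the content of the routine check made in Theorem \ref{quasi-as-shLie} that the structure maps descend to well-defined $L_\infty$-operations on $C^*(\A_2,\A_1)$. Once that symmetry is recorded, the remainder is straightforward.
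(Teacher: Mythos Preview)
Your proposal is correct and is precisely the ``direct calculation'' that the paper alludes to but omits: expand the Maurer-Cartan condition of Proposition~\ref{pro:TRB-MC} for $\hat{T}+\hat{T}'$, cancel the pure $\hat{T}$ terms, and match against the formulas for $l_1^T,l_2^T,l_3^T$ in Proposition~\ref{pro:twisted L_infty}. The only thing to watch is that $\hat{T},\hat{T}'\in C^1(M,\A)$ have the same degree, so the graded symmetry of $[-,-]_{\hat{\mu}_1}$ and $[-,-,-]_{\hat{\phi}_1}$ reduces to ordinary symmetry and the binomial coefficients come out exactly as you wrote.
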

			\begin{proof}
				It follows by a direct calculation.
			\end{proof}	
			
			By Proposition \ref{pro:TRB-MC} and Corollary \ref{cor:new LCA}, we have
			\begin{coro}\label{cor:twist-LCA2}
				Let $T$ be a $\phi$-twisted relative Rota-Baxter operator. Then 
				\begin{itemize}
					\item[{\rm(i)}]
					$(M,[\cdot_\lambda\cdot]^{T,\phi})$ is a Lie  conformal algebra, where the $\lambda$-bracket $[\cdot_\lambda\cdot]^{T,\phi}$ is given by
					\begin{equation}\label{eq:LCA2}
					[m_\lambda n]^{T,\phi}=\rho(T(m))_\lambda n-\rho(T(n))_{-\lambda-\partial}m+\phi_\lambda(T(m),T(n)), ~~\mbox{for}~m,n\in M.
					\end{equation}
					Furthermore, $T$ is a Lie conformal algebra homomorphism from $(M,[\cdot_\lambda\cdot]^{T,\phi})$ to $\A$.
					\item[{\rm (ii)}] $(\A\oplus M,\Pi^T=\hat{\mu}_1^{T}+\hat{\mu}_2^{T}+\hat{\phi}_1^{T})$ is a quasi-twilled Lie conformal algebra, where $$\hat{\mu}_1^{T}=\hat{\mu},~~	\hat{\mu}_2^{T}=[\hat{\mu},\hat{T}]_{\NR}+\half[[\hat{\phi},\hat{T}]_{\NR},\hat{T}]_{\NR},~~ \hat{\phi}_1^{T}=\hat{\phi}.$$
				\end{itemize}
			\end{coro}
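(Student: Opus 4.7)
The plan is to deduce both assertions as direct specializations of the machinery already built for quasi-twilled Lie conformal algebras, applied to the particular quasi-twilled structure $\A\ltimes_\phi M$ on $\A\oplus M$ determined by the $2$-cocycle $\phi$. Under this identification we have $\A_1=\A$, $\A_2=M$, $\mu_1=\mu$ (the semi-direct product structure), $\mu_2=0$, and $\phi_1=\phi$, while $\phi_2=0$ because $\phi$ takes values in $M$, not in $\A$. Proposition \ref{pro:TRB-MC} then reads: $T$ is a $\phi$-twisted relative Rota-Baxter operator exactly when $\hat{T}$ is a Maurer-Cartan element of the $L_\infty$-algebra $(C^*(M,\A),0,[-,-]_{\hat{\mu}},[-,-,-]_{\hat{\phi}})$ coming from Theorem \ref{quasi-as-shLie}.

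For part (i), I would first invoke Corollary \ref{cor:new LCA}, which says that whenever $H$ is a Maurer-Cartan element of the $L_\infty$-algebra attached to a quasi-twilled Lie conformal algebra $(\A,\A_1,\A_2)$, formula \eqref{eq:mul3} defines a Lie conformal algebra structure on $\A_2$. Specializing the data $(\mu_1,\mu_2,\phi_1)=(\mu,0,\phi)$, $\rho_1=\rho$, $[\cdot_\lambda\cdot]_2=0$, and $H=T$ to \eqref{eq:mul3} reproduces the formula \eqref{eq:LCA2} for $[\cdot_\lambda\cdot]^{T,\phi}$ verbatim. The homomorphism property $T([m_\lambda n]^{T,\phi})=[T(m)_\lambda T(n)]$ is then nothing but the defining equation \eqref{eq:twisted relative RB conditon} of a $\phi$-twisted relative Rota-Baxter operator, so no further work is needed.

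For part (ii), I would apply Proposition \ref{pro:quasi-twilled to quasi-twilled}: since $\hat{T}$ is a Maurer-Cartan element, the twist $\Pi^T=e^{X_{\hat{T}}}(\hat{\mu}+\hat{\phi})$ is again a quasi-twilled Lie conformal algebra structure on $\A\oplus M$. The explicit decomposition $\Pi^T=\hat{\phi}_1^T+\hat{\mu}_1^T+\hat{\mu}_2^T$ is then obtained by specializing the general twisting formulas \eqref{twisting-1}--\eqref{twisting-4} of Theorem \ref{thm:twist} to our setting $\hat{\mu}_2=0$ and $\hat{\phi}_2=0$: formula \eqref{twisting-1} gives $\hat{\phi}_1^T=\hat{\phi}$; formula \eqref{twisting-3} collapses to $\hat{\mu}_2^T=d_{\hat{\mu}}\hat{T}+\tfrac{1}{2}[[\hat{\phi},\hat{T}]_{\NR},\hat{T}]_{\NR}=[\hat{\mu},\hat{T}]_{\NR}+\tfrac{1}{2}[[\hat{\phi},\hat{T}]_{\NR},\hat{T}]_{\NR}$; and formula \eqref{twisting-4} vanishes identically because it equals the Maurer-Cartan expression for $\hat{T}$, consistent with the fact that $\Pi^T$ is quasi-twilled. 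The identification $\hat{\mu}_1^T=\hat{\mu}$ comes from \eqref{twisting-2} together with the bidegree bookkeeping.

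The argument is essentially a translation exercise, so there is no substantial technical obstacle: once one sets up the dictionary between the $\phi$-twisted semi-direct product and the general quasi-twilled framework, both claims are immediate consequences of previously established results. The only place one must be careful is the bidegree bookkeeping when reading off $\hat{\mu}_1^T$, $\hat{\mu}_2^T$, and $\hat{\phi}_1^T$ from Theorem \ref{thm:twist}; one must track which nested NR brackets survive after imposing $\hat{\mu}_2=0$ and $\hat{\phi}_2=0$, and use Lemma \ref{lem:bidegree perserve} to confirm the bidegrees of the surviving summands match the claimed decomposition.
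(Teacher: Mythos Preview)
Your approach is the paper's own: the corollary is presented there as an immediate consequence of Proposition~\ref{pro:TRB-MC} and Corollary~\ref{cor:new LCA}, and you correctly unpack this by specializing the quasi-twilled machinery to $\A\ltimes_\phi M$ with $\hat{\mu}_2=0$, invoking Proposition~\ref{pro:quasi-twilled to quasi-twilled} and Theorem~\ref{thm:twist} for part~(ii).

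One step does not go through as you describe it. You write that ``the identification $\hat{\mu}_1^T=\hat{\mu}$ comes from \eqref{twisting-2} together with the bidegree bookkeeping,'' but \eqref{twisting-2} gives $\hat{\mu}_1^T=\hat{\mu}_1+[\hat{\phi}_1,\hat{T}]_{\NR}=\hat{\mu}+[\hat{\phi},\hat{T}]_{\NR}$, and the term $[\hat{\phi},\hat{T}]_{\NR}$ is \emph{not} zero: it has bidegree $1|0$ by Lemma~\ref{lem:bidegree perserve} and, for example, sends $((a,0),(0,n))$ to $(0,\phi_\lambda(a,T(n)))$ and $((a,0),(b,0))$ to $(-T\phi_\lambda(a,b),0)$. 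So the bookkeeping you flag as the delicate point, carried out honestly, contradicts the stated formula $\hat{\mu}_1^T=\hat{\mu}$ rather than confirming it. This is in fact a misprint in the paper's statement of the corollary: the explicit $\lambda$-bracket on $\A\bowtie_\phi M_{T,\phi}$ displayed in the proposition immediately afterward contains the terms $\phi_\lambda(T(m),b)+\phi_\lambda(a,T(n))$ in the $M$-component, and these arise precisely from the missing $[\hat{\phi},\hat{T}]_{\NR}$ summand. Your outline is otherwise sound; just do not claim that \eqref{twisting-2} collapses to $\hat{\mu}$ here.
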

			
			
			Given a $\phi$-twisted  relative Rota-Baxter operator $T$ on a module $(M;\rho)$ over a Lie conformal algebra $\A$, we have a quasi-twilled Lie conformal algebra $\A\bowtie_\phi M_{T,\phi}$ by twisting $\A\ltimes_\phi M$ by $T$, where $M_{T,\phi}=(M,[\cdot_\lambda\cdot]^{T,\phi})$ is
			given by \eqref{eq:LCA2}. Define $\rho^T:M\rightarrow {\rm Cend}(\A)$ by
			\begin{eqnarray}
			\label{eq:T-Rep2}\rho^T(m)_{\lambda}a:=(\hat{\mu}_2^{T})_\lambda ((0,m),(a,0))
			=[T(m)_\lambda a]+T(\rho(a)_{-\lambda-\partial}m)-T(\phi_\lambda(T(m),a)),
			\end{eqnarray}
			for $a\in\A,~m\in M.$  By $[\hat{\mu}_2^{T},\hat{\mu}_2^{T}]_{\NR}=0$ and Proposition \ref{pro:MC}, $(\A; \rho^T)$ is a module of 
			$M_{T,\phi}$. Furthermore, by \eqref{eq:quasi-twilled bracket}, we have
			
			\begin{prop}
				The Lie conformal algebra structure on $\A\bowtie_\phi M_{T,\phi}$ is explicitly given by
				\begin{align*}\label{eq:twilled bracket3}
				{[(a,m)_\lambda (b,n)]_{\bowtie_\phi  }}=&([a_\lambda b]+\rho^T(m)_{\lambda}b-\rho^T(n)_{-\lambda-\partial}a,[m_\lambda n]^{T,\phi}+\rho(a)_{\lambda}n-\rho(b)_{-\lambda-\partial} m\\
				\nonumber	&+\phi_\lambda(H(m),b)+\phi_\lambda(a,H(n))+\phi_\lambda(a,b)),
				\end{align*}
				for $a,b\in\A$ and $m,n\in\ M.$
			\end{prop}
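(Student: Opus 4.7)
The plan is to prove this proposition by unfolding the conjugation formula from Lemma~\ref{lem:twist}(ii), namely
\[
\Pi^T_\lambda((a,m),(b,n)) \;=\; e^{-\hat T}\!\bigl(\Pi_\lambda((a+T(m),m),(b+T(n),n))\bigr),
\]
where $\Pi = \hat\mu + \hat\phi$ is the $\phi$-twisted semi-direct product and $e^{\pm\hat T}(x,y) = (x \pm T(y), y)$. First I would expand $\Pi_\lambda$ on the shifted arguments using the explicit form \eqref{twist-sum} of the $\phi$-twisted semi-direct bracket; the result is a sum over sixteen pieces, classified by which of $\{a,T(m)\}$ and $\{b,T(n)\}$ enter the bracket, the module action, or the cocycle. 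Then applying $e^{-\hat T} = \Id - \hat T$ pushes all $T$-corrections into the $\A$-component.

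The key reduction is the substitution of the defining identity \eqref{eq:twisted relative RB conditon} of a $\phi$-twisted relative Rota-Baxter operator,
\[
[T(m)_\lambda T(n)] \;=\; T\bigl(\rho(T(m))_\lambda n - \rho(T(n))_{-\lambda-\partial}m + \phi_\lambda(T(m),T(n))\bigr),
\]
which cancels the three purely $T$-generated summands appearing in the $\A$-component against the matching pieces produced by $-\hat T$. The remaining terms in the $\A$-component reassemble into $[a_\lambda b] + \rho^T(m)_\lambda b - \rho^T(n)_{-\lambda-\partial}a$ using the defining formula \eqref{eq:T-Rep2} for $\rho^T$ together with the skew-symmetries $[a_\lambda T(n)] = -[T(n)_{-\lambda-\partial}a]$ and $\phi_{-\lambda-\partial}(T(n),a) = -\phi_\lambda(a,T(n))$.

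For the $M$-component no $\hat T$-correction is introduced, and one only needs to collect terms by their source: the three $\rho$-terms built from $T(m)$ or $T(n)$, together with $\phi_\lambda(T(m),T(n))$, reassemble into $[m_\lambda n]^{T,\phi}$ via \eqref{eq:LCA2}; the two $\rho$-terms involving $a,b$ alone give the contribution $\rho(a)_\lambda n - \rho(b)_{-\lambda-\partial}m$; and the three remaining cocycle terms $\phi_\lambda(a,b)$, $\phi_\lambda(T(m),b)$, $\phi_\lambda(a,T(n))$ yield the cocycle correction stated in the proposition. As an independent check, one can alternatively invoke Corollary~\ref{cor:twist-LCA2}(ii) to identify $\A\oplus M$ with the quasi-twilled structure $\hat\mu_1^T+\hat\mu_2^T+\hat\phi_1^T$ and apply the general quasi-twilled bracket formula \eqref{eq:quasi-twilled bracket}, reading off $\rho_1=\rho$, $\rho_2=\rho^T$, $[\,\cdot_\lambda\cdot\,]_2=[\,\cdot_\lambda\cdot\,]^{T,\phi}$, and the cocycle $\phi_1$ coming from $\hat\phi_1^T=\hat\phi$ together with the $\rho_1$-deformation produced by $[\hat\phi,\hat T]_\NR$ inside $\hat\mu_1^T$.

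The main obstacle is purely combinatorial: the careful bookkeeping of signs produced by the NR-bracket conventions, by the skew-symmetric rewriting of $[\,\cdot_\lambda\cdot\,]$ and $\phi$, and by the shift $\lambda\mapsto\lambda^\dagger$ when $T$ is moved across the second argument. Since $\hat T$ is nilpotent (bidegree $-1|1$ with $\hat T\circ\hat T=0$), the expansions truncate at low order and every term in the final formula can be traced unambiguously to exactly one piece of the expansion.
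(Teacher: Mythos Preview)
Your conjugation approach via Lemma~\ref{lem:twist}(ii) is sound and is in fact more explicit than what the paper does: the paper simply invokes the general quasi-twilled bracket formula \eqref{eq:quasi-twilled bracket}, which is exactly your ``independent check''. So methodologically you are fine, and your identification of $[\hat\phi,\hat T]_{\NR}$ as the source of the extra cocycle terms $\phi_\lambda(T(m),b)+\phi_\lambda(a,T(n))$ is correct.

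There is, however, a bookkeeping slip that your sketch glosses over. Carry out your expansion on the pair $((a,0),(b,0))$: you get
\[
e^{-\hat T}\bigl(\Pi_\lambda((a,0),(b,0))\bigr)=e^{-\hat T}\bigl([a_\lambda b],\,\phi_\lambda(a,b)\bigr)=\bigl([a_\lambda b]-T\phi_\lambda(a,b),\,\phi_\lambda(a,b)\bigr).
\]
Thus the $\A$-component of $\Pi^T$ contains an extra term $-T\phi_\lambda(a,b)$ which does \emph{not} appear in the formula you are aiming for. Equivalently, in your alternative route, the bidegree-$1|0$ piece $[\hat\phi,\hat T]_{\NR}$ deforms not only $\rho_1$ (which you noted) but also the $\A_1$-bracket: the new $[a_\lambda b]_1$ is $[a_\lambda b]-T\phi_\lambda(a,b)$, not $[a_\lambda b]$. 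Your summary ``the remaining terms in the $\A$-component reassemble into $[a_\lambda b]+\rho^T(m)_\lambda b-\rho^T(n)_{-\lambda-\partial}a$'' therefore drops one term. This is consistent with the fact that Corollary~\ref{cor:twist-LCA2}(ii), as written, lists $\hat\mu_1^T=\hat\mu$ rather than the $\hat\mu_1^T=\hat\mu+[\hat\phi,\hat T]_{\NR}$ dictated by Theorem~\ref{thm:twist}; the stated proposition inherits this omission in its first component. Your computation is the right one---just be aware that it does not reproduce the displayed formula verbatim, and flag the discrepancy rather than forcing the terms to match.
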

			
			Assume that $T:M\rightarrow \A$ is a $\C[\partial]$-module homomorphism. We denote the graph of $T$ by ${\rm Gr}(T)$,
			\begin{align*}
			{\rm Gr}(T)=\{(T(m),m)|m\in M\}.
			\end{align*}
			\begin{prop}\label{p11}
				Let $(M;\rho)$ be a module over a Lie conformal algebra $\A$ and $\phi$ a $2$-cocycle in $C^2(\A, M)$. Then $T:M\rightarrow \A$ is a $\phi$-twisted relative Rota-Baxter operator if and only if ${\rm Gr}(T)$ is a subalgebra of $\A\ltimes_{\phi}M$.
			\end{prop}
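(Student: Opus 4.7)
The plan is to prove Proposition \ref{p11} by a direct computation, showing that the condition ${\rm Gr}(T)\subseteq \A\ltimes_\phi M$ being closed under the $\lambda$-bracket is literally the defining identity \eqref{eq:twisted relative RB conditon} of a $\phi$-twisted relative Rota-Baxter operator.

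First I would recall the $\lambda$-bracket on the $\phi$-twisted semi-direct product from Corollary \ref{pro:twisted semi-direct product}:
\begin{align*}
[(a,m)_\lambda(b,n)]^\phi=\big([a_\lambda b],\ \rho(a)_\lambda n-\rho(b)_{-\lambda-\partial}m+\phi_\lambda(a,b)\big).
\end{align*}
Then I would take two generic elements $(T(m),m),(T(n),n)\in{\rm Gr}(T)$ and compute
\begin{align*}
[(T(m),m)_\lambda(T(n),n)]^\phi=\big([T(m)_\lambda T(n)],\ \rho(T(m))_\lambda n-\rho(T(n))_{-\lambda-\partial}m+\phi_\lambda(T(m),T(n))\big).
\end{align*}
An element $(x,y)\in\A\oplus M$ lies in ${\rm Gr}(T)$ if and only if $x=T(y)$. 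Therefore ${\rm Gr}(T)$ is closed under the $\lambda$-bracket if and only if
\begin{align*}
[T(m)_\lambda T(n)]=T\bigl(\rho(T(m))_\lambda n-\rho(T(n))_{-\lambda-\partial}m+\phi_\lambda(T(m),T(n))\bigr),
\end{align*}
which is precisely \eqref{eq:twisted relative RB conditon}.

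Next I would check that ${\rm Gr}(T)$ is stable under $\partial$: since $T$ is a $\C[\partial]$-module homomorphism,
$\partial(T(m),m)=(\partial T(m),\partial m)=(T(\partial m),\partial m)\in{\rm Gr}(T)$,
so ${\rm Gr}(T)$ is a $\C[\partial]$-submodule. All the Lie conformal algebra axioms (conformal sesquilinearity, skew-symmetry, Jacobi identity) are inherited automatically from $\A\ltimes_\phi M$, so closure under the $\lambda$-bracket together with $\partial$-stability is exactly the subalgebra condition.

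There is essentially no obstacle here; the only subtle point is ensuring that the second component $\rho(T(m))_\lambda n-\rho(T(n))_{-\lambda-\partial}m+\phi_\lambda(T(m),T(n))\in M[\lambda]$ (which follows by definition of $\rho$ and $\phi$) is indeed the correct witness that makes the first component lie in the image of $T$; this is why the equivalence is an "if and only if" rather than a one-way implication.
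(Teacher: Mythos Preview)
Your proof is correct and follows essentially the same approach as the paper: compute the $\lambda$-bracket of two generic elements of ${\rm Gr}(T)$ in $\A\ltimes_\phi M$ and observe that membership of the result in ${\rm Gr}(T)[\lambda]$ is precisely the identity \eqref{eq:twisted relative RB conditon}. You even add the verification that ${\rm Gr}(T)$ is a $\C[\partial]$-submodule, which the paper leaves implicit.
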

			\begin{proof} For any $(T(m),m)$, $(T(n),n)\in {\rm Gr}(T)$, we have
				\begin{align*}
				[(T(m),m)_\lambda (T(n),n)]=\big([T(m)_\lambda T(n)], \rho(T(m))_\lambda n-\rho(T(n))_{-\lambda-\partial} m+\phi_\lambda(T(m),T(n))\big).
				\end{align*}
				Hence $T$ is a $\phi$-twisted relative Rota-Baxter operator if and only if $\big([T(m)_\lambda T(n)], \rho(T(m))_\lambda n-\rho(T(n))_{-\lambda-\partial} m++\phi_\lambda(T(m),T(n))\big)$ is in ${\rm Gr}(T)[\lambda]$.
			\end{proof}
			
			\begin{ex} {\rm Let $\omega:\A \rightarrow M$ be an invertible $1$-cochain in $C^1(\A, M)$. Set $\phi=-\mathbf{d}\omega$. Then the inverse $\omega^{-1}:M\rightarrow\A$ is a $\phi$-twisted relative Rota-Baxter operator. In fact, putting $T=\omega^{-1}$, the condition \eqref{eq:twisted relative RB conditon} is equivalent to
					\begin{eqnarray*}
						\omega([T(m)_\lambda T(n)])=\rho(T(m))_\lambda n-\rho(T(n))_{-\lambda-\p}m +\phi_\lambda(T(m),T(n)), ~~ \forall ~~ m,n\in M.
					\end{eqnarray*}
					This is the same as
					\begin{align*}
					\phi_\lambda(T(m),T(n)) =-\rho(T(m))_\lambda n+\rho(T(n))_{-\lambda-\p}m+\omega([T(m)_\lambda T(n)])=-(\mathbf{d}\omega)_\lambda(T(m),T(n)).
					\end{align*}
				}\end{ex}
				\begin{ex} {\rm
						Let $\A$ be a Lie conformal algebra and  $\phi\in C^2(\A,\A)$ defined by $\phi_\lambda(a,b)=-[a_\lambda b]$, for $a,b\in\A$.
						Since $\phi$ is a $2$-cocycle, ${\rm Id}: \A\rightarrow \A$ is a $\phi$-twisted relative Rota-Baxter operator.}\end{ex}

				\begin{defi}{\rm \label{def3}
						Let $\A$ be a Lie conformal algebra. A $\C[\partial]$-module homomorphism $N:\A\rightarrow \A$ is called a {\bf Nijenhuis operator} on $\A$ if the following condition is satisfied for all $a,b\in \A$:
						\begin{eqnarray}\label{N}
						[N(a)_\lambda N(b)]=N\big([N(a)_\lambda b]+[a_\lambda N(b)]-N[a_\lambda b]\big).
						\end{eqnarray}
					}\end{defi}
					
					\begin{lemm}{\rm(\cite{LZ})}\label{lem:Nijenhuis}
						Let $N:\A\rightarrow \A$ be a Nijenhuis operator on a Lie  conformal algebra $(\A,[\cdot_\lambda \cdot])$. Then $\A^N:=(\A,[\cdot_\lambda \cdot]_N)$ is also a Lie conformal algebra, where $[\cdot_\lambda \cdot]_N$ is defined by
						\begin{equation}
						[a _\lambda b]_N=[N(a)_\lambda b]+[a_\lambda N(b)]-N[a_\lambda b], ~ \mbox{for}  ~ a,b\in \A,
						\end{equation}
						and $N$ is a Lie conformal algebra homomorphism from $\A^N$  to $\A$.
					\end{lemm}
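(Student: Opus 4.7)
The plan is to verify the three axioms of a Lie conformal algebra for $[\cdot_\lambda\cdot]_N$ and then the homomorphism property. First, conformal sesquilinearity of $[\cdot_\lambda\cdot]_N$ is immediate from sesquilinearity of the original bracket together with the fact that $N$ is $\C[\partial]$-linear, which ensures $N$ commutes with $\partial$; each of the three terms in the definition is sesquilinear on its own. Similarly, the homomorphism property is the content of the Nijenhuis condition itself: directly from \eqref{N} we read off
\begin{equation*}
N([a_\lambda b]_N)=N\bigl([N(a)_\lambda b]+[a_\lambda N(b)]-N[a_\lambda b]\bigr)=[N(a)_\lambda N(b)],
\end{equation*}
so $N:\A^N\to\A$ preserves $\lambda$-brackets.

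For skew-symmetry, I would apply the original skew-symmetry to each of the three terms defining $[a_\lambda b]_N$: $[N(a)_\lambda b]=-[b_{-\lambda-\partial}N(a)]$, $[a_\lambda N(b)]=-[N(b)_{-\lambda-\partial}a]$, and $N[a_\lambda b]=-N[b_{-\lambda-\partial}a]$. Recombining (and using that $N$ commutes with $\partial$ so that the variable $-\lambda-\partial$ is preserved under $N$) yields $[a_\lambda b]_N=-[b_{-\lambda-\partial}a]_N$.

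The bulk of the work is the Jacobi identity. My plan is to expand both sides of
\begin{equation*}
[a_\lambda[b_\mu c]_N]_N=[[a_\lambda b]_N{}_{\lambda+\mu}c]_N+[b_\mu[a_\lambda c]_N]_N
\end{equation*}
by applying the definition of $[\cdot_\lambda\cdot]_N$ twice. At the two ``outer'' applications one encounters the composite $N\circ[\cdot_\nu\cdot]_N$, which by the homomorphism property already established rewrites as $[N(-)_\nu N(-)]$ in the original bracket; this is the crucial simplification that removes the iterated $N^2$ terms. After this substitution, every remaining expression is a bracket or iterated bracket of elements $a, b, c, N(a), N(b), N(c)$ in the original Lie conformal algebra, with coefficients $\pm 1$ and with possible outer $N$'s. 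I would then organize the terms into three groups, each of which is an instance of the original Jacobi identity applied to a triple obtained by replacing some of $a,b,c$ by their $N$-images, plus a residual group that vanishes by the Nijenhuis condition applied to pairs.

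The main obstacle is the sheer combinatorial bookkeeping: a naive expansion produces a large number of terms and one has to track the $\lambda$, $\mu$, $-\lambda-\partial$, and $-\lambda-\mu-\partial$ arguments carefully under the substitutions. A more conceptual alternative, which fits the framework developed earlier in the paper, is to reformulate the claim as $[\mu_N,\mu_N]_{\NR}=0$ for the $2$-cochain $\mu_N\in C^2(\A,\A)$ corresponding to $[\cdot_\lambda\cdot]_N$, and to derive this from $[\mu,\mu]_{\NR}=0$ together with the Nijenhuis condition regarded as an identity in $(C^*(\A,\A),[-,-]_{\NR})$; this would repackage the verification into one graded-Lie-algebraic identity rather than a many-term bracket expansion.
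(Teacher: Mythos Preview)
The paper does not supply a proof of this lemma; it is quoted as a known fact from \cite{LZ}. Your outline is the standard direct verification and is correct in structure: conformal sesquilinearity, skew-symmetry, and the homomorphism property are immediate exactly as you say, and the Jacobi identity for $[\cdot_\lambda\cdot]_N$ does follow by the scheme you describe (first use the already-established relation $N[a_\lambda b]_N=[N(a)_\lambda N(b)]$ to remove the iterated $N$'s, then regroup the remaining terms into instances of the original Jacobi identity for triples with some entries replaced by their $N$-images, together with residual terms that cancel pairwise by the Nijenhuis condition). The alternative reformulation via $[\mu_N,\mu_N]_{\NR}=0$ that you mention is also valid and is the more conceptual route, but neither approach appears in the present paper.
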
		
					
					\begin{ex}\label{Nijenhuis-Rota}
						{\rm Let $N:\A\rightarrow \A$ be a Nijenhuis operator on a Lie conformal algebra $\A$. By Lemma \ref{lem:Nijenhuis}, $N$ induces a new Lie conformal algebra structure $\A^N$. Then $\A$ becomes an $\A^N$-module by
							\begin{align*}
							\rho(a)_\la x=[N(a)_\la x], ~~ \mbox{for}~ a\in\A^N,~  x\in\A.
							\end{align*}
							With this module, the map $\phi_\la (a,b):=-N[a_\la b]$ is a $2$-cocycle in $C^2(\A^N,\A)$. Then the identity map ${\rm Id}: \A\rightarrow \A^N$ is a $\phi$-twisted relative Rota-Baxter operator.
						}\end{ex}

						\emptycomment{In \cite{Das2}, Das provided a construction of twisted relative Rota-Baxter operators in the context of Lie algebras by using an old one and a suitable $1$-cochain. Motivated by this, we give a similar construction of twisted relative Rota-Baxter operators on Lie conformal algebras.

							\begin{prop} \label{p21} Let $(M;\rho)$ be a module over a Lie conformal algebra $\A$. For any $2$-cocycle $\phi\in C^2(\A,M)$ and $1$-cochain $h\in C^1(\A,M)$, we have an isomorphism of Lie conformal algebras:
								$\A\ltimes_{\phi} M\cong \A\ltimes_{\phi+\mathbf{d} h} M.$
							\end{prop}
							\begin{proof} Define a $\C[\partial]$-module homomorphism $\psi_h:	\A\ltimes_{\phi} M\rightarrow \A\ltimes_{\phi+\mathbf{d} h} M$ by
								\begin{align}\label{4-2}
								\psi_h(a,m)=(a,m-h(a)), \ \ \forall \ \, a\in \A,\ m\in M.
								\end{align}
								For any $a,b\in\A$ and $m,n\in M$, we have
								\begin{align*}
								\psi_h\big([(a,m)_\lambda(b,n)]^\phi\big)&{=}\psi_h\big([a_\lambda b], \rho(a)_\lambda n-\rho(b)_{-\lambda-\p} m+\phi_\lambda(a,b)\big)\\
								&{=}\big([a_\lambda b], \rho(a)_\lambda n-\rho(b)_{-\lambda-\p} m+\phi_\lambda(a,b)-h([a_\lambda b])\big)\\
								&{=}\big([a_\lambda b], \rho(a)_\lambda n-\rho(b)_{-\lambda-\p} m+\phi_\lambda(a,b)+(\mathbf{d}h)_\la(a,b)-\rho(a)_\lambda h(b)+\rho(b)_{-\la-\p} h(a)\big)
								\\&{=}[(a,m-h(a))_\la(b,n-h(b))]^{\phi+\mathbf{d}h}\\
								&{=}[\psi_h(a,m)_\la \psi_h(b,n)]^{\phi+\mathbf{d}h}.
								\end{align*}
								The fact that $\psi_h$ is invertible follows by exhibiting the inverse $\psi^{-1}_h(a,m)=(a,m+h(a))$, for all $(a,m)\in \A\oplus M.$
							\end{proof}
							
							Let $T:M\rightarrow\A$ be a $\phi$-twisted relative Rota-Baxter operator and $h\in C^1(\A,M)$ be a $1$-cochain. Consider the  subalgebra ${\rm Gr}(T)=\{(T(m),m)|m\in M\} \subset 	\A\ltimes_{\phi} M$. By Proposition \ref{p21} and \eqref{4-2}, $$\psi_h({\rm Gr}(T))=\{\big(T(m),({\rm Id}- h\circ T)(m)\big)|m\in M\}\subset\A\ltimes_{\phi+\mathbf{d} h} M$$
							is a subalgebra. If the $\C[\p]$-module homomorphism ${\rm Id}- h\circ T:M\rightarrow M$ is invertible, then $\psi_h({\rm Gr}(T))$ is the graph of the $\C[\p]$-module homomorphism $T\circ ({\rm Id}- h\circ T)^{-1}$. Hence, it follows from Proposition \ref{p11} that the map
							$T\circ ({\rm Id}- h\circ T)^{-1}$ is a $({\phi+\mathbf{d} h})$-twisted relative Rota-Baxter operator.
							
							Now we take $h\in C^1(\A,M)$ to be a $1$-cocycle. Since $\mathbf{d} h=0$, $\psi_h$ is an automorphism of $\A\ltimes_{\phi} M$ by the proof of Proposition \ref{p21}. We denote by $\tau_h$ the inverse of $\psi_h$. Hence 
							$$\tau_h({\rm Gr}(T))=\{\big(T(m),({\rm Id}+hT)(m)\big)|m\in M\}\subset\A\ltimes_{\phi} M$$
							is still a subalgebra. If, in addition, the $\C[\p]$-module homomorphism ${\rm Id}+h\circ T:M\rightarrow M$ is invertible, then $\tau_h({\rm Gr}(T))$ is the graph of the $\C[\p]$-module homomorphism $T\circ ({\rm Id}+ h\circ T)^{-1}:M\rightarrow \A$. In this case, we say that $h$ is a {\bf $T$-admissible $1$-cocycle}. Then it follows from Proposition \ref{p11} that the map $T\circ ({\rm Id}+ h\circ T)^{-1}$ is a $\phi$-twisted relative Rota-Baxter operator, denoted by $T_h$. Accordingly, there are two Lie conformal algebra structures on $M$, $M^{T,\phi}$ and $M^{T_h,\phi}$, induced by $T$ and $T_h$ respectively. Considering the  $\C[\p]$-module isomorphism ${\rm Id}+h\circ T:M\rightarrow M$, we have
							\begin{align*}
							&	[({\rm Id}+h\circ T)(u)_\la ({\rm Id}+h\circ T)(v)]^{T_h,\phi}\\
							{=}&\rho(T(u))_\la ({\rm Id}+h\circ T)(v)-\rho(T(v))_{-\la-\p} ({\rm Id}+h\circ T)(u)+\phi_\la(T(u),T(v))\\=&\rho(T(u))_\la v-\rho(T(v))_{-\la-\p}u+\phi_\la(T(u),T(v))+\rho(T(u))_\la hT(v)-\rho(T(v))_{-\la-\p} hT(u)\\
							=&[u_\la v]^{T,\phi}+h[T(u)_\la T(v)]=[u_\la v]^{T,\phi}+h\circ T[u_\la v]^{T,\phi}= ({\rm Id}+h\circ T)([u_\la v]^{T,\phi}),
							\end{align*}
							for $u,v\in M$. Hence we have proved the following proposition.
							\begin{prop}
								Let $T:M\rightarrow\A$ be a $\phi$-twisted relative Rota-Baxter operator and $h\in C^1(\A,M)$ a $T$-admissible $1$-cocycle. Then the Lie conformal algebra structures on $M$ induced by $T$ and $T_h$ are isomorphic.
							\end{prop}}

							\begin{defi} {Let $\A$ be a Lie conformal algebra. A $\C[\partial]$-module homomorphism $R:\A\rightarrow \A$ is called a {\bf Reynolds operator} on $\A$ if it satisfies
									\begin{eqnarray}\label{R}
									[R(a)_\lambda R(b)]=R\big([R(a)_\lambda b]+[a_\lambda R(b)]-[R(a)_\lambda R(b)]\big), ~~\mbox{for}~a,b\in \A.
									\end{eqnarray}
								}\end{defi}
								
								Notice that the last term  $-[R(a)_\lambda R(b)]$ in \eqref{R} is the $\lambda$-bracket on $\A$, which is a $2$-cocycle. Therefore, each Reynolds operator $R$ can be seen as a $\phi$-twisted relative Rota-Baxter operator with $\phi_\la(a,b):=-[a_\la b]$ for all $a,b\in \A$. It follows from \eqref{eq:LCA2} that $R$ gives rise to a new Lie conformal algebra structure, denoted by $\A^R$, on $\A$ given by
								\begin{align*}
								[a_\la b]^R=[R(a)_\lambda b]+[a_\lambda R(b)]-[R(a)_\lambda R(b)], ~~\mbox{for}~a,b\in \A.
								\end{align*}
								By \eqref{R}, $R$ is a Lie conformal algebra homomorphism from $\A^R$ to $\A$. If $R$ is invertible, it follows from \eqref{R} that
								\begin{eqnarray*}\label{R-1}
								R^{-1}([a_\lambda b])=[R^{-1}(a)_\lambda b]+[a_\lambda R^{-1}(b)]-[a_\lambda b], ~~\mbox{for}~ a,b\in\A.
								\end{eqnarray*}
								So $(R^{-1}-{\rm Id})([a_\lambda b])=[(R^{-1}-{\rm Id})(a)_\lambda b]+[a_\lambda (R^{-1}-{\rm Id})(b)]$. Namely, $R^{-1}-{\rm Id}:\A\rightarrow \A$ is a {1-cocycle}. Conversely, if $d:\A\rightarrow\A$ is a {1-cocycle} such that ${\rm Id}+d$ is invertible, then $({\rm Id}+d)^{-1}$ is a Reynolds operator on $\A$. Even if ${\rm Id}+d$ is not invertible but $({\rm Id}+d)^{-1}=\sum_{n=0}^\infty (-1)^nd^n$ converges pointwise, $({\rm Id}+d)^{-1}$ is again a Reynolds operator on $\A$. A more precise statement is given below by a verbatim repetition of the proof of \cite[Proposition 2.8]{ZGG} in terms of $\la$-bracket.
								
								\begin{prop}
									Let $\A$ be a Lie conformal algebra with a $1$-cocycle $d\in C^1(\A,\A)$. If the series $\sum_{n=0}^\infty (-1)^nd^n (x)$ is convergent for all $x\in\A$, then $\sum_{n=0}^\infty (-1)^nd^n$ is a Reynolds operator on $\A$. This is the case when $d$ is locally nilpotent.
								\end{prop}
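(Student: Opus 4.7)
The plan is to verify the Reynolds identity \eqref{R} for $R := \sum_{n=0}^{\infty}(-1)^n d^n$ by a direct computation based on two facts: $d$ is a derivation, and $R$ satisfies the telescoping identity $dR = R d = \Id - R$.

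First, I would observe that saying $d \in C^1(\A,\A)$ is a $1$-cocycle with respect to the adjoint module $(\A;\ad)$ means exactly that
$$
d([a_\lambda b]) = [d(a)_\lambda b] + [a_\lambda d(b)], \quad \forall~ a,b \in \A,
$$
that is, $d$ is a conformal derivation. This is read off from \eqref{eq:coboundary LCA} specialized to $k=1$ with $\rho = \ad$. Next, under the convergence assumption, $R$ is a well-defined $\C[\partial]$-module endomorphism (each $d^n$ is $\C[\partial]$-linear, and linearity passes to the limit pointwise), and the partial-sums argument gives
$$
d(R(x)) = \sum_{n=0}^{\infty}(-1)^n d^{n+1}(x) = -\sum_{n=1}^{\infty}(-1)^n d^n(x) = x - R(x), \quad \forall~ x \in \A,
$$
i.e. $dR = \Id - R$ and likewise $Rd = \Id - R$.

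Then I would verify \eqref{R} in the form
$$
R\bigl([R(a)_\lambda b] + [a_\lambda R(b)] - [R(a)_\lambda R(b)]\bigr) = [R(a)_\lambda R(b)].
$$
Rather than invert $R$, I apply $\Id + d$ to both sides, noting that $(\Id + d) \circ R = R + dR = \Id$, which turns the equation into the equivalent statement
$$
[R(a)_\lambda b] + [a_\lambda R(b)] - [R(a)_\lambda R(b)] = [R(a)_\lambda R(b)] + d\bigl([R(a)_\lambda R(b)]\bigr).
$$
Using the derivation property to expand $d([R(a)_\lambda R(b)]) = [dR(a)_\lambda R(b)] + [R(a)_\lambda dR(b)]$ and the telescoping identity $dR(a) = a - R(a)$, $dR(b) = b - R(b)$, the right-hand side becomes
$$
[R(a)_\lambda R(b)] + [(a - R(a))_\lambda R(b)] + [R(a)_\lambda (b - R(b))],
$$
which after cancelling the three copies of $[R(a)_\lambda R(b)]$ collapses to the left-hand side.

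The argument is essentially formal, so there is no real obstacle; the only subtle point is that $\Id + d$ need not be honestly invertible, so one cannot literally write $R = (\Id + d)^{-1}$. I would therefore present the equivalence of the Reynolds identity with its image under $\Id + d$ as a genuine implication (the Reynolds identity implies the expanded one by applying $\Id + d$, and the converse follows by applying $R$ to both sides of the expanded identity and using $R(\Id + d) = \Id$ pointwise). The last sentence of the proposition is immediate: if $d$ is locally nilpotent then for each $x \in \A$ the sum $\sum_{n=0}^\infty (-1)^n d^n(x)$ is a finite sum, hence trivially convergent, so the hypothesis of the proposition is met.
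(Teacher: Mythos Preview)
Your argument is correct and is the standard one. The paper itself does not actually write out a proof: it states the proposition and refers the reader to \cite[Proposition~2.8]{ZGG}, remarking that the argument carries over verbatim to the $\lambda$-bracket setting. What you have written is precisely that transplanted proof---identify $d$ as a conformal derivation via the $1$-cocycle condition, establish the telescoping relations $Rd = dR = \Id - R$, and then reduce the Reynolds identity to the derivation identity applied to $[R(a)_\lambda R(b)]$.

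One small technical remark: your identity $dR = \Id - R$ tacitly commutes $d$ through the infinite sum, which strictly speaking uses some continuity of $d$ in whatever sense the convergence hypothesis is meant. This is harmless in the locally nilpotent case (the sum is finite), and in the general case the hypothesis ``$\sum (-1)^n d^n(x)$ converges'' is already informal enough that this passage is standard; but if you want to be fully safe you can run the argument using only $R(\Id + d) = \Id$, which needs no continuity: verify directly that $[R(a)_\lambda b] + [a_\lambda R(b)] - [R(a)_\lambda R(b)] = (\Id + d)\bigl([R(a)_\lambda R(b)]\bigr)$ using $dR(a) = a - R(a)$ in the locally nilpotent case, and then apply $R$ to both sides.
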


								A Reynolds operator on a Lie conformal algebra is said to be  {\bf nontrivial} if it is neither invertible nor equal to $0$. Otherwise, it is called {\bf trivial}.
								\begin{ex}  {\rm The Virasoro Lie conformal algebra ${\rm Vir}=\C[\partial] L$ is a free $\C[\partial]$-module generated by the symbol $L$, satisfying
										$[L_\la L]=(\partial+2\la) L.$ Assume that $R$ is Reynolds operator on ${\rm Vir}$. Write $R(L)=f(\partial)L$ for some $f(\partial)\in \C[\partial]$. Substituting this into the defining relation of Reynolds operator gives
										\begin{align*}					f(-\la)f(\partial+\la)=(f(-\la)+f(\partial+\la)-f(-\la)f(\partial+\la))f(\partial).
										\end{align*}
										Equating terms of highest degree in $\partial$ in both sides of the above equation, we obtain that $f$ is constant. So $R$ is trivial.
									}\end{ex}
\section{NS-Lie conformal algebras}
In this section, we introduce the notion of an NS-Lie conformal algebra, which is a conformal analogue of NS-Lie algebras given in  \cite{Das2}.  We show that NS-Lie conformal algebras connect closely with Lie conformal algebras, conformal NS-algebras, twisted relative Rota-Baxter operators and Nijenhuis operators.
Various examples of NS-Lie conformal algebras are given.

\begin{defi}{\rm \label{defi5} { Let $\mathcal{A}$ be a $\C[\partial]$-module equipped with two binary $\lambda$-multiplications	$\circ_\lambda$ and $\vee_\lambda$. Then $\mathcal{A}$ is called an {\bf  NS-Lie conformal algebra}, if $\circ_\lambda$ and $\vee_\lambda$ are conformal sesquilinear maps, and $\vee_\lambda$ is skew-symmetric, i.e.,
	\begin{align}
		a \vee_\lambda b=-b \vee_{-\lambda-\p} a, \ \forall \ \ a,b\in \mathcal{A},
\end{align}
			and the following axioms hold for all $a,b,c\in\mathcal{A}$:
	\begin{align}
&(a\circ_\la b)\circ_{\la+\mu} c-a\circ_\la(b\circ_\mu c)-(b\circ_\mu a)\circ_{\la+\mu}c+b\circ_\mu(a\circ_\la c)+(a\vee_\la b)\circ_{\la+\mu} c=0,\label{NS1}\\
	& a\vee_\la [b_\mu c]-[a_\la b]\vee_{\la+\mu} c-b\vee_\mu [a_\la c]+a\circ _\la(b\vee_\mu c)-b\circ_\mu (a\vee_\la c)+c\circ_{-\la-\mu-\p}(a\vee_\la b)=0,\label{NS2}
	\end{align}
	where the $\lambda$-bracket $[\cdot_\lambda\cdot]$ is defined by
	\begin{align}\label{NS5-5}
	[a_\lambda b]=a\circ_ \lambda b-b\circ_{-\lambda-\p} a+a\vee_\lambda b, \ \forall \ a,b\in \mathcal{A}.
	\end{align}}}
	\end{defi}
									
	\begin{remark}{  If the binary $\lambda$-multiplication $\circ_\lambda$ in Definition \ref{defi5} is trivial, then $(\mathcal{A},\vee_\la)$ is a Lie conformal algebra.
	If $\vee_\la$ is trivial, then $(\mathcal{A},\circ_\la)$ becomes a left-symmetric conformal algebra. Hence NS-Lie conformal algebras are a generalization of both Lie conformal algebras and left-symmetric conformal algebras. See {\rm \cite{HL}} for details about left-symmetric conformal algebras.
	}\end{remark}

By conformal sesquilinearity, \eqref{NS1} is equivalent to the following identity:
\begin{equation}
\begin{split}
\label{NS1-1} 0=&(a\circ_\la b)\circ_{-\mu-\p} c-a\circ_\la(b\circ_{-\mu-\p}c)-(b\circ_{-\la-\p} a)\circ_{-\mu-\p}c+b\circ_{-\la-\mu-\p}(a\circ_\la c)\\&+(a\vee_\la b)\circ_{-\mu-\p} c.
\end{split}
\end{equation}
\begin{defi}
Let $(\A,\circ_\lambda,\vee_\la)$ and $(\B,\circ'_\lambda,\vee'_\la)$ be two $\NS$-Lie  conformal algebras. A {\bf homomorphism}  from $\A$ to $\B$ is a $\C[\partial]$-module homomorphism $\phi$ satisfying
$$\phi(a\circ_\lambda b)=\phi(a)\circ'_\lambda \phi(b),\quad \phi(a\vee_\lambda b)=\phi(a)\vee'_\lambda \phi(b),\quad\forall~a,b\in\A.$$
\end{defi}


\begin{theo}\label{p5}  Let $(\mathcal{A},\circ_\la, \vee_\la)$ be an $\NS$-Lie conformal algebra. Then $(\mathcal{A}, [\cdot_\lambda\cdot])$ is a Lie conformal algebra, where the $\lambda$-bracket $[\cdot_\lambda\cdot]$ is given by \eqref{NS5-5}, which is called the {\bf sub-adjacent Lie conformal algebra} of  $(\mathcal{A},\circ_\la, \vee_\la)$ and denoted by  $\mathcal{A}_{Lie}$. Furthermore, the $\la$-action of $\mathcal{A}_{Lie}$ on $\A$ defined by
\begin{equation}\label{eq:module}
\rho(a)_\la x= a \circ_\la x, \ \mbox{for}\ a\in \mathcal{A}_{Lie},\ x\in\mathcal{A}
\end{equation}
gives a module of $\mathcal{A}_{Lie}$ on $\A$.
\end{theo}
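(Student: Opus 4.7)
The plan is to verify in turn the three Lie conformal algebra axioms for $[\cdot_\lambda\cdot]$ and then the two defining properties of the proposed representation $\rho(a)_\lambda x := a\circ_\lambda x$. Conformal sesquilinearity of $[\cdot_\lambda\cdot]$ is immediate from the stipulated sesquilinearity of $\circ_\lambda$ and $\vee_\lambda$. Skew-symmetry $[a_\lambda b] = -[b_{-\lambda-\partial}a]$ is obtained by swapping $a\leftrightarrow b$ and $\lambda\mapsto -\lambda-\partial$ in \eqref{NS5-5} and invoking the given identity $a\vee_\lambda b = -b\vee_{-\lambda-\partial}a$.

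I would then establish the module-type identity
\begin{equation*}
[a_\lambda b]\circ_{\lambda+\mu}c \;=\; a\circ_\lambda(b\circ_\mu c)-b\circ_\mu(a\circ_\lambda c),
\end{equation*}
which simultaneously delivers the compatibility condition of Definition \ref{def1} for $\rho$. Expanding the left-hand side via \eqref{NS5-5} gives
$(a\circ_\lambda b)\circ_{\lambda+\mu}c-(b\circ_{-\lambda-\partial}a)\circ_{\lambda+\mu}c+(a\vee_\lambda b)\circ_{\lambda+\mu}c$;
by the sesquilinearity identity \eqref{NS3} the middle summand equals $(b\circ_\mu a)\circ_{\lambda+\mu}c$, and then \eqref{NS1} identifies the full expression with the right-hand side. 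The remaining module axiom $\rho(\partial a)_\lambda = -\lambda\,\rho(a)_\lambda$ is another direct consequence of sesquilinearity of $\circ_\lambda$, so once Jacobi is settled the representation claim will follow at once.

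The Jacobi identity will be proved by direct expansion. Using \eqref{NS5-5} to open each bracket in
$[a_\lambda[b_\mu c]]-[[a_\lambda b]_{\lambda+\mu}c]-[b_\mu[a_\lambda c]]$
produces $27$ elementary monomials, which I would organize by the number of $\vee$-factors: twelve purely $\circ$-terms, twelve one-$\vee$ terms, and three two-$\vee$ terms. The purely $\circ$-part assembles into three instances of \eqref{NS1} applied to the triples $(a,b,c)$, $(b,a,c)$ and $(a,c,b)$, after invoking the sesquilinearity identities \eqref{NS3}--\eqref{NS4-4} to normalize inputs of the form $x\circ_{-\nu-\partial}y$. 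The one-$\vee$ part, similarly normalized, collects into three instances of \eqref{NS2}. The two-$\vee$ contribution $a\vee_\lambda(b\vee_\mu c)-(a\vee_\lambda b)\vee_{\lambda+\mu}c-b\vee_\mu(a\vee_\lambda c)$ is then handled by replacing $a\vee_\lambda b$ and $b\vee_\mu(a\vee_\lambda c)$ with their $\vee$-skew-symmetric rewrites and pairing the results against terms already produced by \eqref{NS2}.

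The principal obstacle is the bookkeeping of this last step: none of the raw monomials match \eqref{NS1} or \eqref{NS2} verbatim, so every application costs one or two uses of conformal sesquilinearity \eqref{NS3}--\eqref{NS4-4} or of the skew-symmetry of $\vee$ to bring the arguments into the correct form. The content of the theorem is precisely that \eqref{NS1}, \eqref{NS2} and the skew-symmetry of $\vee$ are collectively strong enough to force all residues to cancel, after which $(\A,[\cdot_\lambda\cdot])$ is a Lie conformal algebra and $\rho$ is a module structure of $\A_{\mathrm{Lie}}$ on $\A$.
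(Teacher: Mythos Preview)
Your overall strategy---expand the Jacobiator via \eqref{NS5-5} and reduce using \eqref{NS1}, \eqref{NS2}, sesquilinearity and the skew-symmetry of $\vee_\lambda$---is exactly the paper's approach, and your derivation of the module identity $[a_\lambda b]\circ_{\lambda+\mu}c=a\circ_\lambda(b\circ_\mu c)-b\circ_\mu(a\circ_\lambda c)$ matches the paper's treatment of the representation $\rho$.

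The Jacobi bookkeeping, however, is not correct as written. The axiom \eqref{NS1} has five terms, one of which is $(a\vee_\lambda b)\circ_{\lambda+\mu}c$, so the twelve pure-$\circ$ monomials cannot by themselves constitute three instances of \eqref{NS1}; each instance necessarily carries one one-$\vee$ term with it. Your choice of triples $(a,b,c),(b,a,c),(a,c,b)$ is also off: the first two share the same outer argument $c$, and the pure-$\circ$ parts of \eqref{NS1} for $(a,b,c)$ and for $(b,a,c)$ with $\lambda\leftrightarrow\mu$ are negatives of each other, hence redundant---the four pure-$\circ$ monomials with $a$ outermost on the right remain unaccounted for. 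The paper instead groups by \emph{which element sits as the outer right argument of $\circ$}: three packets of the shape $[x_\nu y]\circ_{\cdots}z-x\circ_{\cdots}(y\circ_{\cdots}z)+y\circ_{\cdots}(x\circ_{\cdots}z)$ with $z\in\{c,a,b\}$, each vanishing by \eqref{NS1} (respectively its sesquilinear variant \eqref{NS1-1}). What remains---nine one-$\vee$ terms \emph{together with} all three two-$\vee$ terms---is precisely \emph{one} instance of \eqref{NS2}, since the $[\cdot_\lambda\cdot]$-arguments in \eqref{NS2} already contain the $\vee$ contributions. No three copies of \eqref{NS2} are needed, and there is no separate step for the two-$\vee$ terms.
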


\begin{proof} It is easy to see that the $\lambda$-bracket $[\cdot_\lambda\cdot]$ is conformal sesquilinear and skew-symmetric. To check the Jacobi identity, we compute separately,
\begin{align*}
[a_\la[b_\mu c]]=&[a_\la(b\circ_\mu c-c\circ_{-\mu-\p}b+b\vee_\mu c)]\nonumber\\
=&a\circ_\la (b\circ_\mu c-c\circ_{-\mu-\p}b+b\vee_\mu c)-(b\circ_\mu c-c\circ_{-\mu-\p}b+b\vee_\mu c)\circ_{-\la-\p}a+a\vee_\la (b\ast_\mu c),\nonumber\\
[b_\mu[a_\la c]]=&b\circ_\mu (a\circ_\la c-c\circ_{-\la-\p}a+a\vee_\la c)-(a\circ_\la c-c\circ_{-\la-\p}a+a\vee_\la c)\circ_{-\mu-\p} b+b\vee_\mu(a\ast_\la c),\nonumber\\
[[a_\la b]_{\la+\mu}c]=&[(a\circ_\la b-b\circ_{-\la-\p}a+a\vee_\la b)_{\la+\mu}c]\nonumber\\
=&(a\circ_\la b-b\circ_{-\la-\p}a+a\vee_\la b)\circ _{\la+\mu}c-c\circ _{-\la-\mu-\p}(a\circ_\la b\nonumber\\&-b\circ_{-\la-\p}a+a\vee_\la b)+(a\ast_\la b)\vee_{\la+\mu}c.\nonumber
\end{align*}
Thus, we have
\begin{equation}
\begin{split}
\label{NS5-9}	&[[a_\la b]_{\la+\mu}c]+[b_\mu[a_\la c]]-[a_\la[b_\mu c]]\\=&\big((a\circ_\la b-b\circ_{-\la-\p}a+a\vee_\la b)\circ _{\la+\mu}c-a\circ_\la(b\circ_\mu c)+b\circ_\mu(a\circ_\la c)\big)\\
&+\big((b\circ_\mu c-c\circ_{-\mu-\p}b+b\vee_\mu c)\circ_{-\la-\p}a-b\circ_\mu(c\circ_{-\la-\p}a)+c\circ_{-\la-\mu-\p}(b\circ_{-\la-\p}a)\big)\\
&-\big( (a\circ_\la c-c\circ_{-\la-\p}a+a\vee_\la c)\circ_{-\mu-\p} b -a\circ_\la(c\circ_{-\mu-\p}b)-c\circ_{-\la-\mu-\p}(a\circ_\la b)\big)\\&
+\big((a\ast_\la b)\vee_{\la+\mu}c+b\vee_\mu(a\ast_\la c)-a\vee_\la (b\ast_\mu c)-c\circ_{-\la-\mu-\p}(a\vee_\la b)  \\
& +b\circ_\mu(a\vee_\la c)-a\circ_\la(b\vee_\mu c)\big).
\end{split}
\end{equation}
The first term in the RHS of \eqref{NS5-9} vanishes due to \eqref{NS1} and \eqref{NS3}. The second and third terms in the RHS of \eqref{NS5-9} vanish due to \eqref{NS4-4} and \eqref{NS1-1}. The last term in the RHS of \eqref{NS5-9} vanishes due to \eqref{NS2}. This proves that $(\mathcal{A}, [\cdot_\lambda\cdot])$ is a Lie conformal algebra.

By conformal sesquilinearity of the $\lambda$-operation $\circ_\la$, \eqref{eq:module} satisfies the second condition of \eqref{module}. By using \eqref{NS3} and \eqref{NS5-5}, \eqref{NS1} can be written as
$$[a_\lambda b]\circ _{\la+\mu}c-a\circ_\la(b\circ_\mu c)+b\circ_\mu(a\circ_\la c)=0,$$
which implies that \eqref{eq:module} satisfies the first condition of \eqref{module}. This ends the proof.
\end{proof}

We introduced conformal NS-algebras in \cite{YUAN} by analogy with Leroux's NS-algebras \cite{L}.
\begin{defi}{\rm Let $\mathcal{A}$ be a $\C[\partial]$-module equipped with three binary $\lambda$-multiplications
$\succ_\lambda, \prec_\lambda$ and $\curlyvee_\lambda$. Then $\mathcal{A}$ is called a {\bf conformal NS-algebra}, if $\succ_\lambda, \prec_\lambda$ and $\curlyvee_\lambda$ are conformal sesquilinear maps, and satisfy the following axioms for all $x,y,z\in\mathcal{A}$:
\begin{align}
x\succ_\lambda(y\succ_\mu z)&=(x\times_\lambda y)\succ_{\lambda+\mu}z,\label{NS11}\\
x\prec_\lambda (y\times_\mu z)&=(x\prec_\lambda y)\prec_{\lambda+\mu}z,\label{NS22}\\
x\succ_\lambda (y\prec_\mu z)&=(x\succ_\lambda y)\prec_{\lambda+\mu}z,\label{NS33}\\
x\succ_\lambda (y\curlyvee_\mu z)- (x\times_\lambda y)\curlyvee_{\lambda+\mu}z&=(x\curlyvee_\lambda y)\prec_{\lambda+\mu}z-x\curlyvee_\lambda (y\times_\mu z),\label{NS44}
\end{align}
where $\times_\lambda$ is defined as
\begin{align}\label{NS55}
x\times_\lambda y=x\succ_\lambda y+x\prec_\lambda y+x\curlyvee_\lambda y.
\end{align}
}\end{defi}

\begin{theo}\label{th1} Let $(\A,\succ_\lambda, \prec_\lambda, \curlyvee_\lambda)$ be a conformal NS-algebra. Then $(\A,\circ_\la,\vee_\la)$
forms an NS-Lie conformal algebra, where
\begin{align}\label{NS8}
x\circ_\la y=x\succ_\lambda y-y\prec_{-\lambda-\p}x,\ \ x\vee_\la y=x\curlyvee_{\lambda} y-y\curlyvee_{-\lambda-\p}x, ~~\mbox{for}~x,y\in\A.
\end{align}
\end{theo}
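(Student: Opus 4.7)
The plan is to verify the three defining conditions of an NS-Lie conformal algebra for the pair $(\circ_\lambda, \vee_\lambda)$ defined in \eqref{NS8}. First I would deal with the easy items. Conformal sesquilinearity of $\circ_\lambda$ and $\vee_\lambda$ is an immediate consequence of the conformal sesquilinearity of $\succ_\lambda$, $\prec_\lambda$ and $\curlyvee_\lambda$, together with the rules for swapping the variable when the first argument carries a $\partial$. Skew-symmetry of $\vee_\lambda$, namely $x\vee_\lambda y=-y\vee_{-\lambda-\partial}x$, follows directly by inspection of its definition $x\curlyvee_\lambda y-y\curlyvee_{-\lambda-\partial}x$.

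Next I would record the useful identity that, under \eqref{NS8}, the associated $\lambda$-bracket \eqref{NS5-5} takes the symmetric form
\begin{equation*}
[a_\lambda b]=a\times_\lambda b-b\times_{-\lambda-\partial}a,
\end{equation*}
where $\times_\lambda$ is the total product \eqref{NS55}. This is obtained by summing the three pieces of $\circ_\lambda$ and $\vee_\lambda$. Having this formula in hand is crucial, because it tells me that every expression of the form $[a_\lambda b]\diamond_{\lambda+\mu}c$ (with $\diamond\in\{\succ,\prec,\curlyvee\}$) can be rewritten via the associativity-type axioms \eqref{NS11}--\eqref{NS44} of the conformal NS-algebra.

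Now for the two main identities \eqref{NS1} and \eqref{NS2}. The strategy is a direct expansion. For \eqref{NS1} I would substitute the definition of $\circ_\lambda$ and $\vee_\lambda$ everywhere, producing a sum of monomials in $\succ,\prec,\curlyvee$. The terms involving only $\succ$ and $\prec$ are handled by \eqref{NS11}, \eqref{NS22}, \eqref{NS33}, while the mixed terms involving $\curlyvee$ collapse thanks to \eqref{NS44}. To control sign/variable reversals coming from the $-\lambda-\partial$ arguments I would use \eqref{NS3}--\eqref{NS4-4} systematically, which are exactly tailored for this kind of substitution; this is why \eqref{NS1-1} was recorded earlier as an equivalent form of \eqref{NS1}. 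For \eqref{NS2}, expanding $a\vee_\lambda[b_\mu c]$ and $[a_\lambda b]\vee_{\lambda+\mu}c$ produces six $\curlyvee$-terms of the form $u\curlyvee_{\bullet}(v\times_\bullet w)$ and $(u\times_\bullet v)\curlyvee_\bullet w$; pairing them off according to \eqref{NS44} cancels them against the remaining $\succ$-$\curlyvee$ and $\prec$-$\curlyvee$ cross terms. The residual pure-$\succ$, pure-$\prec$ pieces cancel by \eqref{NS11}--\eqref{NS33} once the variable changes \eqref{NS3}--\eqref{NS4-4} are applied.

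The main obstacle is purely organizational: the expansion of \eqref{NS1} produces a dozen or so terms and \eqref{NS2} even more, each carrying different $\lambda$-arguments. The only real content is bookkeeping—matching each monomial with the correct conformal NS-algebra axiom after the substitution $\lambda\mapsto -\lambda-\partial$. I would therefore carry out the computation by grouping terms according to the triple of operations appearing (e.g. $\succ\succ$, $\prec\prec$, $\succ\prec$, $\succ\curlyvee$, $\curlyvee\prec$, $\curlyvee\times$) so that each group is closed off by exactly one of \eqref{NS11}--\eqref{NS44}. Once all groups vanish, both \eqref{NS1} and \eqref{NS2} are established and $(\A,\circ_\lambda,\vee_\lambda)$ is an NS-Lie conformal algebra.
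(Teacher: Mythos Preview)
Your proposal is correct and follows exactly the approach indicated in the paper: the paper's proof reads in full ``It follows by a direct calculation. We omit the details.'' Your outline—verifying sesquilinearity and skew-symmetry directly, recording $[a_\lambda b]=a\times_\lambda b-b\times_{-\lambda-\partial}a$, and then expanding \eqref{NS1} and \eqref{NS2} into monomials in $\succ,\prec,\curlyvee$ that are closed off group by group via \eqref{NS11}--\eqref{NS44} together with the variable-swap rules \eqref{NS3}--\eqref{NS4-4}—is precisely that omitted direct calculation.
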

\begin{proof}
It follows by a direct calculation. We omit the details.
\end{proof}

The following theorem reveals a close connection between twisted relative Rota-Baxter operators and NS-Lie conformal algebras.

\begin{theo}\label{th7} Assume that $M$ is a module over a Lie conformal algebra $\A$, $\phi$ is a $2$-cocycle in $C^2(\A, M)$, and $T:M\rightarrow \A$ is a
$\phi$-twisted relative Rota-Baxter operator. Then $M$ becomes an NS-Lie conformal algebra under the following $\la$-multiplications:
\begin{eqnarray}\label{6-6}
u\circ_\la v=T(u)_\la v, ~~ u\vee_\la v=\phi_\la(T(u),T(v)), ~~\mbox{for}~u,v\in M.
\end{eqnarray}
\end{theo}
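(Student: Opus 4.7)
The plan is to verify directly the four axioms in the definition of an NS-Lie conformal algebra (Definition \ref{defi5}): conformal sesquilinearity of $\circ_\la$ and $\vee_\la$, skew-symmetry of $\vee_\la$, identity \eqref{NS1}, and identity \eqref{NS2}. The sesquilinearity is routine: since $T$ is a $\C[\partial]$-module homomorphism, $(\partial u)\circ_\la v = \rho(T(\partial u))_\la v = \rho(\partial T(u))_\la v = -\la(u\circ_\la v)$, and similarly for the second slot and for $\vee_\la$ (using that $\phi\in C^2(\A,M)$ is a conformal 2-cochain). Skew-symmetry of $\vee_\la$ follows at once from the skew-symmetry property \eqref{eq:coboundary3} of $\phi$: $u\vee_\la v=\phi_\la(T(u),T(v))=-\phi_{-\la-\partial}(T(v),T(u))=-v\vee_{-\la-\partial}u$.

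For \eqref{NS1}, I first rewrite every term via the definitions in \eqref{6-6}, so that the LHS becomes
\[
\rho\bigl(T(\rho(T(u))_\la v)\bigr)_{\la+\mu}w-\rho(T(u))_\la\rho(T(v))_\mu w-\rho\bigl(T(\rho(T(v))_\mu u)\bigr)_{\la+\mu}w+\rho(T(v))_\mu\rho(T(u))_\la w+\rho\bigl(T(\phi_\la(T(u),T(v)))\bigr)_{\la+\mu}w.
\]
The strategy is to apply $\rho(\cdot)_{\la+\mu}w$ to both sides of the twisted Rota-Baxter relation \eqref{eq:twisted relative RB conditon} written for $u,v$. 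On the left, the module property \eqref{module} gives $\rho([T(u)_\la T(v)])_{\la+\mu}w=\rho(T(u))_\la\rho(T(v))_\mu w-\rho(T(v))_\mu\rho(T(u))_\la w$. On the right, the only subtle term is $\rho\bigl(T(\rho(T(v))_{-\la-\partial}u)\bigr)_{\la+\mu}w$: by $\C[\partial]$-linearity of $T$ and conformal sesquilinearity of the outer $\rho$ in its first slot, substituting $\partial\mapsto -(\la+\mu)$ into $-\la-\partial$ converts it to $\mu$, so this term equals $\rho\bigl(T(\rho(T(v))_\mu u)\bigr)_{\la+\mu}w$. Rearranging gives exactly \eqref{NS1}.

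For \eqref{NS2}, the key observation is that its LHS coincides with $(\mathbf{d}\phi)_{\la,\mu}(T(u),T(v),T(w))$, which vanishes because $\phi$ is a 2-cocycle. Indeed, rewriting each term via \eqref{6-6} and using $T([u_\la v]^{T,\phi})=[T(u)_\la T(v)]$ (the twisted Rota-Baxter condition, or equivalently Corollary \ref{cor:twist-LCA2}(i)), one obtains
\[
\phi_\la(T(u),[T(v)_\mu T(w)])-\phi_{\la+\mu}([T(u)_\la T(v)],T(w))-\phi_\mu(T(v),[T(u)_\la T(w)])+\rho(T(u))_\la\phi_\mu(T(v),T(w))-\rho(T(v))_\mu\phi_\la(T(u),T(w))+\rho(T(w))_{-\la-\mu-\partial}\phi_\la(T(u),T(v)),
\]
which is precisely the expansion of $(\mathbf{d}\phi)_{\la,\mu}(T(u),T(v),T(w))$ read off from Proposition \ref{pro:CE-operator} with $k=2$.

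The main obstacle will be the careful bookkeeping of conformal variables, in particular the identity $\rho(T(\rho(T(v))_{-\la-\partial}u))_{\la+\mu}w=\rho(T(\rho(T(v))_\mu u))_{\la+\mu}w$ needed in \eqref{NS1}; once this sesquilinearity manipulation is settled, both identities reduce cleanly to the two defining properties at hand, namely the twisted Rota-Baxter relation for $T$ and the 2-cocycle relation $\mathbf{d}\phi=0$.
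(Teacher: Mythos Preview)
Your proposal is correct and follows essentially the same approach as the paper's own proof: both verify \eqref{NS1} by applying $\rho(\cdot)_{\la+\mu}w$ to the twisted Rota-Baxter identity \eqref{eq:twisted relative RB conditon} together with the module axiom \eqref{module} and the sesquilinearity trick \eqref{NS3}, and both verify \eqref{NS2} by recognizing it as the $2$-cocycle condition $\mathbf{d}\phi=0$ evaluated at $(T(u),T(v),T(w))$ after using $T([u_\la v]^{T,\phi})=[T(u)_\la T(v)]$. The only difference is cosmetic ordering: the paper computes the first four terms of \eqref{NS1} and shows they collapse to $-(u\vee_\la v)\circ_{\la+\mu}w$, whereas you write out all five terms and match them against $\rho(\cdot)_{\la+\mu}w$ applied to the Rota-Baxter relation directly.
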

\begin{proof} It is easy to check that $\circ_\lambda$ and $\vee_\lambda$ are conformal sesquilinear, and $\vee_\lambda$ is skew-symmetric. For $u,v,w\in M$, we have
\begin{align*}
(u\circ_\la v)&\circ_{\la+\mu} w-u\circ_\la(v\circ_\mu w)-(v\circ_\mu u)\circ_{\la+\mu}w+v\circ_\mu(u\circ_\la w)\\
&\stackrel{\eqref{6-6}}{=} T(T(u)_\la v)_{\la+\mu}w-T(u)_\la(T(v)_\mu w)-T(T(v)_\mu u)_{\la+\mu}w+T(v)_\mu(T(u)_\la w)\\
&\stackrel{\eqref{module}}{=} T(T(u)_\la v)_{\la+\mu}w-T(T(v)_\mu u)_{\la+\mu}w-[T(u)_\la T(v)]_{\la+\mu}w\\
&\stackrel{\eqref{NS3}}{=} T(T(u)_\la v)-T(v)_{-\la-\p} u))_{\la+\mu}w-[T(u)_\la T(v)]_{\la+\mu}w\\
&\stackrel{\eqref{eq:LCA2}}{=} -T\phi_\lambda(T(u), T(v))_{\la+\mu}w\stackrel{\eqref{6-6}}{=}-(u\vee_\la v)\circ_{\la+\mu}w.
\end{align*}
Hence \eqref{NS1} is valid. To show \eqref{NS2}, we start with the cocycle condition of $\phi$:
\begin{align*}
0=&T(u)_\lambda \phi_\mu (T(v),T(w))-T(v)_\mu \phi_\lambda (T(u),T(w)) +T(w)_{-\la-\mu-\partial}\phi_{\lambda}(T(u),T(v))\\&+\phi_{\lambda}(T(u), [T(v)_\mu T(w)])-\phi_{\mu}(T(v), [T(u)_\lambda T(w)])-\phi_{\la+\mu}([T(u)_{\lambda}T(v)],T(w)).
\end{align*}
This, together with \eqref{6-6}, gives
\begin{equation}
\begin{split}\label{7-7}
0=&u\circ_\lambda (v\vee_\mu w)-v\circ_\mu (u\vee_\la w)+w\circ_{-\la-\mu-\partial}(u\vee_\la v)\\
&+\phi_{\lambda}(T(u), [T(v)_\mu T(w)])-\phi_{\mu}(T(v), [T(u)_\lambda T(w)])-\phi_{\la+\mu}([T(u)_{\lambda}T(v)],T(w)).
\end{split}
\end{equation}
On the other hand, we have
\begin{align*}
[T(u)_\lambda T(v)]&=T\big( T(u)_\la v-T(v)_{-\la-\p}u +\phi_\la(T(u),T(v)) \big)\\&=T(u\circ_\la v-v\circ_{-\la-\p} u+u\vee_\la v)=T(u\ast_\la v).
\end{align*}
It follows that
\begin{align*}
\phi_{\lambda}(T(u), [T(v)_\mu T(w)])&=\phi_{\lambda}(T(u), T(v\ast_\mu w))=u\vee_{\la}(v\ast_\mu w),\\
\phi_{\mu}(T(v), [T(u)_\lambda T(w)])&=\phi_{\mu}(T(v), T(u\ast_\la w))=v\vee_\mu(u\ast_\la w),\\				\phi_{\la+\mu}([T(u)_{\lambda}T(v)],T(w))&=\phi_{\la+\mu}(T(u\ast_{\lambda}v),T(w))=(u\ast_{\lambda}v)\vee_{\la+\mu}w.
\end{align*}
Plugging this back into \eqref{7-7}, we obtain \eqref{NS2}.
\end{proof}

It follows immediately from  Theorems \ref{p5} and \ref{th7} that if $T:M\rightarrow \A$ is a $\phi$-twisted relative Rota-Baxter operator, then $T$ equips $M$ with a structure of Lie conformal algebra, which is exactly the one defined by \eqref{eq:LCA2}.

Let $\A$ be a Lie conformal algebra with a module $(M;\rho)$, and $\A'$ a Lie conformal algebra with a module $(M';\rho')$. Suppose that $T: M \rightarrow\A$ is a $\phi$-twisted relative Rota-Baxter operator and $T^\prime:M'\rightarrow\A'$ is a $\phi^{\prime}$-twisted relative Rota-Baxter operator, where $\phi$ and $\phi^{\prime}$ are $2$-cocycles in $C^2(\A,M)$ and $C^2(\A',M')$, respectively.
\begin{defi}{ A {\bf morphism} of twisted relative Rota-Baxter operators from $T$ to $T^\prime$ consists of a pair $(\chi, \psi)$ of a Lie conformal algebra homomorphism $\chi:\A\rightarrow \A'$ and a $\C[\partial]$-module homomorphism $\psi:M\rightarrow M'$ satisfying
\begin{align}\label{morphism}
\chi\circ T=T^\prime\circ \psi,\ \
\rho'(\chi(a))_\la \psi(m)=\psi(\rho(a)_\la m),\ \
\psi\phi_\la(a,b)=\phi^\prime_\la(\chi(a),\chi(b)),
\end{align}
for all $a,b\in\A$ and $m\in M$. It is called an  {\bf isomorphism} if $\phi$ and $\psi$ are both linear isomorphisms.
}\end{defi}

\begin{prop}\label{p9} With notations above. If $(\chi, \psi)$ is a morphism from $T$ to $T^\prime$, then $\psi:M\rightarrow M'$ is an $\NS$-Lie conformal algebra homomorphism from $(M,\circ_\la,\vee_\lambda)$ to $(M',\circ'_\la,\vee'_\lambda)$, where $\circ_\la,~ \vee_\lambda, ~\circ'_\la$ and $\vee'_\lambda$ are given by
\begin{align*}
u\circ_\la v=T(u)_\la v,& \quad u\vee_\la v=\phi_\la(T(u),T(v)),~~\mbox{for}~u,v\in M;\\
u'\circ'_\la v'=T'(u')_\la v',& \quad u'\vee'_\la v'=\phi_\la(T'(u'),T'(v')), ~~\mbox{for}~u',v'\in M'.
\end{align*}
\end{prop}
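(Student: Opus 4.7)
The plan is to verify directly that $\psi$ intertwines the two $\lambda$-multiplications $\circ_\lambda,\vee_\lambda$ on $M$ with the corresponding $\circ'_\lambda,\vee'_\lambda$ on $M'$. Since both operations are built out of $T$ (resp.\ $T'$) together with the module action $\rho$ (resp.\ $\rho'$) and the $2$-cocycle $\phi$ (resp.\ $\phi'$), the three compatibility conditions in \eqref{morphism} are precisely what is needed.

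First I would check $\psi(u\circ_\lambda v)=\psi(u)\circ'_\lambda\psi(v)$. Unfolding the definition, $u\circ_\lambda v=\rho(T(u))_\lambda v$, so applying the second identity of \eqref{morphism} with $a=T(u)$, $m=v$ gives $\psi(\rho(T(u))_\lambda v)=\rho'(\chi(T(u)))_\lambda\psi(v)$. Then the first identity $\chi\circ T=T'\circ\psi$ converts $\chi(T(u))$ into $T'(\psi(u))$, yielding $\rho'(T'(\psi(u)))_\lambda\psi(v)=\psi(u)\circ'_\lambda\psi(v)$, as required.

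Next I would verify $\psi(u\vee_\lambda v)=\psi(u)\vee'_\lambda\psi(v)$. By definition $u\vee_\lambda v=\phi_\lambda(T(u),T(v))$, and the third identity of \eqref{morphism} with $a=T(u)$, $b=T(v)$ gives
\begin{equation*}
\psi(\phi_\lambda(T(u),T(v)))=\phi'_\lambda(\chi(T(u)),\chi(T(v))).
\end{equation*}
Applying $\chi\circ T=T'\circ\psi$ to each argument on the right turns this into $\phi'_\lambda(T'(\psi(u)),T'(\psi(v)))=\psi(u)\vee'_\lambda\psi(v)$. The $\C[\partial]$-linearity of $\psi$ is already part of the assumption, so conformal sesquilinearity is automatically respected, and no further checks are needed.

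There is no real obstacle here: the proposition is essentially a direct unpacking of definitions, and the three compatibility conditions in \eqref{morphism} were manifestly designed so that $\psi$ transports the NS-Lie operations built by Theorem \ref{th7}. The only mildly subtle point is remembering that $T(u)_\lambda v$ in \eqref{6-6} stands for $\rho(T(u))_\lambda v$, which is exactly the setting in which the second condition of \eqref{morphism} applies.
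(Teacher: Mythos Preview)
Your proof is correct and follows essentially the same approach as the paper: both verify the two homomorphism identities directly by unfolding the definitions of $\circ_\lambda$ and $\vee_\lambda$ from \eqref{6-6} and applying the three compatibility conditions in \eqref{morphism} in the same order. The only difference is cosmetic---you spell out the module action as $\rho(T(u))_\lambda v$ whereas the paper uses the shorthand $T(u)_\lambda v$---but the logic is identical.
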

\begin{proof}
By \eqref{6-6} and \eqref{morphism}, we have
\begin{align*}
\psi(u\circ_\la v)&=\psi(T(u)_\la v)=(\chi\circ T)(u)_\la\psi(v){=}(T^\prime\circ\psi)(u)_\la\psi(v)=\psi(u)\circ'_\la\psi(v),\\
\psi(u\vee_\la v)&=\psi\phi_\la(T(u),T(v)){=}\phi^\prime_\la(\chi\circ T(u),\chi\circ T(v)){=}\phi^\prime_\la(T^\prime\circ \psi(u),T^\prime\circ \psi(v))=\psi(u)\vee'_\la\psi(v),
\end{align*}
for any $u, v\in M$. Then the result follows from Theorem \ref{th7}.
\end{proof}

\begin{prop}
Let $(\mathcal{A},\circ_\la, \vee_\la)$ be an $\NS$-Lie conformal algebra. Define
\begin{align*}
\phi_\la(a,b)=a\vee_\la b, \ \forall \ a,b\in \mathcal{A}.
\end{align*}
Then $\phi$ is a $2$-cocycle of the Lie conformal algebra $\mathcal{A}_{Lie}$ with coefficients in the module $\A$ given by \eqref{eq:module}. Furthermore, the identity map ${\rm Id}: \A\rightarrow \A_{Lie}$ is a $\phi$-twisted relative Rota-Baxter operator.
\end{prop}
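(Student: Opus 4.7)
The plan is to reduce both assertions to direct translations of the NS-Lie conformal algebra axioms. For the first claim, that $\phi_\la(a,b)=a\vee_\la b$ is a 2-cocycle of $\A_{Lie}$ with coefficients in the module $(\A;\rho)$ of Theorem~\ref{p5}, I would apply the explicit coboundary formula from Proposition~\ref{pro:CE-operator} with $k=2$, using $\rho(a)_\la x=a\circ_\la x$, $\phi_\nu(x,y)=x\vee_\nu y$, and the $\la$-bracket of $\A_{Lie}$ given by \eqref{NS5-5}. The six terms of $(\mathbf{d}\phi)_{\la,\mu}(a,b,c)$ split naturally: the three ``module-action'' terms yield $a\circ_\la(b\vee_\mu c)$, $-b\circ_\mu(a\vee_\la c)$ and $c\circ_{-\la-\mu-\p}(a\vee_\la b)$, while the three ``bracket-in-$\phi$'' terms, once rewritten via the skew-symmetry of $\vee_\la$ to place the commutator in the natural slot of $\phi$, yield $a\vee_\la[b_\mu c]$, $-b\vee_\mu[a_\la c]$ and $-[a_\la b]\vee_{\la+\mu}c$. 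The sum of these six expressions is exactly the left-hand side of the identity \eqref{NS2}, which vanishes by assumption; so $\phi$ is a 2-cocycle.

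For the second claim, the plan is simply to substitute $T=\mathrm{Id}$ into the defining equation \eqref{eq:twisted relative RB conditon} of a $\phi$-twisted relative Rota-Baxter operator. The left-hand side becomes the $\la$-bracket of $\A_{Lie}$, while the right-hand side collapses to $a\circ_\la b - b\circ_{-\la-\p}a + a\vee_\la b$. By the very definition \eqref{NS5-5} of the sub-adjacent bracket, these two expressions coincide, so the twisted Rota-Baxter condition holds tautologically.

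The only mildly delicate point is the sign bookkeeping and the $\la_3\mapsto\la_3^\dag$ substitution inside the coboundary formula; one has to use $\la_1+\la_3^\dag=-\la_2-\p$ and $\la_2+\la_3^\dag=-\la_1-\p$ together with the skew-symmetry $\phi_\mu(X,Y)=-\phi_{-\mu-\p}(Y,X)$ to align every term with its counterpart in \eqref{NS2}. Once this is carried out, each term matches on the nose, so there is no real obstacle: the proposition is essentially a cohomological restatement of the NS-Lie axioms, with the identity map providing a canonical twisted relative Rota-Baxter operator that recovers those axioms from \eqref{eq:LCA2}.
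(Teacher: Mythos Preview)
Your proposal is correct and follows essentially the same route as the paper: compute $(\mathbf d\phi)_{\la,\mu}(a,b,c)$ directly from the coboundary formula, identify the resulting six terms with the left-hand side of axiom~\eqref{NS2}, and then observe that the twisted Rota-Baxter condition for $T=\mathrm{Id}$ is precisely the definition~\eqref{NS5-5} of the sub-adjacent bracket. The paper's proof is terser (it just writes out the expression for $\mathbf d\phi$ and cites~\eqref{NS2} and~\eqref{NS5-5}), but your more explicit account of the sign and $\la_3^\dag$ bookkeeping is exactly the ``direct calculation'' the paper omits.
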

\begin{proof}By a direct calculation, we have
\begin{align*}
({\mathbf{d} \phi})_{\lambda,\mu}	(a,b,c)=&a\vee_\la [b_\mu c]-[a_\la b]\vee_{\la+\mu} c-b\vee_\mu [a_\la c]+a\circ _\la(b\vee_\mu c)\\
&-b\circ_\mu (a\vee_\la c)+c\circ_{-\la-\mu-\p}(a\vee_\la b)\stackrel{\eqref{NS2}}{=}0,
\end{align*}
which proves that $\phi$ is a $2$-cocycle.  The rest follows from \eqref{NS5-5}.
\end{proof}


\begin{prop}\label{p6}  Let $(\A,[\cdot_\la\cdot])$ be a Lie conformal algebra with a Nijenhuis operator $N$.
Define 
\begin{align*}
a\circ_\lambda b=[N(a)_\lambda b],~~ a\vee_\lambda b=-N[a_\lambda b], \ \forall \ a,b\in\A.
\end{align*}
Then $(\A,\circ_\lambda, \vee_\lambda)$ is an $\NS$-Lie conformal algebra.
\end{prop}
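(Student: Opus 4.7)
The plan is to deduce Proposition \ref{p6} as an immediate consequence of Theorem \ref{th7} together with Example \ref{Nijenhuis-Rota}, rather than verifying the NS-Lie conformal algebra axioms \eqref{NS1} and \eqref{NS2} by hand. Recall from Lemma \ref{lem:Nijenhuis} that a Nijenhuis operator $N$ on $(\A,[\cdot_\la \cdot])$ produces a new Lie conformal algebra $\A^N=(\A,[\cdot_\la \cdot]_N)$, and from Example \ref{Nijenhuis-Rota} that $\A$ itself becomes an $\A^N$-module via $\rho(a)_\la x=[N(a)_\la x]$, that $\phi_\la(a,b):=-N[a_\la b]$ defines a $2$-cocycle in $C^2(\A^N,\A)$, and that the identity map $\Id:\A\to\A^N$ is a $\phi$-twisted relative Rota-Baxter operator.

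With this data in hand, I would apply Theorem \ref{th7} by taking the theorem's ambient Lie conformal algebra to be $\A^N$, its module $M$ to be $\A$ (with action $\rho$ above), its $2$-cocycle to be $\phi$, and its twisted relative Rota-Baxter operator to be $T=\Id$. Theorem \ref{th7} then endows $M=\A$ with an NS-Lie conformal algebra structure given by
\begin{align*}
u\circ_\la v &= \rho(T(u))_\la v = [N(u)_\la v],\\
u\vee_\la v &= \phi_\la(T(u),T(v)) = -N[u_\la v],
\end{align*}
which coincides exactly with the operations defined in the statement of Proposition \ref{p6}, completing the proof.

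The only step requiring attention is confirming that the notation $T(u)_\la v$ appearing in the conclusion of Theorem \ref{th7} refers to the module action $\rho(T(u))_\la v$; this is transparent from the proof of that theorem, where expressions like $T(T(u)_\la v)$ only make sense if $T(u)_\la v$ is interpreted as an element of $M$. I do not anticipate any main obstacle: the substantive content—that the Nijenhuis condition \eqref{N} is equivalent to $\Id$ being a $\phi$-twisted relative Rota-Baxter operator with the data above, and that such an operator induces the claimed NS-Lie conformal algebra structure—is already packaged in Example \ref{Nijenhuis-Rota} and Theorem \ref{th7}. A direct verification of \eqref{NS1} and \eqref{NS2} is available as a fallback: it would reduce \eqref{NS1} to a combination of the Jacobi identity for $[\cdot_\la\cdot]$ applied to $[N(a)_\la[N(b)_\mu c]]-[N(b)_\mu[N(a)_\la c]]$ with the Nijenhuis relation \eqref{N} used to expand $[N(a)_\la N(b)]$, and would treat \eqref{NS2} analogously, but this computation is subsumed by the conceptual argument above.
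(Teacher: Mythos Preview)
Your proposal is correct and matches the paper's own proof essentially line for line: both invoke Example \ref{Nijenhuis-Rota} to view $\Id:\A\to\A^N$ as a $\phi$-twisted relative Rota-Baxter operator (with $\rho(a)_\la x=[N(a)_\la x]$ and $\phi_\la(a,b)=-N[a_\la b]$), and then apply Theorem \ref{th7} to read off $\circ_\la$ and $\vee_\la$. Your remark that $T(u)_\la v$ in Theorem \ref{th7} denotes the module action $\rho(T(u))_\la v$ is the only clarification needed, and it is correct.
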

\begin{proof}
By Example \ref{Nijenhuis-Rota}, the identity map ${\rm Id}: \A\rightarrow \A^N$ is a $\phi$-twisted relative Rota-Baxter operator, where the $\lambda$-action of $\A^N$ on $\A$ and $\phi$ are given by
$$\rho(a)_\la x=[N(a)_\la x],\quad \phi_\la (a,b)=-N[a_\la b],\quad\forall~a,b\in\A^N,x\in\A.$$
By Theorem \ref{th7}, the $\la$-operations $\circ_\lambda$ and $\vee_\lambda$ defined by
\begin{eqnarray*}
a\circ_\lambda b={\rm Id}(a)_\la b=[N(a)_\lambda b],\quad a\vee_\lambda b=-N[{\rm Id}(a)_\la {\rm Id}(b)]=-N[a_\lambda b]
\end{eqnarray*}
make $(\A,\circ_\lambda, \vee_\lambda)$ into an NS-Lie conformal algebra.
\end{proof}

Similar to the properties of Nijenhuis operators on Lie algebras given in \cite{KM}, we also have
\begin{prop}\label{lem:Niejproperty}
Let $(\A,[\cdot_\lambda\cdot])$ be a Lie conformal algebra with a Nijenhuis operator $N$. For all $k,l\in\Nat$, we have
\begin{itemize}
\item[$\rm(i)$]$(\A,[\cdot_\lambda\cdot]_{N^k})$ is a Lie conformal algebra;
\item[$\rm(ii)$]$N^l$ is also a Nijenhuis operator on the Lie conformal algebra $(\A,[\cdot_\lambda\cdot]_{N^k})$;
\item[$\rm(iii)$]The Lie conformal algebras $(\A,([\cdot_\lambda\cdot]_{N^k})_{N^l})$ and $(\A,[\cdot_\lambda\cdot]_{N^{k+l}})$ coincide;
\item[$\rm(iv)$]The Lie conformal algebras $(\A,[\cdot_\lambda\cdot]_{N^k})$ and $(\A,[\cdot_\lambda\cdot]_{N^l})$ are
compatible, that is,
any linear combination of $[\cdot_\lambda\cdot]_{N^k}$ and $[\cdot_\lambda\cdot]_{N^l}$ still
makes $\A$ into a Lie conformal algebra;
\item[$\rm(v)$]$N^l$ is a Lie conformal algebra homomorphism from $(\A,[\cdot_\lambda\cdot]_{N^{k+l}})$ to $(\A,[\cdot_\lambda\cdot]_{N^k})$.
\end{itemize}
\end{prop}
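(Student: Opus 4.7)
The plan is to reduce all five claims to a single fundamental fact, namely that $N^m$ is itself a Nijenhuis operator on $(\A,[\cdot_\la\cdot])$ for every $m\in\Nat$. To prove this, I would induct on $m$: the base cases $m=0,1$ are trivial, and for the inductive step I write
\begin{align*}
[N^{m+1}a_\la N^{m+1}b]=[N(N^ma)_\la N(N^mb)],
\end{align*}
apply the Nijenhuis identity \eqref{N} for $N$, and then expand the resulting terms of the form $[N^{m+1}a_\la Nb]$, $[Na_\la N^{m+1}b]$ and $[N^ma_\la N^mb]$ by repeated applications of \eqref{N} together with the inductive hypothesis for $N^m$; after cancellation everything collapses to $N^{m+1}([N^{m+1}a_\la b]+[a_\la N^{m+1}b]-N^{m+1}[a_\la b])$. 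The bookkeeping is delicate but purely mechanical.

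Once this is in place, claim (i) follows at once from Lemma \ref{lem:Nijenhuis} applied to the Nijenhuis operator $N^k$. For (iii), I would expand both sides of $([a_\la b]_{N^k})_{N^l}=[a_\la b]_{N^{k+l}}$ via the definition of the deformed bracket, producing combinations of terms of the form $N^i[N^j a_\la N^{s-j}b]$ that collapse using the already-established Nijenhuis identities for the various powers of $N$. Then (ii) follows from (i) and (iii): since $(\A,([\cdot_\la\cdot]_{N^k})_{N^l})=(\A,[\cdot_\la\cdot]_{N^{k+l}})$ is a Lie conformal algebra by (i), the Jacobi identity for this deformed bracket, when expanded relative to $(\A,[\cdot_\la\cdot]_{N^k})$ and compared with the known Jacobi identity for $[\cdot_\la\cdot]_{N^k}$, forces the Nijenhuis torsion of $N^l$ to vanish.

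Claim (v) follows by iterating the homomorphism statement of Lemma \ref{lem:Nijenhuis}: combined with (iii), that lemma shows that $N$ is a Lie conformal algebra homomorphism from $(\A,[\cdot_\la\cdot]_{N^m})$ to $(\A,[\cdot_\la\cdot]_{N^{m-1}})$ for each $m\ge 1$, and composing these homomorphisms for $m=k+l,k+l-1,\ldots,k+1$ yields $N^l$ as a homomorphism from $(\A,[\cdot_\la\cdot]_{N^{k+l}})$ to $(\A,[\cdot_\la\cdot]_{N^k})$. Finally, (iv) is the general principle that a Lie conformal algebra bracket is always compatible with its Nijenhuis deformation: assuming without loss of generality $k\le l$, claim (ii) gives that $N^{l-k}$ is Nijenhuis on $(\A,[\cdot_\la\cdot]_{N^k})$ and claim (iii) gives that $[\cdot_\la\cdot]_{N^l}=([\cdot_\la\cdot]_{N^k})_{N^{l-k}}$ is its Nijenhuis deformation. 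In the Nijenhuis-Richardson framework, this compatibility is the vanishing of $[\pi_{N^k},\pi_{N^l}]_\NR$, which follows from the defining Nijenhuis identity together with the graded Jacobi identity for $[-,-]_\NR$ applied to the already-known vanishing of $[\pi_{N^k},\pi_{N^k}]_\NR$ and $[\pi_{N^l},\pi_{N^l}]_\NR$.

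The main obstacle is the inductive step establishing that every $N^m$ is Nijenhuis: this requires a careful tracking of all cross terms produced by iterated applications of the bilinear identity \eqref{N}, and all subsequent claims depend on it. Once this technical lemma is secured, the remaining statements fall out as formal consequences of Lemma \ref{lem:Nijenhuis} and the identity in (iii).
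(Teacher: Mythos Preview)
The paper itself gives no argument here beyond ``straightforward calculation'' and a pointer to the Lie-algebra analogue in \cite{KM}, so there is nothing to compare against except the expected direct computation. Your overall plan---reduce everything to the fact that every power $N^m$ is Nijenhuis on $(\A,[\cdot_\la\cdot])$, and then feed this into Lemma \ref{lem:Nijenhuis}---is exactly the standard route, and your treatments of (i), (iii) and (v) are fine.

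There is, however, a genuine gap in your argument for (ii). You claim that once $([\cdot_\la\cdot]_{N^k})_{N^l}=[\cdot_\la\cdot]_{N^{k+l}}$ is known to satisfy the Jacobi identity, comparing with the Jacobi identity for $[\cdot_\la\cdot]_{N^k}$ ``forces the Nijenhuis torsion of $N^l$ to vanish.'' This implication is false in general: the deformed bracket $\nu_M$ of a Lie bracket $\nu$ by a linear map $M$ can be Lie without $M$ being Nijenhuis. A concrete counterexample already at the level of ordinary Lie algebras is the three-dimensional Heisenberg algebra with basis $e_1,e_2,e_3$, $[e_1,e_2]=e_3$, and $M$ the swap $M(e_1)=e_2$, $M(e_2)=e_1$, $M(e_3)=0$: one checks $\nu_M=0$ (hence trivially Lie) while $T_M(e_1,e_2)=[e_2,e_1]=-e_3\neq0$. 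So the Jacobi identity of the deformed bracket does not recover the Nijenhuis condition. The fix is easy and already implicit in your inductive lemma: once you know all powers of $N$ are Nijenhuis on $(\A,[\cdot_\la\cdot])$, the polarized identity
\[
[N^{p}a_\la N^{q}b]=N^{p}[a_\la N^{q}b]+N^{q}[N^{p}a_\la b]-N^{p+q}[a_\la b]
\]
follows by induction, and a direct expansion of $[N^{l}a_\la N^{l}b]_{N^k}$ using this identity yields $N^{l}[a_\la b]_{N^{k+l}}$, i.e.\ exactly the Nijenhuis condition for $N^l$ on $(\A,[\cdot_\la\cdot]_{N^k})$ once (iii) is in hand.

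A smaller point: your last sentence for (iv) is misleading. The vanishing of $[\pi_{N^k},\pi_{N^l}]_\NR$ does not follow from the separate vanishing of $[\pi_{N^k},\pi_{N^k}]_\NR$ and $[\pi_{N^l},\pi_{N^l}]_\NR$ via graded Jacobi. What does work is precisely the reduction you already set up: using (iii) to write $\pi_{N^l}=[\pi_{N^k},\widehat{N^{l-k}}]_\NR$, and then $[\pi_{N^k},\pi_{N^l}]_\NR=[\pi_{N^k},[\pi_{N^k},\widehat{N^{l-k}}]_\NR]_\NR=\tfrac12[[\pi_{N^k},\pi_{N^k}]_\NR,\widehat{N^{l-k}}]_\NR=0$.
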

\begin{proof}
It follows by a straightforward calculation. We omit the details.
\end{proof}

By Propositions \ref{p6} and \ref{lem:Niejproperty}, we have
\begin{coro}
Let $(\A,[\cdot_\lambda\cdot])$ be a Lie conformal algebra and $N$ a Nijenhuis operator on $\A$. For any $k,l\in\Nat$, $(\A,\circ^{k,l}_\lambda, \vee^{k,l}_\lambda)$ is an $\NS$-Lie conformal algebra, where $\circ^{k,l}_\lambda$ and $\vee^{k,l}_\lambda$ are given by
\begin{align*}
a\circ^{k,l}_\lambda b=[N^k(a)_\lambda b]_{N^l},~~ a\vee^{k,l}_\lambda b=-N^k[a_\lambda b]_{N^l}, ~~\mbox{for}~ a,b\in\A.
\end{align*}
\end{coro}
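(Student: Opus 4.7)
The plan is to combine Propositions~\ref{p6} and \ref{lem:Niejproperty} by a two-step ``relabel and apply'' argument; no direct verification of the NS-Lie axioms \eqref{NS1} and \eqref{NS2} is needed.

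First I would invoke Proposition~\ref{lem:Niejproperty}(i) with the exponent $l$ to obtain a Lie conformal algebra $(\A,[\cdot_\lambda\cdot]_{N^l})$. Next, Proposition~\ref{lem:Niejproperty}(ii) (after swapping the roles of $k$ and $l$ in its statement) tells me that $N^k$ is a Nijenhuis operator on this new Lie conformal algebra $(\A,[\cdot_\lambda\cdot]_{N^l})$. At this point I have the hypotheses of Proposition~\ref{p6} in place: a Lie conformal algebra, together with a Nijenhuis operator on it.

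Finally I apply Proposition~\ref{p6} to the pair $\bigl((\A,[\cdot_\lambda\cdot]_{N^l}),\,N^k\bigr)$. That proposition hands back an NS-Lie conformal algebra structure $(\A,\circ_\lambda,\vee_\lambda)$ whose $\lambda$-products are, by the very formulas displayed there,
\begin{align*}
a\circ_\lambda b &= [N^k(a)_\lambda b]_{N^l}, \\
a\vee_\lambda b &= -N^k[a_\lambda b]_{N^l},
\end{align*}
which match the definitions of $\circ^{k,l}_\lambda$ and $\vee^{k,l}_\lambda$ in the corollary. Thus $(\A,\circ^{k,l}_\lambda,\vee^{k,l}_\lambda)$ is the claimed NS-Lie conformal algebra.

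The only conceptual obstacle is making sure the two cited results are applied in the correct order; once $(\A,[\cdot_\lambda\cdot]_{N^l})$ is recognized as a Lie conformal algebra and $N^k$ as a Nijenhuis operator on it, the corollary is immediate. No additional computation with conformal sesquilinearity, the Jacobi identity, or the relations \eqref{NS1}--\eqref{NS2} is required, since all of that work has already been absorbed into the proofs of the two cited statements.
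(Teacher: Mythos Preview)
Your proposal is correct and follows essentially the same approach as the paper, which simply states ``By Propositions~\ref{p6} and~\ref{lem:Niejproperty}'' without spelling out the details. You have correctly identified that one first uses Proposition~\ref{lem:Niejproperty}(i)--(ii) (with the roles of $k$ and $l$ swapped) to obtain the Lie conformal algebra $(\A,[\cdot_\lambda\cdot]_{N^l})$ together with the Nijenhuis operator $N^k$ on it, and then feeds this pair into Proposition~\ref{p6}.
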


\begin{ex}
Let $\A=\A_1\bowtie\A_2$ be a twilled Lie conformal algebra. If ${\rm p}_1$ and ${\rm p}_2$ are the corresponding projections of $\A$ onto $\A_1$ and $\A_2$, respectively, then any linear combination of ${\rm p}_1$ and ${\rm p}_2$ is a Nijenhuis operator on $\A$. Furthermore, for any $k,l\in\Nat$, $(\A,\circ^{k,l}_\lambda, \vee^{k,l}_\lambda)$ is an $\NS$-Lie conformal algebra, where $\circ^{k,l}_\lambda$ and $\vee^{k,l}_\lambda$ are given by
\begin{align*}
a\circ^{k,l}_\lambda b=k[{\rm p}_1(a)_\lambda b]+l[{\rm p}_2(a)_\lambda b],~~ a\vee^{k,l}_\lambda b=-k{\rm p}_1[a_\lambda b]-l{\rm p}_2[a_\lambda b], ~~\mbox{for}~ a,b\in\A.
\end{align*}
\end{ex}

\begin{ex}
Let $\A=\A_1\bowtie\A_2$ be a twilled Lie conformal algebra.  For any $k_1,k_2\in\C$, define $N:\A\rightarrow \A$ by
$N\mid_{\A_i}=k_i~{\rm id}_{\A_i},~i=1,2.$
Then $N$ is a Nijenhuis operator of $\A$, and $(\A,\circ_\lambda, \vee_\lambda)$ is an $\NS$-Lie conformal algebra, where $\circ_\lambda$ and $\vee_\lambda$ are given by
\begin{align*}
a\circ_\lambda b=k_1[{\rm p}_1(a)_{\lambda} b]+k_2[{\rm p}_2(a)_{\lambda} b],~~ a\vee_\lambda b=-k_1{\rm p}_1[a_\lambda b]-k_2{\rm p}_2[a_\lambda b],~~\mbox{for}~ a,b\in\A.
\end{align*}
\end{ex}

\section{Cohomology and deformations of twisted relative Rota-Baxter operators}
In this section, we give the cohomology of  twisted relative Rota-Baxter operators and use this cohomology to study infinitesimal deformations of twisted relative Rota-Baxter operators.
\subsection{Cohomology of twisted relative Rota-Baxter operators}
Let $T$ be a $\phi$-twisted relative Rota-Baxter operator on a module $(M;\rho)$ over a Lie conformal algebra $\A$. We have shown that $M^{T,\phi}=(M,[\cdot_\lambda\cdot]^{T,\phi})$ is a Lie  conformal algebra, where the $\lambda$-bracket $[\cdot_\lambda\cdot]^{T,\phi}$ is given by \eqref{eq:LCA2}. Furthermore, $\rho^T:M\rightarrow {\rm Cend}(\A)$ defined by \eqref{eq:T-Rep2} gives a module of $M^{T,\phi}$. Therefore, we obtain a cohomology  for the Lie
conformal algebra $M^{T,{\phi}}$ with coefficients in  the module $(\A;\rho^T)$.

More precisely, let $C^{0}(M,\A)=\A/\partial \A$ and for $k\geq1$, let $C^{k}(M,\A)$ be the space of $\C$-linear maps from   $M^{\otimes k}$ to $ \C[\lambda_{1},\cdots,\lambda_{k-1}]\otimes \A$ satisfying \eqref{eq:coboundary1}- \eqref{eq:coboundary3}. Denote by $C^*(M,\A)=\oplus _{k\in\Z_+}C^{k}(M,\A)$. The coboundary operator $\dM_{T}:C^0(M,\A)\rightarrow C^{1}(M,\A)$ is given by
\begin{align}\label{*}
({\mathbf{d}_T \bar{a}})(m)=\rho^T(m)_{-\partial}a=[T(m)_{-\partial} a]+T\big(l_{-\partial}(a, m)-\phi_{-\partial}(T(m),a)\big),
\end{align}
where $\bar{a}\in\A/\partial \A$, $m\in M$, and $l_{\la}(a, m)=\rho(a)_{-\lambda-\partial} m$. For $f\in C^k(M,\A)$ with $k\geq 1$,  $\mathbf{d}_Tf\in C^{k+1}(M,\A)$ is given  by
\begin{eqnarray*}
&&(\dM_T f)_{\la_1,\cdots,\la_k}(m_1,\cdots,m_{k+1})\\
&=&\sum_{i=1}^{k}(-1)^{i+1}[T(m_i)_{\lambda_{i}}f_{\la_1,\cdots,\hat{\la_i},\cdots,\la_k}(m_1,\cdots,\hat{m_i},\cdots,m_{k+1})]\\ &&+\sum_{i=1}^{k}(-1)^{i+1}T\big(\rho(f_{\la_1,\cdots,\hat{\la_i},\cdots,\la_k}(m_1,\cdots,\hat{m_i},\cdots,m_{k+1})_{-\lambda_i-\partial}m_i\big)\\
&&+\sum_{i=1}^{k}(-1)^{i+1}T\phi_{\lambda_i}\big(T(m_i),f_{\la_1,\cdots,\hat{\la_i},\cdots,\la_k}(m_1,\cdots,\hat{m_i},\cdots,m_{k+1})\big)
+\sum_{i,j=1,i<j}^{k}(-1)^{k+i+j+1} \\&& \times f_{{\lambda_1,\cdots,\hat{\la}_i,\cdots,\hat{\la}_j,\cdots,\la_k,\la^\dag_{k+1}}}\big(m_1,\cdots,m_k,(T(m_i)_{\lambda_i}m_j-T(m_j)_{-\lambda_i-\partial}m_i+\phi_{\lambda_i}(T(m_i),T(m_j)))\big)\\
&&+(-1)^{k}[T(m_{k+1})_{\lambda^\dag_{k+1}}f_{\lambda_1,\cdots,\lambda_{k-1}}(m_1,\cdots,m_k)]+(-1)^{k}T\big(\rho(f_{\lambda_1,\cdots,\lambda_{k-1}}(m_1,\cdots,m_k))_{-\lambda^\dag_{k+1}-\partial}m_{k+1}\big)\\
&&+(-1)^{k}T\phi_{\lambda^\dag_{k+1}}\big(T(m_{k+1}),f_{\lambda_1,\cdots,\lambda_{k-1}}(m_1,\cdots,m_k)\big) +\sum_{i=1}^{k}(-1)^{i}\\&&\times f_{\lambda_1,\cdots,\hat{\la_i},\cdots,\la_k}\big(m_1,\cdots,\hat{m}_i,\cdots,m_k,T(m_{i})_{\lambda_{i}}m_{k+1}+ T(m_{k+1})_{-\lambda_{i}-\partial}m_i+\phi_{\lambda_i}(T(m_i),T(m_{k+1})) \big),
\end{eqnarray*}
where $m_1,\cdots,m_{k+1}\in M$ and $\lambda_{k+1}^{\dag}=-\sum_{j=1}^{k}\lambda_{j}-\partial$.

\begin{defi}{
Let $T:M\rightarrow \A$ be a $\phi$-twisted relative Rota-Baxter operator. The cohomology of the cochain complex $(C^*(M,\A),\mathbf{d}_T)$ is called the {\bf cohomology of the $\phi$-twisted relative Rota-Baxter operator}. The corresponding $k$-th cohomology group, which we denote by $H_T(M,\A)$, is called the {\bf $k$-th cohomology group} for the $\phi$-twisted relative Rota-Baxter operator $T$.}
\end{defi}
\begin{remark}
Since a relative Rota-Baxter operator is a $\phi$-twisted relative Rota-Baxter operator with $\phi=0$, the above cohomology of $\phi$-twisted relative Rota-Baxter operators recovers the cohomology of relative Rota-Baxter operators in the case of $\phi=0$.
\end{remark}

Let $T:M\rightarrow \A$ be a $\phi$-twisted relative Rota-Baxter operator. We have shown in Proposition \ref{pro:twisted L_infty} that  $(C^*(M,\A),l_1^T,l_2^T,l_3^T)$ is an $L_{\infty}$-algebra with trivial higher brackets. Therefore, $l_1^T\circ l_1^T=0$. Furthermore, the following result holds.
\begin{prop}
For $f\in C^k(M,\A)$ with $k\geq 1$, we have $\dM_{T} f=(-1)^{k}l_1^T(f).$
\end{prop}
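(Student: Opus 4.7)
The plan is to recognize both $\dM_T f$ and $l_1^T(f)$, up to a uniform overall sign, as the single element $[\hat{\mu}_2^T,\hat{f}]_{\NR}$ of the graded Lie algebra $(C^*(\A\oplus M,\A\oplus M),[-,-]_{\NR})$. By direct inspection, the explicit formula for $\dM_T f$ displayed immediately before the statement is nothing but the Chevalley--Eilenberg coboundary formula \eqref{eq:coboundary LCA} applied to the Lie conformal algebra $M^{T,\phi}=(M,[\cdot_\lambda\cdot]^{T,\phi})$ of \eqref{eq:LCA2} with coefficients in the module $(\A;\rho^T)$ of \eqref{eq:T-Rep2}. Applying Proposition \ref{pro:CE-operator} therefore gives
$$\dM_T f=(-1)^{k-1}[\widehat{\pi^{T,\phi}}+\widehat{\rho^T},\hat{f}]_{\NR},$$
and comparing \eqref{bracket-3}, \eqref{eq:LCA2} and \eqref{eq:T-Rep2} shows that the $2$-cochain $\widehat{\pi^{T,\phi}}+\widehat{\rho^T}$ coincides (up to the cosmetic reordering of the summands $\A$ and $M$, which does not affect the NR bracket) with the bidegree-$(0|1)$ piece $\hat{\mu}_2^T$ of Corollary \ref{cor:twist-LCA2}(ii). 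Substituting the explicit expression $\hat{\mu}_2^T=[\hat{\mu},\hat{T}]_{\NR}+\half[[\hat{\phi},\hat{T}]_{\NR},\hat{T}]_{\NR}$ supplied by that corollary turns this into
$$\dM_T f=(-1)^{k-1}\Bigl([[\hat{\mu},\hat{T}]_{\NR},\hat{f}]_{\NR}+\half[[[\hat{\phi},\hat{T}]_{\NR},\hat{T}]_{\NR},\hat{f}]_{\NR}\Bigr).$$

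On the other side, I would unfold $l_1^T(f)=[\hat{T},\hat{f}]_{\hat{\mu}_1}+\half[\hat{T},\hat{T},\hat{f}]_{\hat{\phi}_1}$ from Proposition \ref{pro:twisted L_infty} through the defining formulas \eqref{eq:shLie2}--\eqref{eq:shLie3} of Theorem \ref{quasi-as-shLie}. Since $T\in C^1(M,\A)$, the sign prefactors $(-1)^{m-1}$ and $(-1)^{n-1}$ appearing in those formulas both equal $+1$, and one obtains exactly the bracket expression inside the parentheses of the previous display. Matching the two expressions then produces the claimed identity $\dM_T f=(-1)^kl_1^T(f)$, once the overall prefactor $(-1)^{k-1}$ inherited from \eqref{eq:CE-operator} has been reconciled with the stated $(-1)^k$ via the graded skew-symmetry of $[-,-]_{\NR}$ together with the suspension used in the proof of Theorem \ref{quasi-as-shLie}.

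The main obstacle will be precisely this sign bookkeeping: one has to track simultaneously the suspension in the proof of Theorem \ref{quasi-as-shLie}, the prefactor $(-1)^{k-1}$ in \eqref{eq:CE-operator}, and the prefactors $(-1)^{m-1},(-1)^{n-1}$ in \eqref{eq:shLie2}--\eqref{eq:shLie3}. A purely mechanical alternative, which avoids the graded-Lie-algebra detour, is to expand each of the two brackets $[[\hat{\mu},\hat{T}]_{\NR},\hat{f}]_{\NR}$ and $[[[\hat{\phi},\hat{T}]_{\NR},\hat{T}]_{\NR},\hat{f}]_{\NR}$ directly from the definition of the NR bracket and match the four blocks of terms --- those involving $[T(m_i)_{\lambda_i}f(\ldots)]$, those involving $T\bigl(\rho(f(\ldots))_\bullet m_i\bigr)$, those involving $T\phi_{\lambda_i}\bigl(T(m_i),f(\ldots)\bigr)$, and those inserting $[m_{i\lambda_i}m_j]^{T,\phi}$ into an argument slot of $f$ --- against the corresponding blocks in the explicit formula for $\dM_T f$; this is routine but tedious.
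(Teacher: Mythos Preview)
Your approach is genuinely different from the paper's: the paper's proof reads, in its entirety, ``It follows by a direct calculation. We omit the details.'' Your conceptual route---identifying both sides as $[\hat{\mu}_2^T,\hat{f}]_{\NR}$ via Proposition~\ref{pro:CE-operator} and Corollary~\ref{cor:twist-LCA2}(ii)---is more illuminating and correctly pinpoints why the two operators should agree. The identification of $\widehat{\pi^{T,\phi}}+\widehat{\rho^T}$ with $\hat{\mu}_2^T$ is exactly right (compare \eqref{bracket-3} with \eqref{eq:LCA2} and \eqref{eq:T-Rep2}), and unfolding $l_1^T$ through \eqref{eq:shLie2}--\eqref{eq:shLie3} with $m=n=1$ does yield $l_1^T(f)=[\hat{\mu}_2^T,\hat{f}]_{\NR}$ on the nose.

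The one genuine gap is the sign reconciliation at the end. Your invocation of ``graded skew-symmetry together with the suspension'' is not an argument: graded skew-symmetry swaps the entries of a bracket, it does not flip the sign of $[\hat{\mu}_2^T,\hat{f}]_{\NR}$ itself, and the suspension in Theorem~\ref{quasi-as-shLie} has already been absorbed into the prefactors $(-1)^{m-1},(-1)^{n-1}$ you used. Carrying your computation through honestly gives $\dM_T f=(-1)^{k-1}[\hat{\mu}_2^T,\hat{f}]_{\NR}=(-1)^{k-1}l_1^T(f)$, not $(-1)^k l_1^T(f)$. This discrepancy is real: either the stated sign in the proposition is off by one, or there is a convention mismatch hiding somewhere in the paper's definitions that neither of us has located. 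Since the paper offers no details, your conceptual argument cannot resolve this; only the ``mechanical alternative'' you sketch---expanding both $[[\hat{\mu},\hat{T}]_{\NR},\hat{f}]_{\NR}$ and $\half[[[\hat{\phi},\hat{T}]_{\NR},\hat{T}]_{\NR},\hat{f}]_{\NR}$ term by term and matching against the displayed formula for $\dM_T f$---will settle the sign definitively. You should carry that out rather than paper over the mismatch.
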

\begin{proof}
It follows by a direct calculation. We omit the details.
\end{proof}

\subsection{Infinitesimal deformations of twisted relative Rota-Baxter operators}
\begin{defi} Let $(M;\rho)$ be a module over a Lie conformal algebra $\A$ and
$T:M\rightarrow\A$ a $\phi$-twisted relative Rota-Baxter operator.  An {\bf infinitesimal deformation} of $T$ is a $t$-parameterized sum $T_t=T+t\T$ for some $\T\in C^{1}(M,\A)$ such that, for $m,n\in \A$, the following equation holds:
\begin{eqnarray*}\label{eq:def RB}
[T_t(m)_\lambda T_t(n)]\equiv T_t\big(\rho(T_t(m))_\lambda n-\rho(T_t(n))_{-\la-\partial}m+\phi_\lambda (T_t(m),T_t(n))\big) ~~(\mbox{{\rm mod}}~t^2).
\end{eqnarray*}
In this case, we also say that {\bf $\T$ generates an infinitesimal deformation} of $T$.
\end{defi}

It is straightforward to check that $\T$ generates an infinitesimal deformation of $T$ if and only if for $m,n\in M$,  the following condition holds:
\begin{equation}\label{4-5}
\begin{split}
[T(m)_\la \T(n)]+&[\T(m)_\la T(n)]=\T\big(\rho(T(m))_\la n-\rho(T(n))_{-\la-\partial}m+\phi_\la(T(m),T(n))\big)\\
+&T\big(\rho(\T(m))_\la n-\rho(\T(n))_{-\la-\partial} m+\phi_\la(\T(m),T(n))+\phi_\la(T(m),\T(n))\big).
\end{split}
\end{equation}
This is equivalent to say that $\T$  is a $1$-cocycle in the cohomology of $T$, namely, $\mathbf{d}_T(\T)=0.$

\begin{defi}Two infinitesimal deformations $T_t=T+t\T$ and $T^\prime_t=T+t\T^\prime$ of a $\phi$-twisted relative Rota-Baxter operator $T$ are said to be {\bf equivalent} if there exists an element $\bar{a}\in\A/\partial\A$ such that the pair maps $(\chi_t,\psi_t)$ defined by
\begin{align*}
\chi_t(x)=x-t[x_{-\partial}a], \ \ \psi_t(m)=m+tl_{-\partial}(a, m)-t\phi_{-\partial}(T(m),a),
\end{align*}
satisfy
\begin{equation*}\label{morphism2}
\chi_t\circ T_t\equiv T_t^\prime\circ \psi_t,\ \
\rho(\chi_t(a))_\la \psi_t(m)\equiv \psi_t(\rho(a)_\la m),\ \
\psi_t\circ\phi_\la(a,b)\equiv \phi_\la(\chi_t(a),\chi_t(b)),
\end{equation*}
for $a,b\in\A$ and $m\in M$, where $``\equiv"$ means ``equality under modulo $ t^2$".
\end{defi}

Notice that  $(\chi_t,\psi_t)$  is well defined since, if $\partial a\in \partial\A$,  $[x_{-\partial}\partial a]$, $l_{-\partial}(\partial a, m)$ and $\phi_{-\partial}(T(m),\partial a)$  are zero due to conformal sesquilinearity. An infinitesimal deformation $T_t=T+t\T$ of  $T$ is said to be {\bf trivial} if $T_t$ is equivalent to $T$.

Now suppose that $T_t=T+t\T$ and $T^\prime_t=T+t\T^\prime$  are equivalent infinitesimal deformations of $T$. By the condition $\chi_t\circ T_t\equiv T_t^\prime\circ \psi_t~(\mbox{mod}~ t^2)$, we have
\begin{align}
\T(m)-\T^\prime(m)&=[T(m)_{-\partial}a]+T\big(l_{-\partial}(a, m)-\phi_{-\partial}(T(m),a)\big).\label{4-3-6}
\end{align}
By the condition $\rho(\chi_t(x))_\la \psi_t(m)\equiv\psi_t(\rho(x)_\la m)~(\mbox{mod}~ t^2)$, we have
\begin{align}
&\rho(x)_\la \phi_{-\partial}(T(m),a)=\phi_{-\partial}(T(\rho(x)_\la m),a).\label{4-3-3}
\end{align}
By the condition $\psi_t\circ\phi_\la(x,y)\equiv\phi_\la(\chi_t(x),\chi_t(y))~(\mbox{mod}~ t^2)$, we have
\begin{align}
l_{-\partial}(a,\phi_\la(x,y))&=\phi_{-\partial}(T\phi_\la(x,y),a)-\phi_\la(x,[y_{-\partial}a])-\phi_\la([x_{-\partial}a],y).\label{4-3-4}
\end{align}

It follows from \eqref{4-3-6} that
\begin{align}
\T(m)-\T^\prime(m)=(\mathbf{d}_T~ \bar{a})(m), \quad\mbox{for} \ m\in M.
\end{align}
Hence we have the following result:
\begin{theo}
Let $T_t=T+t\T$ and $T^\prime_t=T+t\T^\prime$ be two equivalent infinitesimal deformations of the   $\phi$-twisted relative Rota-Baxter operator $T$. Then $\T$ and $\T'$  are in the same cohomology class  $ H_{T}^1(M,\A) $.
\end{theo}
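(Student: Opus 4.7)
The plan is to produce an explicit $\bar{a} \in \A/\partial\A$ such that $\T - \T' = \mathbf{d}_T \bar{a}$; combined with the fact (already extracted from \eqref{4-5}) that both $\T$ and $\T'$ are $1$-cocycles for $\mathbf{d}_T$, this will show they define the same class in $H_T^1(M,\A)$.

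The candidate $\bar{a}$ is exactly the one supplied by the definition of equivalence. First I would expand the key relation $\chi_t \circ T_t \equiv T_t' \circ \psi_t \pmod{t^2}$ on an element $m \in M$. The constant term yields $T(m) = T(m)$, which is trivial. Collecting the coefficient of $t$ and rearranging gives
\begin{align*}
\T(m) - \T'(m) = [T(m)_{-\partial}a] + T\bigl(l_{-\partial}(a,m) - \phi_{-\partial}(T(m),a)\bigr),
\end{align*}
and the right-hand side is precisely $(\mathbf{d}_T \bar{a})(m)$ as given by formula \eqref{*}. Hence $\T - \T' = \mathbf{d}_T \bar{a}$, and so $[\T] = [\T']$ in $H_T^1(M,\A)$.

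To make sure this conclusion is legitimate, I would then check that $\mathbf{d}_T \bar{a}$ is well defined on the quotient $\A/\partial\A$: substituting $\partial a$ in place of $a$, each of the three terms $[T(m)_{-\partial}(\partial a)]$, $T(l_{-\partial}(\partial a,m))$, and $T(\phi_{-\partial}(T(m),\partial a))$ vanishes, by conformal sesquilinearity of the $\la$-bracket, of $\rho$, and of $\phi \in C^2(\A,M)$ respectively. So the formula descends unambiguously to a map on $\A/\partial\A \to C^1(M,\A)$.

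The remaining two conditions $\rho(\chi_t(a))_\la \psi_t(m) \equiv \psi_t(\rho(a)_\la m)$ and $\psi_t \circ \phi_\la \equiv \phi_\la \circ (\chi_t \otimes \chi_t)$ (which reduce at order $t$ to equations \eqref{4-3-3} and \eqref{4-3-4}) are not used to extract the cohomology class; they simply guarantee that $(\chi_t,\psi_t)$ is actually a morphism of twisted relative Rota-Baxter operators. Thus the real content of the theorem is the coefficient-matching step above, and the proof is essentially a careful unfolding of the equivalence definition at first order in $t$; no genuine obstacle arises beyond this bookkeeping.
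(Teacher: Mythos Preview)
Your proposal is correct and follows essentially the same approach as the paper: the paper also derives \eqref{4-3-6} by matching the $t$-coefficient in $\chi_t\circ T_t\equiv T'_t\circ\psi_t$ and then identifies the right-hand side with $(\mathbf{d}_T\bar{a})(m)$ via \eqref{*}. Your additional remarks on well-definedness of $\mathbf{d}_T\bar{a}$ and on the irrelevance of \eqref{4-3-3}--\eqref{4-3-4} for the cohomology statement are accurate but go slightly beyond what the paper spells out.
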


\begin{defi}{ Let $T$ be a $\phi$-twisted relative Rota-Baxter operator on a module  $(M;\rho)$ over a Lie conformal algebra $\A$. An element $\bar{a}\in \A/\partial \A$ is called a {\bf Nijenhuis element} associated to $T$ if the representative element $a$ satisfies \eqref{4-3-3} and \eqref{4-3-4}.
}\end{defi}

Denote by ${\rm Nij}(T)$ the set of Nijenhuis elements associated to the $\phi$-twisted relative Rota-Baxter operator $T$. It is easy to see from \eqref{4-3-6}--\eqref{4-3-4} that a trivial infinitesimal deformation  of $T$ produces a Nijenhuis element. Notably, the converse is also valid, as the following theorem shows.

\begin{theo}
Let $T$ be a $\phi$-twisted relative Rota-Baxter operator on a module  $(M;\rho)$ over a Lie conformal algebra $\A$. For any $\bar{a}\in {\rm Nij}(T)$, $T_t=T+t\mathfrak{T}$ with $\mathfrak{T}=\mathbf{d}_T(\bar{a})$ is a trivial infinitesimal deformation of $T$.
\end{theo}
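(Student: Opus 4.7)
The plan is to verify two claims: that $T_t = T + t\T$ is actually an infinitesimal deformation, and that it is equivalent to $T$ itself. The first is automatic: since $\T = \mathbf{d}_T(\bar{a})$ and the coboundary operator satisfies $\mathbf{d}_T \circ \mathbf{d}_T = 0$, we have $\mathbf{d}_T(\T) = 0$. As remarked immediately after the definition of infinitesimal deformation, this $1$-cocycle condition is exactly the infinitesimal form \eqref{4-5} of the $\phi$-twisted Rota-Baxter identity, so $T_t$ is indeed an infinitesimal deformation.

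For triviality, with the given representative $a$ of $\bar{a}$, the plan is to use the pair
\[
\chi_t(x) = x - t[x_{-\p}a], \qquad \psi_t(m) = m + t\, l_{-\p}(a, m) - t\, \phi_{-\p}(T(m), a),
\]
as supplied in the definition of equivalence, and verify the three equivalence conditions against $T'_t = T$. The first condition $\chi_t \circ T_t \equiv T \circ \psi_t \pmod{t^2}$ unpacks, at order $t$, to
\[
\T(m) = [T(m)_{-\p}a] + T\big(l_{-\p}(a,m) - \phi_{-\p}(T(m), a)\big) = (\mathbf{d}_T\bar{a})(m),
\]
which holds by the explicit formula \eqref{*} for $\mathbf{d}_T$ on $0$-cochains together with the definition of $\T$. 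The third condition $\psi_t \circ \phi_\la(x,y) \equiv \phi_\la(\chi_t(x), \chi_t(y)) \pmod{t^2}$ reduces, after canceling the common $\phi_\la(x,y)$ term, precisely to \eqref{4-3-4}, which is one of the two defining conditions of a Nijenhuis element.

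The second condition $\rho(\chi_t(x))_\la \psi_t(m) \equiv \psi_t(\rho(x)_\la m) \pmod{t^2}$ requires the most care. Collecting the order-$t$ terms yields the identity
\[
\rho(x)_\la l_{-\p}(a, m) - \rho([x_{-\p}a])_\la m - \rho(x)_\la \phi_{-\p}(T(m),a) = l_{-\p}(a, \rho(x)_\la m) - \phi_{-\p}(T(\rho(x)_\la m), a).
\]
The $\phi$-terms cancel by \eqref{4-3-3} (the other defining condition of a Nijenhuis element), while the $l$-terms are handled by the module axiom \eqref{module}, after substituting the auxiliary spectral parameter at $-\p$. The main obstacle is precisely this bookkeeping of the formal variable $-\p$ in the conformal setting: one must carefully justify that the module identity $\rho(x)_\la \rho(a)_\mu m - \rho(a)_\mu \rho(x)_\la m = \rho([x_\la a])_{\la+\mu} m$ yields the required relation after the substitution $\mu \mapsto -\p$ acting on the intended argument. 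Once this is verified, all three equivalence conditions hold, showing that $T_t$ is equivalent to $T$, and therefore trivial.
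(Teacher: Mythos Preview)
Your proposal is correct and follows essentially the same route as the paper's proof, which simply notes that $\mathbf{d}_T(\T)=0$ gives \eqref{4-5} and that conditions \eqref{4-3-6}--\eqref{4-3-4} are then ``evidently'' satisfied. You are actually more careful than the paper: in unpacking the second equivalence condition $\rho(\chi_t(x))_\la \psi_t(m)\equiv\psi_t(\rho(x)_\la m)$ you correctly observe that beyond \eqref{4-3-3} there are residual $l$-terms, and that their cancellation is a consequence of the module axiom \eqref{module} (indeed, this is why the paper extracts only \eqref{4-3-3} from that condition and treats the rest as automatic).
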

\begin{proof} As $\mathfrak{T}=\mathbf{d}_T (\bar{a})$, $\mathbf{d}_T(\T)=0$ and thus condition \eqref{4-5} is valid. Evidently,  \eqref{4-3-6}--\eqref{4-3-4} are satisfied and therefore $\T $ generates a trivial infinitesimal deformation of $T$.
\end{proof}

\vs{10pt}
\noindent{\bf{ Acknowledgements.}}\ {This research was supported  by the National Key Research and Development Program of China (2022T150109) and the Fundamental Research Funds for the Central Universities (2022FRFK060025, 2412022QD033).}

												\end{document}